\newtheorem{theorem}{Theorem}[section]
\newtheorem{lemma}[theorem]{Lemma}
\newtheorem{proposition}[theorem]{Proposition}
\newtheorem{corollary}[theorem]{Corollary}
\newtheorem{paradox}[theorem]{Paradox}
\theoremstyle{definition}
\newtheorem{definition}[theorem]{Definition}
\newtheorem{question}[theorem]{Question}
\theoremstyle{remark}
\newtheorem{remark}[theorem]{Remark}
\newtheorem{claim*}[theorem]{Claim}
\newcommand{\dom}{\mathrm{dom}}
\newcommand{\otp}{\mathrm{otp}}
\newcommand{\otQ}{\eta}
\newcommand{\soast}[2]{\mathord \{{#1} : {#2}\}}
\newcommand{\bigsoast}[2]{\mathord \big\{#1 \bigm\vert #2\big\}}
\newcommand{\Bigsoast}[2]{\mathord \Big\{#1 \Bigm\vert #2\Big\}}
\newcommand{\soafft}[2]{{}^{#1} #2}
\newcommand{\Bigsetminus}{\mathbin{\Big\backslash}}
\newcommand{\seq}[2]{\langle {#1} : {#2}\rangle}
\newcommand{\cf}{cf.}
\newcommand{\Cf}{Cf.}
\newcommand{\ie}{i.e.}
\newcommand{\card}[1]{\mathord | #1 |}
\newcommand{\pwim}[2]{
#1\big[#2\big]
%#1``#2
}
\newcommand{\ifff}{if and only if}
\newcommand{\dfeq}{:=}
\newcommand{\eqdf}{=:}
\newcommand{\wlg}{w.l.o.g.}
\newcommand{\QQ}{\mathbb{Q}}
\newcommand{\ltl}{<_\mathrm{lex}}
\newcommand{\disj}{\mathrel{\scriptstyle\vee}}
\newcommand{\Pot}{\mathcal{POW}}
\newcommand{\lnth}{\ell}
\newcommand{\doesntarrow}{{\mspace{7mu}\not\mspace{-7mu}\longrightarrow}}
\newcommand{\arrows}{\longrightarrow}
\newcommand{\stick}{{\ensuremath \mspace{2mu}\mid\mspace{-12mu} {\raise0.6em\hbox{$\bullet$}}}}
\newcommand{\stickp}{%\big(
\stick
%\big)
}
\newcommand{\opair}[2]{
\mathord \langle #1, #2\rangle
}
\newcommand{\opin}[3]{
\mathord {{\mathopen ]} {#1} {\mathpunct ,} \ {#2} {\mathclose [}}_{#3}
}
\newcommand{\loin}[2]{
\mathord {{\mathopen ]} {#1} {\mathpunct , } \ {#2} {\mathclose ]}}
}
\newcommand{\roin}[2]{
\mathord {{\mathopen [} {#1} {\mathpunct ,} \ {#2} {\mathclose [}}
}
\newcommand{\iz}{is}
\newcommand{\ou}{ou}
\DeclareMathOperator{\cof}{cof}
\DeclareMathOperator{\cov}{\mathsf{cov}}
\DeclareMathOperator{\add}{\mathsf{add}}
\DeclareMathOperator{\meagre}{\mathcal{M}}
\DeclareMathOperator{\zero}{\mathcal{N}}
\DeclareMathOperator{\MA}{MA}
\DeclareMathOperator{\ZFC}{ZFC}
\begin{document}
\title{Partitioning subsets of generalised scattered orders}
\author{Chris Lambie-Hanson}
\address{Department of Mathematics, Bar-Ilan University\\
  Ramat Gan, 5290002\\
  Isra\"{e}l}
\email{lambiec@macs.biu.ac.il}
\thanks{A portion of this research was undertaken while the first author was a Lady Davis Postdoctoral Fellow and the second author was a postdoctoral fellow at the Ben-Gurion-University of the Negev. The first author would like to thank the Lady Davis Fellowship Trust and the Hebrew University of Jerusalem, as well as Bar-Ilan University and the Israel Science Foundation (grant \#1630/14) for supporting this research. The second author would like to thank the Ben-Gurion University of the Negev and the Israel Science Foundation which supported this research(grant \#1365/14). Both authors wish to thank William Chen, Fred Galvin, Menachem Kojman and Ashutosh Kumar for discussing  proofs, results, or problems of this paper. They also wish to thank the logic and topology seminar of Ben-Gurion University organised by Nadav Meir and Omer Mermelstein for hosting a pair of lectures which triggered this cooperation. Finally they would like to thank Heike Mildenberger who shared her \LaTeX-Code for $\stick$ with them.
}
\author{Thilo Weinert}
\address{Universit\"at Wien\\
Kurt G\"odel Research Centre for Mathematical Logic\\
W\"ahringer Straße 25\\
1090 Wien\\
Autriche}
\email{thilo.weinert@univie.ac.at}
\subjclass[2010]{Primary 03E02, Secondary 03E17, 05C63, 05D10, and 06A05}

\keywords{graph, linear order, partition relation, Ramsey theory, scattered order, stick, unbounding number}

\begin{abstract}
  In 1956, 48 years after Hausdorff provided a comprehensive account on ordered sets and defined
  the notion of a scattered order, Erd\H{o}s and Rado  founded the partition calculus in a seminal paper.
    The present paper gives an account of investigations into
   %connections between
  general\iz ations of scattered linear orders and their partition relations for both singletons and pairs. We consider analogues for these order-types of  known partition theorems for ordinals or scattered orders and prove a partition theorem from assumptions about cardinal characteristics.
%  We provide analogues of the Milner-Rado paradox for these orders instead of ordinals. For infinite, regular $\kappa$,
%  we investigate the extent to which the classes of $\kappa$-scattered,
%  weakly $\kappa$-scattered, and $\kappa$-saturated linear orders of size
%  $\kappa$ are closed under the partition relation $\tau \arrows
%  (\varphi, n)^2$ for all $n < \omega$.
  %This grew out of asking this question left open in Rosenstein's otherwise comprehensive account on linear orders from 1982 even for the class of countable scattered linear orders.
  %We prove that for a regular cardinal $\kappa$ such that $\stick(\kappa) = \mathfrak{b}_\kappa = \kappa^+$, the partition relation $\kappa^+\kappa \arrows (\kappa^+\kappa, 3)^2$ fails.
%  Furthermore we show that the unbounding number being small together with the stick-principle refutes a certain partition relation.
%  Finally we general\iz e a result of Komj\'{a}th
%  and Shelah about partitions of scattered linear orders to a similar result about
%  partitions of $\kappa$-scattered linear orders for uncountable $\kappa$.
  Together, this continues older research by Erd\H{o}s, Galvin, Hajnal, Larson and Takahashi and more recent investigations by
  Abraham, Bonnet, Cummings, D\v{z}amonja, Komj\'{a}th, Shelah and Thompson.
\end{abstract}
\maketitle

\section{Introduction}

In this paper, we study partition relations in the context of scattered linear orders and
their general\iz ations. Recall that a linear order is \emph{scattered} if it does not embed
a copy of the rationals and is \emph{$\sigma$-scattered} if it is a countable
union of scattered linear orders. The classes of scattered and $\sigma$-scattered linear orders
are, in a sense, the simplest classes of linear orders past the class of well-orders, and, as such,
they have played a central role in the study of general linear orders.

One of the central questions motivating the investigations of this paper concerns the extent to which
the classes of scattered and $\sigma$-scattered linear orders behave similarly to the class of well-orders.
Two seminal results contributing to the understanding of this issue are the following:
\begin{itemize}
  \item In \cite{908H0}, Hausdorff character\iz es the class of scattered linear orders as the
    smallest class containing all well-orders and anti-well-orders and closed under well-ordered
    and anti-well-ordered lexicographic sums.
  \item In \cite{971L0}, Laver proves Fra\"iss\'e's Conjecture by showing that the class of $\sigma$-scattered
    linear orders is well-quasi-ordered by embeddability.
\end{itemize}

We will be interested not just in scattered linear orders but also in general\iz ations of scattered linear
orders introduced by D\v{z}amonja and Thompson in \cite{006DT0}. We first recall some relevant definitions.
For unfamiliar notation, see the Notation subsection at the end of the Introduction.

\begin{definition}
  Suppose $\kappa$ is an infinite cardinal, $\varphi$ is a linear order type, and $P$ is a linear order of type $\varphi$.
  \begin{enumerate}
    \item $\varphi$ is \emph{$\kappa$-dense} if $\card{P} > 1$ and, for all $a,b \in P$, if $a <_P b$, then
    $\card{\opin{a}{b}{P}} \geqslant \kappa$.
    \item $\varphi$ is \emph{$\kappa$-saturated} if $P \not= \emptyset$ and, for all $A,B \subseteq P$ such that $\card{A}, \card{B} < \kappa$
      and $A <_P B$, there is $c \in P$ such that $A <_P c <_P B$.
  \end{enumerate}
\end{definition}

In \cite{908H0, 914H0} Hausdorff shows that, if $\kappa$ is an infinite cardinal, the following hold:
\begin{itemize}
  \item If $P$ is a $\kappa$-saturated linear order, then every linear order of cardinality $\kappa$ embeds into $P$.
  \item There is a $\kappa^+$-saturated linear order $L$ of cardinality $2^\kappa$ such that:
  \begin{itemize}
  \item $L$ is embeddable into every $\kappa^{+}$-saturated linear order;
  \item  no suborder of $L$ of cardinality $\kappa^{++}$ is well-ordered or anti-well-ordered.
  \end{itemize}
  \item If $\kappa$ is singular, then every $\kappa$-saturated order is also $\kappa^+$-saturated.
\end{itemize}

In \cite{949S0}, Sierpi\'{n}ski defines, for every infinite cardinal $\kappa$, a linear order of size $\kappa^{<\kappa}$,
which he calls $\QQ_{\kappa}$. In addition, he shows that, for every $\kappa$, $\QQ_{\kappa^+}$ is $\kappa^+$-saturated.
In \cite{956G0}, Gillman shows:
\begin{enumerate}
  \item for every infinite cardinal $\kappa$, $\QQ_\kappa$ is embeddable into every $\kappa$-saturated linear order;
  \item for every limit cardinal $\kappa$, both that every $\kappa$-sized linear order embeds into $\QQ_\kappa$ and that it is $\kappa$-saturated \ifff\
    $\kappa$ is inaccessible.
\end{enumerate}

In \cite{006DT0} D\v{z}amonja and Thompson, building on previous work of Abraham and Bonnet, introduce the following definition.

\begin{definition}[D\v{z}amonja-Thompson, \cite{006DT0}] \label{gen_scattered_def}
  Suppose $\kappa$ is an infinite, regular cardinal, and $\varphi$ is a linear order type.
  \begin{enumerate}
    \item $\varphi$ is \emph{$\kappa$-scattered} if there is no $\kappa$-dense order type $\tau$ such that $\tau \leqslant \varphi$.
    \item $\varphi$ is \emph{weakly $\kappa$-scattered} if there is no $\kappa$-saturated $\tau$ such that $\tau \leqslant \varphi$.
  \end{enumerate}
\end{definition}

Note that, for $\kappa = \aleph_0$, the classes of $\kappa$-scattered and weakly $\kappa$-scattered linear orders
coincide and are equal to the class of scattered linear orders. For uncountable $\kappa$, the classes are provably
different, i.e., there is a weakly $\kappa$-scattered linear order that is not $\kappa$-scattered.

In \cite{012ABCDT0}, Abraham et al.\ prove a general\iz ation of Hausdorff's structure theorem for the class
of $\kappa$-scattered linear orders.

\begin{definition}
  Suppose $\kappa$ is an infinite cardinal. Then $\mathcal{BL}_\kappa$ denotes the class of all linear
  order types $\varphi$ such that either $\card{\varphi} < \kappa$, $\varphi$ is a well-ordering, or $\varphi$ is
  an anti-well-ordering.
\end{definition}

\begin{theorem}[{\cite[Theorem 3.10]{012ABCDT0}}] \label{hausdorff_structure_thm}
  Let $\kappa$ be an infinite cardinal. The class of $\kappa$-scattered linear order types is
  the smallest class of linear order types containing $\mathcal{BL}_\kappa$ and closed under lexicographic
  sums with index set in $\mathcal{BL}_\kappa$.
\end{theorem}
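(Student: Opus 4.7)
The plan is to prove both inclusions of the claimed equality. Let $\mathcal{H}$ denote the smallest class of linear order types containing $\mathcal{BL}_\kappa$ and closed under lexicographic sums with index set in $\mathcal{BL}_\kappa$; the goal is to show that $\mathcal{H}$ coincides with the class of $\kappa$-scattered linear order types.

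For the inclusion that every $\varphi \in \mathcal{H}$ is $\kappa$-scattered, I proceed by induction on the construction of $\mathcal{H}$. Every $\varphi \in \mathcal{BL}_\kappa$ is $\kappa$-scattered: a $\kappa$-dense order has cardinality $\geqslant \kappa$ and embeds a copy of $\QQ$ (build a dyadic tree inside any open interval and apply Cantor's theorem), so it does not embed into any element of $\mathcal{BL}_\kappa$. For the inductive step, if $\varphi = \sum_{i \in I} \varphi_i$ with $I \in \mathcal{BL}_\kappa$ and each $\varphi_i$ is $\kappa$-scattered, and if a $\kappa$-dense $\tau$ embeds into $\varphi$, then the preimage in $\tau$ of each $i \in I$ is convex and embeds into $\varphi_i$, hence is a singleton (otherwise, as a convex subset of $\tau$ of size at least two, it would itself be $\kappa$-dense, contradicting the $\kappa$-scatteredness of $\varphi_i$); so $\tau$ embeds into $I \in \mathcal{BL}_\kappa$, again impossible.

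For the converse, fix a $\kappa$-scattered $\varphi$ and define a relation $\sim$ on $\varphi$ by $a \sim b$ \ifff\ $[\min(a,b),\max(a,b)] \in \mathcal{H}$. Reflexivity and symmetry are clear; transitivity, for $a \leqslant b \leqslant c$ with $[a,b], [b,c] \in \mathcal{H}$, uses the decomposition $[a,c] = [a,b] \cup (b,c]$ as a two-term lex sum, which requires $(b,c] \in \mathcal{H}$. This is the \emph{sub-interval closure} of $\mathcal{H}$, which I would prove as a separate lemma by induction on the construction of $\mathcal{H}$ (base case: $\mathcal{BL}_\kappa$ is closed under sub-intervals; inductive case: a sub-interval of a lex sum decomposes as a lex sum of sub-intervals over a sub-interval of the index). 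Thus $\sim$ is an equivalence relation with convex classes, and the quotient $Q \dfeq \varphi / \sim$ embeds into $\varphi$ via any system of representatives. One then observes that if $\card{Q} > 1$ then $Q$ is $\kappa$-dense: given $[a] <_Q [b]$ with strictly fewer than $\kappa$ classes between them, the closed interval in $Q$ between $[a]$ and $[b]$ has size $< \kappa$ and so lies in $\mathcal{BL}_\kappa$, and the corresponding interval $[a,b] \subseteq \varphi$ is a lex sum of $\sim$-classes indexed by an element of $\mathcal{BL}_\kappa$. Granted that each $\sim$-class lies in $\mathcal{H}$, this forces $[a,b] \in \mathcal{H}$ and $a \sim b$, a contradiction; since $Q$ embeds into the $\kappa$-scattered $\varphi$ we conclude $\card{Q} \leqslant 1$ and hence $\varphi$ is itself a single $\sim$-class in $\mathcal{H}$.

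The principal obstacle is thus the claim that every $\sim$-class $C$ belongs to $\mathcal{H}$, as the definition of $\sim$ only directly yields this for bounded intervals with both endpoints in $C$. Bounded classes are immediate. For an unbounded $C$, I would use the cofinality and coinitiality of $C$ to decompose it as a lex sum, indexed by an order in $\mathcal{BL}_\kappa$ built from well-orders and anti-well-orders, of bounded pieces each with endpoints in $C$ and hence in $\mathcal{H}$ by sub-interval closure---in the spirit of Hausdorff's classical transfinite recursion. Handling the limit stages of this decomposition, so that each limit step contributes a piece still amenable to the inductive treatment, is the technical heart of the argument; it is the direct $\kappa$-scattered analogue of the classical $\kappa=\omega$ case and relies on the $\kappa$-scatteredness of $\varphi$ to ensure that the cofinal and coinitial sequences can always be refined within $C$ without a $\kappa$-dense piece appearing.
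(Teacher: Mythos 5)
The paper does not prove this theorem at all: it is imported verbatim from Abraham--Bonnet--Cummings--D\v{z}amonja--Thompson \cite[Theorem 3.10]{012ABCDT0}, so there is no in-paper argument to compare yours against. Judged on its own terms, your proposal is the natural $\kappa$-analogue of Hausdorff's classical equivalence-relation proof and its overall structure is sound. The easy inclusion is handled correctly: the key observations that a $\kappa$-dense order embeds $\QQ$ and hence avoids $\mathcal{BL}_\kappa$, and that a convex subset of a $\kappa$-dense order with at least two points is again $\kappa$-dense, are exactly what is needed for the induction along the hierarchy $\mathcal{L}_\alpha$. For the converse, the relation $a \sim b$ \ifff\ $[\min(a,b),\max(a,b)] \in \mathcal{H}$, the sub-interval (indeed, convex-subset) closure lemma for $\mathcal{H}$ proved by induction on the construction, and the argument that a nontrivial quotient $\varphi/\!\sim$ would be $\kappa$-dense all go through; note that the $\kappa$-density claim needs the interval $[[a],[b]]$ of the quotient to have size $<\kappa$, which holds because $\kappa$ is infinite.

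The one place where your diagnosis is off is the step you defer as the ``technical heart'': showing that an unbounded $\sim$-class $C$ lies in $\mathcal{H}$. This requires neither any refinement of cofinal sequences nor the $\kappa$-scatteredness of $\varphi$; it follows purely from the closure properties of $\mathcal{H}$ already established. Fix $c_0 \in C$, let $\seq{c_\xi}{\xi < \lambda}$ be increasing and cofinal in $C \cap [c_0, \to)$, and for $x$ in that set put $\rho(x) \dfeq \min\soast{\xi < \lambda}{x \leqslant c_\xi}$. The fibres of $\rho$ are convex, ordered like a suborder of the well-order $\lambda$, and each fibre is a convex subset of $[c_0, c_\xi]$, which is in $\mathcal{H}$ since $c_0 \sim c_\xi$; so $C \cap [c_0,\to)$ is a lexicographic sum over a well-order of members of $\mathcal{H}$, and there is no separate difficulty at limit $\xi$ (the fibre there is simply $\bigsoast{x}{x \leqslant c_\xi \wedge \forall \eta < \xi\, (c_\eta < x)}$, still a convex subset of $[c_0,c_\xi]$). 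The coinitial half is symmetric, and $C$ is the two-term sum of the two halves. With that step made explicit, your argument is complete.
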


The results in this paper concern the behavior of general\iz ed scattered orders with respect to partition relations.
The following folklore result provides some motivation for our investigations.
The proof uses a pair-col\ou ring first deployed by Sierpi\'{n}ski, \cf\ \cite{933S0}.

\begin{theorem}[Folklore] \label{sierpinski_thm}
  If $\tau$ is a linear order type, then $\tau \doesntarrow (\omega, \omega^*)^2$.
\end{theorem}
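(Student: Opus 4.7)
My plan is to use the classical Sierpi\'{n}ski pair-col\ou ring alluded to in the paragraph preceding the statement. Fix any linear order $P$ of type $\tau$ and, by choice, a well-ordering $\prec$ of its underlying set. The guiding idea is to compare the given order $<_P$ with $\prec$: because $\prec$ admits no infinite descending chain, any col\ou ring that forces a forbidden monochromatic subtype to unwind into a $\prec$-descent cannot realise such a monochromatic set.

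Concretely, for $\{x,y\} \in [P]^2$ I would set
\[
c(\{x,y\}) = \begin{cases} 0 & \text{if $<_P$ and $\prec$ disagree on $\{x,y\}$,} \\ 1 & \text{if $<_P$ and $\prec$ agree on $\{x,y\}$.} \end{cases}
\]
Equivalently, whenever $x <_P y$ one has $c(\{x,y\}) = 0$ exactly when $y \prec x$.

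Next I verify the two prohibitions. Suppose $A = \{a_n : n < \omega\} \subseteq P$ has $<_P$-type $\omega$, so $a_0 <_P a_1 <_P \dotsb$, and is $0$-homogeneous; then each pair disagrees, whence $a_{n+1} \prec a_n$ for every $n$, giving an infinite $\prec$-descending sequence, which is impossible. Symmetrically, suppose $B = \{b_n : n < \omega\} \subseteq P$ has $<_P$-type $\omega^*$, so $\dotsb <_P b_2 <_P b_1 <_P b_0$, and is $1$-homogeneous; then agreement on each pair yields $b_{n+1} \prec b_n$ for every $n$, again an infinite $\prec$-descending sequence. Hence $c$ witnesses $\tau \doesntarrow (\omega, \omega^*)^2$.

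There is no real obstacle: the argument uses nothing beyond the well-foundedness of an arbitrary well-ordering of $P$. The only delicate point is choosing the col\ou ring convention so that both forbidden monochromatic subtypes -- an increasing $\omega$-chain in col\ou r $0$ and a decreasing $\omega^*$-chain in col\ou r $1$ -- simultaneously translate into $\prec$-descending sequences.
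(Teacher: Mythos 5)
Your proof is correct and is exactly the classical Sierpi\'{n}ski colouring that the paper alludes to (comparing the given order with an auxiliary well-ordering and colouring pairs by agreement versus disagreement); the paper itself states the result as folklore without supplying details, and your verification of both homogeneity prohibitions via the well-foundedness of $\prec$ is complete.
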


Theorem \ref{sierpinski_thm} implies that, in any consistent positive partition relation
of the form $\tau \arrows (\varphi_0, \varphi_1)^2$, where all variables indicate linear order types,
if $\varphi_0$ and $\varphi_1$ are infinite, then either both must be well-orders or both must be anti-well-orders.
Since we want to work with classes of linear orders larger than the class of well-orders, this
indicates that we should look at partition relations of the form $\tau \arrows (\varphi, n)^2$,
where $n$ is a natural number. Note that, if $\kappa$ is an infinite cardinal and $\alpha < \kappa^+$,
then there is $\beta < \kappa^+$ such that, for all $n < \omega$, we have $\beta \arrows (\alpha, n)^2$, \cf\ \cite{972EM0}. Our goal of comparing the classes of general\iz ed scattered orders with the class of well-orders
thus leads us naturally to the following general question.

\begin{question}
  Suppose $\kappa$ is an infinite cardinal and $\varphi$ is a $\kappa$-scattered (resp.\ weakly $\kappa$-scattered)
  linear order type of size $\kappa$. Must there be a $\kappa$-scattered (resp.\ weakly $\kappa$-scattered) $\tau$
  of size $\kappa$ such that, for all $n < \omega$, $\tau \arrows (\varphi, n)^2$?
\end{question}

In most of the existing literature concerning partition relations and linear orders, the linear orders under
consideration are in fact well-orders. Let us mention the exceptions we are aware of: Erd\H{o}s and Rado
prove in \cite[Theorem 6]{956ER0} that $\otQ \arrows (\otQ, \aleph_0)^2$. Erd\H{o}s and Hajnal
prove in \cite[Corollary 1]{962EH0} that for any countable scattered linear order type $\tau$ we have
that $\tau \arrows (\varphi, \aleph_0)^2$ implies $\varphi \in \soast{n, n + \omega^*,
\omega + n}{n < \omega}$. Larson proves in \cite[Theorem 4.1]{973L0} that
${(\omega\omega^*)}^\omega \arrows \big({(\omega\omega^*)}^\omega, n\big)^2$ for all natural
numbers $n$; she also shows in \cite[Theorem 4.3]{973L0} that ${(\omega\omega^*)}^{kn} \arrows
\big((\omega\omega^*)^k, n\big)^2$ for all natural numbers $k$ and $n$.
%\begin{definition} \label{definition : order-type}
\begin{remark}
\label{remark : (alpha*alpha^*)^omega}
Recall that ${(\alpha\alpha^*)}^\omega$ can be defined as the order type of the set $S_\alpha \dfeq \alpha^{<\omega}$ of finite sequences of elements of $\alpha$, ordered by $\prec_\alpha$. In order to define $\prec_\alpha$ let, for $s, t \in S_\alpha$, $\delta(s, t) \dfeq \min\soast{n < \omega}{n \in \dom(s) \triangle \dom(t) \vee s(n) \ne t(n)}$.

For $s, t \in S_\alpha$ let $s \prec_\alpha t \Leftrightarrow \begin{cases}
\delta(s, t) \text{ is even and } (t \sqsubset s \text{ or } t(\delta(s, t)) < s(\delta(s, t))) \text{ or}\\
\delta(s, t) \text{ is odd and } (s \sqsubset t \text{ or } s(\delta(s, t)) < t(\delta(s, t))).
\end{cases}$
%\end{definition}
%Clearly, $\otp(S_\kappa \upharpoonright n) = (\kappa\kappa^*)^n$ for all natural numbers $n$.
\end{remark}
Regarding this order type, \cf\ Corollaries \ref{corollary : milner_rado_generalization} and \ref{corollary : second omega_1_negative_scattered}
where it appears as a factor of some order types in negative partition relations.
Also \cf\ Questions \ref{strong_weakly_scattered_qstn} and \ref{strong_scattered_qstn}.

\smallskip

The paper is structured as follows. In Section \ref{def_section}, we prove some basic results concerning
general\iz ed scattered orders that will be used throughout the paper. %In Section \ref{sat_section},
Furthermore, we show that, if $\tau$ is a $\kappa$-saturated linear order type and $\varphi$ is a linear
order type of size $\kappa$, then $\tau \arrows (\varphi, n)^2$ for all $n < \omega$. In Section
\ref{weakly_scattered_section}, we show that, if $\kappa^{<\kappa} = \kappa$ and $\varphi$ is a weakly
$\kappa$-scattered linear order type of size
$\kappa$, then there is a weakly $\kappa$-scattered linear order type $\tau$ of size $\kappa$ such that
$\tau \arrows (\varphi, n)^2$ for all $n < \omega$. \footnote{For $\kappa = \aleph_0$ this result is probably due to Galvin but has never been published, \cf\ \cite{014G0, 015S1}.
Its noticable absence from \cite{982R0} led the second author to its rediscovery. Subsequently the first author general\iz ed it to its current form.}

In Section \ref{cardinal_characteristics}, we prove that, for a regular cardinal $\kappa$, the negative partition relation
$\kappa^+\kappa \doesntarrow (\kappa^+\kappa, 3)^2$ follows from $\mathfrak{b}_\kappa = \stick(\kappa) = \kappa^+$.
This complements a result of Larson indicating that the same negative relation follows from $\mathfrak{d} = \aleph_1$.
In Section \ref{scattered_section}, we use this negative partition relation to show that, if $\mathfrak{b} = \stick = \aleph_1$
or $\mathfrak{d} = \aleph_1$, then there is an $\aleph_1$-scattered linear order type $\varphi$ of size $\aleph_1$ such that,
for every $\aleph_1$-scattered linear order type $\tau$ of size $\aleph_1$, $\tau \doesntarrow (\varphi, 3)^2$.
We show, moreover, that if $\kappa > \omega$ is a regular cardinal, it is consistent that $\kappa^{<\kappa} = \kappa$ and that
there is a $\kappa$-scattered order type $\varphi$ of size $\kappa$ such that, for all $\kappa$-scattered
order types $\tau$ of size $\kappa$, $\tau \doesntarrow (\varphi, 3)^2$. Finally, in Section
\ref{ks_section}, we general\iz e a result of Komjath and Shelah from \cite{003KS0}. We prove that,
if $\kappa^{<\kappa} = \kappa$, $\varphi$ is a $\kappa$-scattered linear order type, and $\nu$ is a cardinal,
then there is a $\kappa$-scattered linear order type $\tau$ such that $\tau \arrows [\varphi]^1_{\nu, \kappa}$.

\subsection{Notation}
We use Greek minuscules $\rho, \tau, \varphi$ and $\psi$ to refer to order types of linear orders.
In particular, $\omega$ refers to the order type of the natural numbers and $\otQ$ to that of the
rational numbers. The Greek letters $\kappa, \mu$ and $\nu$ refer to cardinals. All other Greek letters
used refer to ordinals. The only exception is $\sigma$ which appears in the notion of
$\sigma$-scatteredness.
We will use Roman capitals to refer to actual ordered sets, $\opair{P}{<_P}$, for instance.
%In a slight abuse of notation, we will also use these letters to refer to linear orders of the denoted order type. More precisely, $\varphi$ will refer to the underlying set of the order and $<_\varphi$ will refer to the order. If there is no possibility of confusion, $<_\varphi$ will sometimes simply be denoted by $<$.
If $A, B \subseteq P$ (with one or both possibly empty),
then $A <_P B$ means that,
for all $a \in A$ and $b \in B$, $a <_P b$. If $A \subseteq P$ and $b \in P$, then
$A <_P b$ and $b <_P A$ have the obvious meanings. If $P$ is a linear order of order type
$\varphi$, we will denote this by $\mathrm{otp}(P) = \varphi$. We will sometimes abuse notation and
write $\card{\varphi}$ to denote $\card{P}$, where $P$ is an order of type $\varphi$. If $\varphi$ and $\tau$ are linear order types, we will
write $\varphi \leqslant \tau$ to mean that there are linear orders $P$ and $Q$ of type $\varphi$ and $\tau$,
respectively, such that $P$ is a suborder of $Q$.

If $a, b \in P$ and $a <_P b$, then $\opin{a}{b}{P}$
denotes the open interval $\soast{c \in \varphi}{a <_P c <_P b}$ and $[a,b]_P$ denotes the closed
interval $\soast{c \in P}{a \leqslant_\varphi c \leqslant_\varphi b}$. $\roin{a}{b}_P$ and $\loin{a}{b}_P$ are given
the obvious meanings. If $s$ is a sequence, then $\lnth(s)$ denotes its length. If $\varphi$ is an order type, then $\varphi^*$ denotes the reverse of $\varphi$. If
$\varphi$ is a linear order type and, for all $a \in \varphi$, $\tau_a$ is a linear order type, then the lexicographic sum $\sum_{a \in \varphi}
\tau_a$ is the type of the linear order consisting of pairs in $X \dfeq \soast{\opair{a}{b}}{a \in P \wedge b \in T_a}$ where $\opair{P}{<_P}$
is an order of type $\varphi$ and, for every $a \in P$, the pair $\opair{T_a}{<_{T_a}}$ is an order of type $\tau_a$.
If $\opair{a_0}{b_0}, \opair{a_1}{b_1} \in X$ are of this kind, then $\opair{a_0}{b_0} <_X \opair{a_1}{b_1}$ iff
$a_0 <_P a_1$ or $(a_0 = a_1$ and $b_0 <_{ T_{a_0}} b_1)$. If there is a linear order type $\tau$ such that $\tau_a = \tau$ for all $a \in \varphi$, then
we may denote the lexicographic sum $\sum_{a \in \varphi} \tau$ as $\tau\varphi$. %Another way to define the product type $\tau\varphi$ is by letting it be the type of lexicographically ordered set of pairs in $P \times T$ for an order $P$ of type $\varphi$ and an order $T$ of type $\tau$. %This generalises to products of any ordinal length.
%\todo[inline]{Is there a general definition?}

If $\beta$ is an ordinal and $f \neq g$ are functions with $\dom(f) = \dom(g) = \beta$,
then $\Delta(f,g)$ is the least $\alpha < \beta$ such that $f(\alpha) \neq g(\alpha)$. For every sequence of sets $\seq{X_\alpha}{\alpha < \beta}$ we denote the family of all functions having domain $\beta$ which, for all $\alpha < \beta$, satisfy $f(\alpha) \in X_\alpha$, by $\bigtimes_{\alpha < \beta} X_\alpha$.
If $\beta$ is an ordinal and, for all $\alpha < \beta$, $\varphi_\alpha$ is an order type,
then $\prod_{\alpha < \beta} \varphi_\alpha$ is given by the lexicographic ordering on $\bigtimes_{\alpha < \beta} P_\alpha$ where $P_\alpha$ has type $\varphi_\alpha$ for every $\alpha < \beta$. More precisely,
if $f,g \in \bigtimes_{\alpha < \beta} P_\alpha$, then $f \ltl g$ iff
$f(\Delta(f,g)) <_{P_{\Delta(f,g)}} g(\Delta(f,g))$. If there is $\varphi$ such that
$\varphi_\alpha = \varphi$ for all $\alpha < \beta$, we will often write $\soafft{\beta}{\varphi}$
instead of $\prod_{\alpha < \beta} \varphi_\alpha$.
Finally, for every cardinal $\kappa$, let $\QQ_\kappa$ denote the lexicographic ordering of the set of sequences $s$ of zeros and ones of
length $\kappa$ for which there is a largest $\alpha < \kappa$ such that $s(\alpha) = 1$. \Cf\ \cite{913S0, 005H0} for discussions of order types and their arithmetic.

For an ordered set $X$ and an order type $\tau$ the expression $[X]^\tau$ denotes the set of all subsets of $X$ having order type $\tau$. If $Y$ is any set and $\kappa$ is a cardinal then $[X]^\kappa$ denotes the collection of all subsets of $X$ having cardinality $\kappa$.

If $\varphi$ and $\tau$ are linear order types and $\nu$, $\mu$ are cardinals, then $\tau \arrows (\varphi)^\nu_\mu$ holds
if, whenever $P$ has order type $\tau$ and $f:[P]^\nu \rightarrow \mu$, there is a suborder $Q$ of $P$ such that $Q$ has order type
$\varphi$ and $f \restriction [Q]^\nu$ is constant. If $n$ is a natural number, then $\tau \arrows (\varphi_0, \dots, \varphi_n)^\nu$
holds if, whenever $P$ has order type $\tau$ and
$f: [P]^\nu \rightarrow n+1$, there is a suborder $Q$ of $P$ and some $i \leqslant n$ such that $Q$ has order type
$\varphi_i$ and $f \restriction [Q]^\nu$ is constant with value $i$. If $n$ is a natural number and, for all $i \leqslant n$,
$k_i$ is a natural number, then
\begin{align*}
\tau \arrows (\varphi_{0,0} \disj \dots \disj \varphi_{0,k_0}, \dots, \varphi_{n, 0} \disj \dots \disj \varphi_{n, k_n})^\nu
\end{align*}
holds if, whenever $P$ has order type $\tau$ and $f: [P]^\nu \rightarrow n+1$ there is an $i \leqslant n$, an $m \leqslant k_i$
and a suborder $Q$ of $P$ such that $Q$ has order type $\varphi_{i, m}$ and $f \restriction [Q]^\nu$ is constant with value $i$.
In all cases of interest to us, $\nu$ will be finite.

We will also be discussing square-bracket partition relations, which we recall here.  Suppose $\varphi$ and $\psi$ are linear orders,
$1 \leqslant n < \omega$, and $\mu \leqslant \nu$ are cardinals.
Then $\psi \arrows [\varphi]^n_{\nu, < \mu}$ is the assertion that, whenever $P$ has order type $\psi$ and
$f:[P]^n \rightarrow \nu$, there is $A \in [\nu]^{<\mu}$ and a suborder $Q$ of $P$ such that
$Q$ has order type $\varphi$ and $\pwim{f}{[Q]^n} \subseteq A$. If $\kappa$ is a cardinal,
$\psi \arrows [\varphi]^n_{\nu, < \kappa^+}$ is typically written as $\psi \arrows
[\varphi]^n_{\nu, \kappa}$.

This style of notation for partition problems was first introduced in \cite{956ER0}; both the square-bracket partition relation and the partition relation with alternatives were introduced in \cite{965EHR0}. The latter is was not that widely considered up to now as it is superfluous whenever all the entries are ordinals, which is still the most widespread version. To our knowledge, after \cite{965EHR0, 971EMR0, 017LSW0} this is only the fourth paper in which this relation is considered.

\section{Preliminaries} \label{def_section}

To state our results in their full generality, we will be interested in the following notion, which is
slightly finer than that given by Definition \ref{gen_scattered_def}.

\begin{definition}
  Suppose $\kappa$ is an infinite, regular cardinal, $\mu$ is an infinite cardinal,
  $\varphi$ is a linear order type, and $P$ is an order of type $\varphi$. $\varphi$ is \emph{$\opair{\kappa}{\mu}$-scattered}
  (resp.\ \emph{weakly $\opair{\kappa}{\mu}$-scattered}) if there is
  $\nu < \mu$ and a sequence of suborders
  $\langle P_\zeta \mid \zeta < \nu \rangle$ of $P$ such that $\otp(P_\zeta)$ is $\kappa$-scattered
  (resp.\ weakly $\kappa$-scattered) for all $\zeta < \nu$ and $\bigcup_{\zeta < \nu} P_\zeta = P$.
\end{definition}

We start by investigating the relationships between these different classes of linear order types.
Suppose $\kappa$ is an infinite cardinal.
The character\iz ation of the class of $\kappa$-scattered linear order types given by Theorem \ref{hausdorff_structure_thm}
allows one to prove results about $\kappa$-scattered linear order types by
induction on the ``complexity" of the type. More precisely, let $\mathcal{L}$ be the class of $\kappa$-scattered
linear order types. Define $\mathcal{L}_\alpha$ inductively for ordinals $\alpha$ as follows.
$\mathcal{L}_0 = \mathcal{BL}_\kappa$. If $\beta$ is a limit ordinal, then $\mathcal{L}_\beta = \bigcup_{\alpha < \beta}
\mathcal{L}_\alpha$. If $\mathcal{L}_\alpha$ has been defined, then $\mathcal{L}_{\alpha + 1}$ is the class of all
lexicographic sums $\sum_{a \in \tau} \varphi_a$, where $\tau \in \mathcal{BL}_\kappa$ and, for all $a \in \tau$,
$\varphi_a \in \mathcal{L}_\alpha$. Then $\bigcup_{\alpha \in \mathrm{On}} \mathcal{L}_\alpha = \mathcal{L}$.

We use this method here to relate the classes of $\kappa$-scattered and $\opair{\aleph_0}{\kappa}$-scattered linear order types
when $\kappa$ is a successor cardinal.

\begin{proposition}
  Suppose $\kappa = \nu^+$ and $\varphi$ is a $\kappa$-scattered linear order type. Then
  $\varphi$ is $\opair{\aleph_0}{\kappa}$-scattered.
\end{proposition}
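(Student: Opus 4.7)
The plan is to prove the proposition by induction on the $\mathcal{L}_\alpha$ hierarchy arising from Theorem \ref{hausdorff_structure_thm} applied to $\kappa$. Throughout, let $P$ be an order of type $\varphi$, and note that $\nu$ is infinite, so $\nu \cdot \nu = \nu < \kappa$.

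For the base case, $\varphi \in \mathcal{BL}_\kappa$, meaning either $\card{\varphi} < \kappa$, in which case $\card{P} \leqslant \nu$ and $P$ is the union of its $\leqslant \nu$ singleton suborders, each trivially scattered; or $\varphi$ is a well-order or anti-well-order, in which case $\varphi$ is itself scattered, and the trivial cover $\seq{P}{\zeta < 1}$ witnesses $\opair{\aleph_0}{\kappa}$-scatteredness.

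For the inductive step, suppose $\varphi = \sum_{a \in \tau} \varphi_a$ with $\tau \in \mathcal{BL}_\kappa$ and each $\varphi_a$ already known to be $\opair{\aleph_0}{\kappa}$-scattered. Writing $P$ as the concatenation $\sum_{a \in \tau} P_a$, I would fix, for each $a \in \tau$, a sequence $\seq{P_{a,\zeta}}{\zeta < \nu}$ of scattered suborders of $P_a$ with union $P_a$, padding by the empty suborder if necessary to standardise the length to $\nu$. Now split into two cases depending on $\tau$. If $\card{\tau} < \kappa$, that is, $\card{\tau} \leqslant \nu$, then the family $\soast{P_{a,\zeta}}{a \in \tau, \ \zeta < \nu}$ has cardinality at most $\nu \cdot \nu = \nu < \kappa$, and each member is scattered; after re-indexing by an ordinal below $\kappa$, this is the required witness. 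If instead $\tau$ is a well-order or anti-well-order, then for each $\zeta < \nu$ set $Q_\zeta \dfeq \bigcup_{a \in \tau} P_{a,\zeta}$; because $P_a <_P P_b$ whenever $a <_\tau b$, the order type of $Q_\zeta$ equals $\sum_{a \in \tau} \mathrm{otp}(P_{a,\zeta})$, a well-ordered (resp.\ anti-well-ordered) sum of scattered orders. By Hausdorff's classical theorem (or Theorem \ref{hausdorff_structure_thm} applied with $\kappa = \aleph_0$), this sum is itself scattered. Then $\seq{Q_\zeta}{\zeta < \nu}$ is a cover of $P$ by scattered suborders of length $\nu < \kappa$.

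The whole argument is essentially bookkeeping; the only non-trivial ingredient is the closure of the class of scattered orders under well-ordered and anti-well-ordered sums, which is exactly Hausdorff's structure theorem specialised to $\aleph_0$. The successor hypothesis $\kappa = \nu^+$ enters only to guarantee that any set of cardinality $<\kappa$ has cardinality $\leqslant \nu$, so that the two Cartesian-product-type counts in the two subcases both come out $\leqslant \nu < \kappa$ and produce a legal witness to $\opair{\aleph_0}{\kappa}$-scatteredness.
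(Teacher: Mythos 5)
Your proof is correct and follows essentially the same route as the paper's: induction on the Hausdorff complexity, with the same two subcases in the inductive step (a small index set giving a $\nu\cdot\nu$-sized family of scattered pieces, versus a well-ordered or anti-well-ordered index set giving the sums $Q_\zeta = \bigcup_{a\in\tau} P_{a,\zeta}$, scattered by Hausdorff's closure theorem). The only differences are cosmetic: you make explicit the padding of the witnessing families to uniform length $\nu$ and the cardinal arithmetic, which the paper leaves implicit.
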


\begin{proof}
  Let $P$ be an order of type $\varphi$. We proceed by induction on the complexity of $\varphi$. First suppose $\varphi \in \mathcal{BL}_\kappa$.
  If $\card{\varphi} < \kappa$, then $P$ is the union of fewer than $\kappa$ singletons, so $\varphi$
  is certainly $\opair{\aleph_0}{\kappa}$-scattered. If, on the other hand, $\varphi$ is a well-order or
  anti-well-order, then $\varphi$ is itself scattered and hence $\opair{\aleph_0}{\kappa}$-scattered.

  Next, suppose $\tau \in \mathcal{BL}_\kappa$, $\varphi = \sum_{a \in \tau} \varphi_a$, and each
  $\varphi_a$ is $\opair{\aleph_0}{\kappa}$-scattered. Let $T$ be an order of type $\tau$. For each $a \in T$, let $P_a$ be an order of type
  $\varphi_a$ and let $\soast{P_{a, \zeta}}{\zeta < \nu}$ be a family of suborders of $P_a$ witnessing
  that $\varphi_a$ is $\opair{\aleph_0}{\kappa}$-scattered. We may assume that $P$ consists of the
  set $\soast{\opair{a}{b}}{a \in T \wedge b \in P_a}$ ordered lexicographically.

  Suppose first that $\card{T} < \kappa$. For $a \in T$ and $\zeta < \nu$, let
  $P'_{a, \zeta} = \{a\} \times P_{a, \zeta}$. Then $P'_{a, \zeta}$ is scattered as a suborder of $P$, so
  $P = \bigcup_{a \in T, \zeta < \nu} P'_{a, \zeta}$ is the union of fewer than $\kappa$ scattered linear orders, so $\varphi$ is
  $\opair{\aleph_0}{\kappa}$-scattered.

  Finally, suppose that $T$ is a well-order or anti-well-order.
  Then, for each $\zeta < \nu$, $Q_\zeta \dfeq \bigcup_{a \in T} P'_{a, \zeta}$ is
  scattered as a suborder of $P$, so $P = \bigcup_{\zeta < \nu} Q_\zeta$ and $\varphi$ is
  $\opair{\aleph_0}{\kappa}$-scattered.
\end{proof}

For strongly inaccessible $\kappa$, the preceding proposition fails. To show this, we need some useful
facts about saturated linear orders. Let $\mu$ be an infinite, regular cardinal. An easy argument
shows that there is a $\mu$-saturated linear order of size $\mu^{<\mu}$. In fact, if
$\tau_0$ is any linear order of size $\mu^{<\mu}$, there is a $\mu$-saturated linear
order $\tau$ of size $\mu^{<\mu}$ such that $\tau_0 \leqslant \tau$. If $\mu^{<\mu} = \mu$, there is thus a
$\mu$-saturated linear order of size $\mu$. Any two such linear orders are isomorphic. Similarly, if $\tau$ is
$\mu$-saturated and $\varphi$ is a linear order of size $\mu$, then $\varphi \leqslant \tau$. In fact, the following holds.

\begin{lemma} \label{sum_embedding_lemma}
  Suppose $\tau$ is a $\mu$-saturated order type and $\varphi$ is an order type of size $\mu$. Then there
  are $\mu$-saturated order types $\soast{\tau_a}{a \in \varphi}$ such that $\sum_{a \in \varphi} \tau_a \leqslant \tau$.
\end{lemma}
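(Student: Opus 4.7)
Let $P$ be an order of type $\varphi$ and $T$ an order of type $\tau$. The idea is to embed a ``doubled'' version $2\varphi$ of $\varphi$ into $\tau$ and then, for each $a \in P$, take $\tau_a$ to be the order type of the open interval in $T$ spanned by the two images corresponding to $a$.

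First, I would consider the set $P' \dfeq \bigsoast{\opair{a}{i}}{a \in P,\ i \in \{0,1\}}$ linearly ordered lexicographically; note that $\otp(P') = 2\varphi$ and $\card{P'} = \mu$. Since every linear order of size $\mu$ embeds into a $\mu$-saturated order (as recalled immediately before the statement of the lemma), I would fix an order-preserving injection $g : P' \to T$. For each $a \in P$ set $L_a \dfeq g(\opair{a}{0})$, $R_a \dfeq g(\opair{a}{1})$, and $T_a \dfeq \opin{L_a}{R_a}{T}$.

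Next, I would verify the two desired properties. For the ordering of the $T_a$: if $a <_P b$ then $\opair{a}{1} <_{P'} \opair{b}{0}$, hence $R_a <_T L_b$, which gives $T_a <_T T_b$; in particular, the $T_a$ are pairwise disjoint. For the $\mu$-saturation of $\otp(T_a)$: given $A, B \subseteq T_a$ with $A <_T B$ and $\card{A}, \card{B} < \mu$, the sets $A \cup \{L_a\}$ and $B \cup \{R_a\}$ still have size strictly below $\mu$ and still satisfy $A \cup \{L_a\} <_T B \cup \{R_a\}$, so the $\mu$-saturation of $T$ supplies some $c \in T$ strictly between them; such a $c$ automatically lies in $T_a$ and realizes the cut.

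Finally, setting $\tau_a \dfeq \otp(T_a)$ gives $\mu$-saturated order types, and $\bigcup_{a \in P} T_a$ is a suborder of $T$ whose induced order type is precisely $\sum_{a \in \varphi} \tau_a$, yielding $\sum_{a \in \varphi} \tau_a \leqslant \tau$ as desired. The only real content of the argument is the standard observation that an open interval of a $\mu$-saturated order is itself $\mu$-saturated; everything else is a direct unpacking of the definitions together with the reservation of ``gap intervals'' via the auxiliary embedding~$g$.
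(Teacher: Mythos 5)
Your proof is correct and matches the paper's argument in all essentials: both reserve, for each $a \in \varphi$, a pair of points of $T$ bounding pairwise disjoint, correctly ordered open intervals, and both rest on the observation that a (nonempty) open interval of a $\mu$-saturated order is itself $\mu$-saturated. The only cosmetic difference is that the paper constructs the endpoint pairs by a transfinite recursion appealing to saturation at each step, whereas you obtain them all at once by embedding the order $2\varphi$ of size $\mu$ into $T$ via the universality of $\mu$-saturated orders recalled just before the lemma.
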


\begin{proof}
  Let $P$ be an order of type $\varphi$ and let $\soast{a_\alpha}{\alpha < \mu}$ be an enumeration of its elements.
  Furthermore, let $T$ be an order of type $\tau$.
  %Enumerate $\varphi$ as $\{a_\alpha \mid \alpha < \mu\}$.
  By recursion on $\alpha < \mu$, define %construct
  $\langle b^0_\alpha, b^1_\alpha \mid \alpha < \mu \rangle$ such that:
  \begin{itemize}
    \item{for all $\alpha < \mu$, $b^0_\alpha, b^1_\alpha \in T$ and $b^0_\alpha <_T b^1_\alpha$;}
    \item{for $\alpha, \beta < \mu$, if $a_\alpha <_P a_\beta$, then $b^1_\alpha <_T b^0_\beta$ and,
      if $a_\beta <_P a_\alpha$, then $b^1_\beta <_T b^0_\alpha$.}
  \end{itemize}
  The construction is straightforward using the fact that $\tau$ is $\mu$-saturated. Also, since $\tau$ is
  $\mu$-saturated, for each $\alpha < \mu$, the interval $\opin{b^0_\alpha}{b^1_\alpha}{T}$ is itself a
  $\mu$-saturated order. For all $\alpha < \mu$, let $T_{a_\alpha} = \opin{b^0_\alpha}{b^1_\alpha}{T}$, and
  let $\tau_{a_\alpha} = \otp(T_{a_\alpha})$. Then $\bigcup_{a \in P} T_a$
  is a suborder of $T$ of type $\sum_{a \in \varphi} \tau_a$.
\end{proof}

\begin{proposition} \label{weakly_scattered_union}
  Suppose $T$ is a $\mu$-saturated linear order, $\nu < \mu$, and $\soast{T_\alpha}{\alpha < \nu}$
  is a collection of weakly $\mu$-scattered suborders of $T$.
  Then $\bigcup_{\alpha < \nu} T_\alpha \ne T$.
\end{proposition}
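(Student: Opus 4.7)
The plan is to assume for contradiction that $T = \bigcup_{\alpha<\nu} T_\alpha$ and to construct, by recursion on $\alpha \leqslant \nu$, a nested family of non-empty open subintervals $I_\alpha = \opin{u_\alpha}{v_\alpha}{T}$ of $T$ such that each $I_\alpha$ is $\mu$-saturated as a suborder of $T$ and $I_\alpha \cap T_\beta = \emptyset$ for every $\beta < \alpha$. Taking $\alpha = \nu$ then produces a non-empty $I_\nu \subseteq T$ disjoint from every $T_\beta$ with $\beta < \nu$, contradicting the assumption that the $T_\beta$ cover $T$.

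At successor stages the construction rests on the following local claim: if $J$ is a $\mu$-saturated order and $S \subseteq J$ is weakly $\mu$-scattered, then there exist $u <_J v$ with $\opin{u}{v}{J} \cap S = \emptyset$. To prove this I would first record the routine observation that every non-empty open subinterval of a $\mu$-saturated order is itself $\mu$-saturated — for $A, B$ of size less than $\mu$ inside $\opin{u}{v}{J}$ one applies saturation of $J$ to $A \cup \{u\}$ and $B \cup \{v\}$. Now suppose toward a contradiction that every non-empty open subinterval of $J$ meets $S$; then $S$ is dense in $J$. Pick $s_0 <_J s_1$ in $S$ (easy by density) and set $S' \dfeq S \cap \opin{s_0}{s_1}{J}$. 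For every $A, B \in [S']^{<\mu}$ with $A <_J B$, two successive applications of $\mu$-saturation of $J$ furnish $d_0 <_J d_1$ with $A \cup \{s_0\} <_J d_0 <_J d_1 <_J B \cup \{s_1\}$, and density of $S$ in $J$ then puts a point of $S$ into $\opin{d_0}{d_1}{J} \subseteq \opin{s_0}{s_1}{J}$; this point lies in $S'$ and strictly between $A$ and $B$. Hence $S'$ is a $\mu$-saturated suborder of $S$, contradicting that $S$ is weakly $\mu$-scattered.

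Given this claim, at a successor stage $\alpha + 1$ I apply it with $J = I_\alpha$ (which is $\mu$-saturated by induction) and $S = T_\alpha \cap I_\alpha$ (still weakly $\mu$-scattered as a suborder of $T_\alpha$) to obtain $u, v \in I_\alpha$ with $\opin{u}{v}{T} \cap T_\alpha = \emptyset$, and set $I_{\alpha+1} \dfeq \opin{u}{v}{T}$. At a limit stage $\beta \leqslant \nu$, the sets $\{u_\alpha : \alpha < \beta\}$ and $\{v_\alpha : \alpha < \beta\}$ each have size at most $\card{\beta} < \mu$ and form a cut in $T$, so two applications of $\mu$-saturation of $T$ yield $u_\beta <_T v_\beta$ strictly between them; the resulting $I_\beta \dfeq \opin{u_\beta}{v_\beta}{T}$ is automatically contained in every earlier $I_\alpha$ and is $\mu$-saturated by the subinterval observation above.

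The main obstacle is the local claim, because ``weakly $\mu$-scattered'' is a purely negative hypothesis: one must manufacture a positive object, namely a $\mu$-saturated suborder of $S$, from the assumption that $S$ cannot be avoided inside $J$. The key leverage is the subinterval observation, which lets every open window inside $J$ be treated as a miniature copy of $J$, so that unavoidability upgrades automatically to $\mu$-saturation of $S$ inside a suitable slice $\opin{s_0}{s_1}{J}$.
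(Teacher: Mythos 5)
Your proof is correct, but it takes a genuinely different route from the paper's. The paper never produces an actual open interval of $T$ disjoint from a given $T_\alpha$; instead it records the failure of saturation of $T_\beta \cap \opin{C_\beta}{D_\beta}{T}$ directly by a pair of \emph{small sets} $A_\beta <_T B_\beta$ (each of size $<\mu$, possibly empty) with no point of $T_\beta$ between them, accumulates these pairs into $A = \bigcup_\alpha A_\alpha$ and $B = \bigcup_\alpha B_\alpha$, and applies the saturation of $T$ exactly once at the end to get a single point between $A$ and $B$ that then cannot lie in any $T_\alpha$. You instead prove a strictly stronger local statement --- that a weakly $\mu$-scattered $S \subseteq J$ must miss a whole interval $\opin{u}{v}{J}$ with \emph{point} endpoints --- and this is where your extra work lies: the ``density of $S$ in $J$ forces a $\mu$-saturated $S' \subseteq S$'' argument, with the two interpolated points $d_0 <_J d_1$ guaranteeing that the point of $S$ you find lands strictly inside the cut $(A,B)$, is exactly the content that the paper's weaker small-set gaps let it avoid. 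What your version buys: the local claim is a clean, reusable fact in its own right, and your limit stages only ever apply saturation to the $\leqslant \card{\beta} \leqslant \nu < \mu$ many endpoints $u_\alpha, v_\alpha$, whereas the paper's final step needs $\card{\bigcup_{\alpha<\nu}A_\alpha} < \mu$, i.e.\ $\nu < \cof(\mu)$ (harmless, since $\mu$ may be taken regular, but your argument does not even raise the issue). What the paper's version buys is economy: one invocation of ``not $\mu$-saturated'' per step, one invocation of saturation of $T$ in total, and no auxiliary contradiction argument.
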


\begin{proof}
  %Assume towards a contradiction that $T$ is a $\mu$-saturated linear order, $\nu < \mu$, $\soast{T_\alpha}{\alpha < \nu}$ is a family of weakly $\mu$-scattered suborders of $T$ but $\bigcup_{\alpha < \nu} T_\alpha) = T$.

  We define sequences $\langle A_\alpha \mid \alpha < \nu \rangle$ and $\langle B_\alpha \mid \alpha < \nu \rangle$
  such that the following hold:
  \begin{enumerate}
    \item{For all $\alpha < \nu$, $A_\alpha, B_\alpha \subseteq T_\alpha$ and $\card{A_\alpha},
      \card{B_\alpha} < \mu$ (one or both of $A_\alpha$ and $B_\alpha$ may be empty).}
    \item{For all $\alpha < \beta < \nu$, $A_\alpha <_T A_\beta$ and $B_\beta <_T B_\alpha$.}
    \item{For all $\alpha, \alpha' < \nu$, $A_{\alpha} < B_{\alpha'}$.}
    \item{For all $\alpha < \nu$, $\opin{A_\alpha}{B_\alpha}{{T_\alpha}} = \emptyset$.}
  \end{enumerate}
  The construction is by recursion, as follows. Suppose $\beta < \nu$ and $\langle A_\alpha \mid \alpha < \beta \rangle$ and
  $\langle B_\alpha \mid \alpha < \beta \rangle$ have been defined. Let $C_\beta = \bigcup_{\alpha < \beta} A_\alpha$
  and $D_\beta = \bigcup_{\alpha < \beta} B_\alpha$. Note that $C_\beta <_T D_\beta$. By assumption, $T_\beta$
  is weakly $\mu$-scattered. In particular, $T_\beta \cap \opin{C_\beta}{D_\beta}{T}$ is not $\mu$-saturated.
  There are thus $A_\beta, B_\beta \subseteq T_\beta \cap \opin{C_\beta}{D_\beta}{T}$ such that $\card{A_\beta}, \card{B_\beta}
  < \mu$, $A_\beta <_T B_\beta$, and $\opin{A_\beta}{B_\beta}{T_\beta} = \emptyset$. $A_\beta$ and $B_\beta$ are
  then as desired.

  At the end of the construction, let $A = \bigcup_{\alpha < \nu} A_\alpha$ and $B = \bigcup_{\alpha < \nu} B_\alpha$.
  Then $\card{A}, \card{B} < \mu$ and $A <_T B$. Thus, since $T$ is $\mu$-saturated, there is $d \in T$ such that
  $A <_T \{d\} <_T B$. Suppose that, for some $\alpha < \nu$, $d \in T_\alpha$. Then $d \in \opin{A_\alpha}{B_\alpha}{{T_\alpha}}$,
  which contradicts requirement (4) in the construction. Thus, $\bigcup_{\alpha < \nu} T_\alpha \ne T$.
\end{proof}

This has two corollaries, the first of which is immediate.

\begin{corollary} \label{saturated_cor}
  Suppose $T$ is a $\mu$-saturated linear order, $\nu < \mu$, and $c:T \rightarrow \nu$. Then there is a $\mu$-saturated
  suborder $T' \subseteq T$ such that $c \restriction T'$ is constant.
\end{corollary}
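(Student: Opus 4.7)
The plan is to derive the corollary from Proposition \ref{weakly_scattered_union} by contraposition, with the fibers of $c$ playing the role of the suborders $T_\alpha$. Given $c : T \to \nu$, for each $\alpha < \nu$ set $T_\alpha \dfeq c^{-1}(\{\alpha\})$; these fibers partition $T$, and the goal is to find some $\alpha < \nu$ such that $T_\alpha$ contains a $\mu$-saturated suborder $T'$, on which $c$ is then constantly equal to $\alpha$.

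Suppose for contradiction that no $T_\alpha$ contains a $\mu$-saturated suborder. Unfolding Definition \ref{gen_scattered_def}(2), this is exactly the statement that each $\otp(T_\alpha)$ is weakly $\mu$-scattered, since weak $\mu$-scatteredness asserts that no $\mu$-saturated order type embeds into it. Because $\nu < \mu$, Proposition \ref{weakly_scattered_union} then applies to the family $\langle T_\alpha \mid \alpha < \nu\rangle$ and yields $\bigcup_{\alpha < \nu} T_\alpha \ne T$, contradicting the fact that the fibers of $c$ cover $T$. Hence some $T_\alpha$ embeds a $\mu$-saturated suborder $T'$, which witnesses the conclusion. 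There is essentially no obstacle to overcome, as the substantive work has already been carried out in Proposition \ref{weakly_scattered_union}; this is why the corollary is described as immediate.
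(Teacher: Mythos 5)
Your proof is correct and matches the paper's intent exactly: the paper gives no explicit argument, calling the corollary ``immediate'' from Proposition \ref{weakly_scattered_union}, and your contrapositive application to the fibers $c^{-1}(\{\alpha\})$ is precisely that immediate derivation.
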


\begin{corollary}
  Suppose $\kappa$ is a strongly inaccessible cardinal. Then there is a $\kappa$-scattered linear
  order that is not $\opair{\aleph_0}{\kappa}$-scattered.
\end{corollary}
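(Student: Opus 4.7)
The plan is to take $L$ to be a lexicographic sum of $\mu$-saturated orders, one for each regular cardinal $\mu<\kappa$. Concretely, since $\kappa$ is strongly inaccessible, $\mu^{<\mu}<\kappa$ holds for each regular $\mu<\kappa$, so a $\mu$-saturated linear order $L_\mu$ of size $\mu^{<\mu}$ exists. I would set $L\dfeq\sum_\mu L_\mu$, indexed by the (well-ordered) set of regular cardinals below $\kappa$.

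To show $L$ is $\kappa$-scattered, I would suppose toward contradiction that $S\subseteq L$ carries a $\kappa$-dense order type. If $S$ contained two points $x<_L y$ lying in a common summand $L_\mu$, then $\opin{x}{y}{L}=\opin{x}{y}{L_\mu}$, whose cardinality is at most $\card{L_\mu}<\kappa$, contradicting $\kappa$-density. Hence $S$ meets each block $L_\mu$ in at most one point, so the order on $S$ is inherited from the well-ordered index set and $S$ is itself well-ordered. But any $\kappa$-dense order is dense in the ordinary order-theoretic sense (between any two points there is another), whereas any non-trivial well-order admits a successor pair, which is the desired contradiction.

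For the failure of $\opair{\aleph_0}{\kappa}$-scatteredness, I would assume $L=\bigcup_{\zeta<\nu}P_\zeta$ with $\nu<\kappa$ and each $P_\zeta$ scattered. By inaccessibility, one can pick a regular cardinal $\mu$ with $\max(\nu,\aleph_0)<\mu<\kappa$. Every scattered linear order is weakly $\mu$-scattered, since any $\mu$-saturated suborder would in particular be $\aleph_0$-saturated and hence contain a copy of $\otQ$. Thus each $L_\mu\cap P_\zeta$ is weakly $\mu$-scattered, and Proposition \ref{weakly_scattered_union}, applied to the $\mu$-saturated order $L_\mu$ and the sequence $\seq{L_\mu\cap P_\zeta}{\zeta<\nu}$, yields $\bigcup_{\zeta<\nu}(L_\mu\cap P_\zeta)\neq L_\mu$, contradicting $L_\mu\subseteq L$.

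The main obstacle is the $\kappa$-scatteredness verification: it relies on the convexity of each summand in the lexicographic sum, which forbids a putative $\kappa$-dense suborder from placing more than one point into any single $L_\mu$ and thereby forces well-orderedness of that suborder. The failure of $\opair{\aleph_0}{\kappa}$-scatteredness is then a direct application of the saturated-union proposition already in hand.
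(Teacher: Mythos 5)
Your proof is correct and follows essentially the same route as the paper's: a lexicographic sum of $\mu$-saturated orders for regular $\mu$ cofinal in $\kappa$, with Proposition \ref{weakly_scattered_union} applied to a single saturated summand (whose intersections with the putative scattered pieces are weakly $\mu$-scattered) to rule out a decomposition into fewer than $\kappa$ scattered suborders. The only difference is cosmetic: you verify $\kappa$-scatteredness by hand via convexity of the summands, whereas the paper obtains it from the closure properties in Theorem \ref{hausdorff_structure_thm}.
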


\begin{proof}
  Let $\langle \mu_\zeta \mid \zeta < \kappa \rangle$ be an increasing sequence of regular
  cardinals, cofinal in $\kappa$. For each $\zeta < \kappa$, let $\varphi_\zeta$ be a $\mu_\zeta$-saturated
  linear order type of size $\mu_\zeta^{<\mu_\zeta}$. Since $\kappa$ is strongly inaccessible,
  $\card{\varphi_\zeta} < \kappa$ so, in particular, $\varphi_\zeta$ is $\kappa$-scattered. Let $\varphi =
  \sum_{\zeta < \kappa} \varphi_\zeta$. Then $\varphi$ is $\kappa$-scattered. We claim $\varphi$ is not
  $\opair{\aleph_0}{\kappa}$-scattered. Otherwise, there would be $\nu < \kappa$, an order $T$ of type
  $\varphi$, and a family
  $\soast{T_\xi}{\xi < \nu}$ of suborders of $T$ such that each $T_\xi$ is scattered and
  $T = \bigcup_{\xi < \nu} T_\xi$. Fix $\zeta < \kappa$ such that $\nu < \mu_\zeta$, and let
  $P \subseteq T$ be a $\mu_\zeta$-saturated linear order of size
  $\mu_\zeta^{< \mu_\zeta}$.
  Since $P$ is $\mu_\zeta$-saturated, by Lemma \ref{weakly_scattered_union},
  $P \neq \bigcup_{\xi < \nu} (T_\xi \cap P)$. This is a contradiction, so
  $\varphi$ is not $\opair{\aleph_0}{\kappa}$-scattered.
\end{proof}

On the other hand, it is straightforward to construct even a $\sigma$-scattered (i.e. $\opair{\aleph_0}{\aleph_1}$-scattered)
linear order of arbitrarily high density, so, for every regular, uncountable cardinal $\kappa$, there is
an $\opair{\aleph_0}{\kappa}$-scattered linear order that is not $\kappa$-scattered. Later results will imply that,
for every regular, uncountable $\kappa$, there is a weakly $\kappa$-scattered linear order that is not
$\opair{\aleph_0}{\kappa}$-scattered. Therefore, we will obtain the following corollary.

\begin{corollary} \label{inclusion_cor}
  Suppose $\kappa$ is a successor cardinal. Let $\mathcal{S}_\kappa$, $\mathcal{S}_{\aleph_0, \kappa}$, and $\mathcal{WS}_\kappa$
  be the classes of $\kappa$-scattered, $\opair{\aleph_0}{\kappa}$-scattered, and weakly $\kappa$-scattered linear order types,
  respectively. Then $\mathcal{S}_\kappa \subseteq \mathcal{S}_{\aleph_0, \kappa} \subseteq \mathcal{WS}_\kappa$,
  and neither of the inclusions is reversible.
\end{corollary}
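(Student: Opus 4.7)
The first inclusion $\mathcal{S}_\kappa \subseteq \mathcal{S}_{\aleph_0, \kappa}$ is precisely the preceding proposition, so my plan focuses on the remaining inclusion together with the two failures of reversal.

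For $\mathcal{S}_{\aleph_0, \kappa} \subseteq \mathcal{WS}_\kappa$, I would argue by contradiction, leveraging Proposition \ref{weakly_scattered_union}. Given $\varphi \in \mathcal{S}_{\aleph_0, \kappa}$, fix an order $P$ of type $\varphi$ together with scattered suborders $\seq{P_\zeta}{\zeta < \nu}$ covering $P$, with $\nu < \kappa$. Suppose, for contradiction, that some $\kappa$-saturated $T$ embeds into $P$. Each intersection $T \cap P_\zeta$ is scattered (as a suborder of a scattered order) and hence weakly $\kappa$-scattered: indeed, any $\kappa$-saturated linear order is in particular $\aleph_0$-saturated and therefore contains a copy of $\eta$, which cannot embed into any scattered order. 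Applying Proposition \ref{weakly_scattered_union} to $T$ with $\mu = \kappa$ and the family $\seq{T \cap P_\zeta}{\zeta < \nu}$ then yields $\bigcup_{\zeta < \nu}(T \cap P_\zeta) \neq T$, contradicting $T \subseteq P$.

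The two strict inclusions are essentially already flagged in the paragraph preceding the corollary. For $\mathcal{S}_{\aleph_0, \kappa} \not\subseteq \mathcal{S}_\kappa$, any $\sigma$-scattered linear order of density at least $\kappa$ suffices, since such an order is automatically $\opair{\aleph_0}{\kappa}$-scattered (using $\aleph_1 \leqslant \kappa$) while admitting a $\kappa$-dense suborder. The failure $\mathcal{WS}_\kappa \not\subseteq \mathcal{S}_{\aleph_0, \kappa}$ is deferred to the constructions of later sections, which produce a weakly $\kappa$-scattered linear order not expressible as a union of fewer than $\kappa$ scattered suborders. The only substantive step is therefore the argument for $\mathcal{S}_{\aleph_0, \kappa} \subseteq \mathcal{WS}_\kappa$, and the only point that needs care there is that Proposition \ref{weakly_scattered_union} can in fact be invoked---its hypothesis demands only the weaker property ``weakly $\kappa$-scattered,'' which scattered orders trivially satisfy.
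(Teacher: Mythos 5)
Your proposal is correct and follows the same route the paper takes: the first inclusion is the preceding proposition, the non-reversibility of the first inclusion is the remark about $\sigma$-scattered orders containing $\kappa$-dense suborders, and the non-reversibility of the second is deferred to the later proposition producing a weakly $\kappa$-scattered order of size $\kappa$ that is not $\opair{\kappa}{\kappa}$-scattered (hence not $\opair{\aleph_0}{\kappa}$-scattered). Your explicit derivation of $\mathcal{S}_{\aleph_0,\kappa} \subseteq \mathcal{WS}_\kappa$ from Proposition \ref{weakly_scattered_union}, including the observation that scattered orders are weakly $\kappa$-scattered because $\kappa$-saturated orders embed $\otQ$, is exactly the argument the paper leaves implicit.
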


We turn now to partition relations. Due to the restrictions identified by Theorem \ref{sierpinski_thm}, we will largely be interested
in partition relations of the form $\tau \arrows (\varphi, n)^2$,
where $n < \omega$. The following Lemma will be very useful.

\begin{lemma} \label{pairs_to_singletons_lemma}
  Suppose $\varphi$, $\tau$, and $\rho$ are linear order types, $n < \omega$, $\tau \arrows (\varphi, n)^2$,
  and, for all cardinals $\nu < \card{\varphi}$, $\rho \arrows (\tau)^1_\nu$. Then $\rho\varphi \arrows
  (\varphi, n+1)^2$.
\end{lemma}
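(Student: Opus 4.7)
The plan is to argue the contrapositive. Fix an order $R$ of type $\rho\varphi$ and represent $R = \bigcup_{a \in P} R_a$, where $\otp(P) = \varphi$ and $\otp(R_a) = \rho$ for every $a \in P$. Suppose $f : [R]^2 \to 2$ admits no $1$-monochromatic subset of size $n+1$; I shall construct a $0$-monochromatic subset of $R$ of order type $\varphi$. For each $y \in R$ and $i \in \{0,1\}$, let $N^i(y) = \{z \in R \setminus \{y\} : f(\{y,z\}) = i\}$.

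The central preliminary observation is that, for every $y \in R$ and every $b \in P$, the set $R_b \cap N^1(y)$ contains no subset of order type $\tau$. Indeed, if $T \subseteq R_b \cap N^1(y)$ had $\otp(T) = \tau$, then applying the hypothesis $\tau \arrows (\varphi, n)^2$ to the restriction of $f$ to $[T]^2$ would yield either a $0$-monochromatic subset of $T$ of order type $\varphi$, which concludes the proof, or a $1$-monochromatic $n$-subset $X \subseteq T$; since $y \notin T$, the set $\{y\} \cup X$ would then be a $1$-monochromatic subset of size $n+1$, contradicting the standing assumption.

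To construct the $0$-monochromatic copy of $\varphi$, well-order $P$ as $(a_\alpha)_{\alpha < \lambda}$ with $\lambda = |P| = |\varphi|$; I treat the case that $\lambda$ is infinite, the finite case being a routine adaptation. By transfinite recursion I choose $x_{a_\alpha} \in R_{a_\alpha}$ so that $f(\{x_{a_\alpha}, x_{a_\beta}\}) = 0$ for every $\beta < \alpha$. At stage $\alpha$, define $c : R_{a_\alpha} \to \alpha + 1$ by $c(z) = \min \{\beta < \alpha : f(\{z, x_{a_\beta}\}) = 1\}$ when this minimum exists, and $c(z) = \alpha$ otherwise. Since $|\alpha| + 1 < |\varphi|$, the hypothesis $\rho \arrows (\tau)^1_{|\alpha|+1}$ furnishes a $\tau$-monochromatic subset $T \subseteq R_{a_\alpha}$ for $c$; the common value of $c$ on $T$ cannot be any $\beta < \alpha$, for that would place $T$ inside $R_{a_\alpha} \cap N^1(x_{a_\beta})$ and contradict the preliminary observation. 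Hence $c \equiv \alpha$ on $T$, meaning $T \subseteq \bigcap_{\beta < \alpha} N^0(x_{a_\beta})$, and any element of $T$ can serve as $x_{a_\alpha}$. The selector $\{x_{a_\alpha} : \alpha < \lambda\}$ is pairwise $0$-connected and, via the order-preserving bijection $a_\alpha \mapsto x_{a_\alpha}$, has order type $\varphi$. The main subtlety is the bookkeeping at each stage to guarantee the colour bound $|\alpha|+1 < |\varphi|$, which is precisely what the hypothesis $\rho \arrows (\tau)^1_\nu$ for all cardinals $\nu < |\varphi|$ enables.
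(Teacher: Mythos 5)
Your proposal is correct and follows essentially the same argument as the paper: a transfinite recursion selecting one point from each copy of $\rho$, using $\rho \arrows (\tau)^1_\nu$ at each stage to extract a $\tau$-set on which the "first bad index" colouring is constant, and then $\tau \arrows (\varphi,n)^2$ to conclude. The only cosmetic difference is that you isolate the observation about $R_b \cap N^1(y)$ up front and add an explicit extra colour for "no bad index", whereas the paper splits into cases at each stage; the content is identical.
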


\begin{proof}
  Let $\psi = \rho\varphi$, let $\opair{R}{<_R}$ be an ordered set of type $\rho$ and $\opair{P}{<_P}$ an ordered set of type $\varphi$.
  Let $f:[P \times R]^2 \rightarrow 2$. We will show that there is either a subset of
  $P \times R$ having order type $\varphi$ under the lexicographic order that is $0$-homogeneous for $f$ or a subset of size $n+1$ that is $1$-homogeneous
  for $f$. Enumerate the elements of $P$
  as $\seq{a_\zeta}{\zeta < \card{P}}$. We will attempt to construct a sequence $\seq{b_\zeta}{\zeta < \card{\varphi}}$ from $R$ such that
  $H \dfeq \soast{\opair{a_\zeta}{b_\zeta}}{\zeta < \card{P}}$ is $0$-homogeneous for $f$.
  Since the order type of $H$ under the lexicographic ordering is $\varphi$, a successful construction will establish the Lemma
  for this particular coloring..

  We proceed by recursion on $\zeta < \card{P}$. To start, let $b_0$ be an arbitrary element of $R$. Suppose $\zeta < \card{P}$
  and $b_\xi$ has been defined for $\xi < \zeta$ so that $\soast{\opair{a_\xi}{b_\xi}}{\xi < \zeta}$ is $0$-homogeneous for $f$.
  If there is $b \in R$ such that, for all $\xi < \zeta$,
  $f(\{\opair{a_\xi}{b_\xi}, \opair{a_\zeta}{b}\}) = 0$, then let $b_\zeta$ be such a $b$. Suppose there is no such $b$. Define a function
  $g: R \rightarrow \zeta$ by letting, for all $b \in R$, $g(b)$ be least such that $f(\{\opair{a_{g(b)}}{b_{g(b)}}, \opair{a_\zeta}{b}\}) = 1$.
  Since $\rho \arrows (\tau)^1_{\card{\zeta}}$, there is $\xi < \zeta$ and $B \subseteq R$ of order type $\tau$ such that $B$ is
  $\xi$-homogeneous for $g$. Consider $f \restriction [\{a_\zeta\} \times B]^2$. Since $\tau \arrows (\varphi, n)^2$, we either have a
  subset $B_0 \subseteq B$ of order type $\varphi$ such that $\{a_\zeta\} \times B_0$ is $0$-homogeneous for $f$ or a subset $B_1 \subseteq B$
  of size $n$ such that $\{a_\zeta\} \times B_1$ is $1$-homogeneous for $f$. In the former case, we have found a $0$-homogeneous
  subset of $P \times R$ of order type $\varphi$ and are thus finished. In the latter case, $\{\opair{a_\xi}{b_\xi}\} \cup (\{a_\zeta\} \times B_1)$ is
  a $1$-homogeneous subset of $P \times R$ of size $n+1$, and we are again done.

  Therefore, if our attempted construction of $H$ fails, it is necessarily because we have found either a $0$-homogeneous set of
  order type $\varphi$ or a $1$-homogeneous of size $n+1$. In any outcome, we have verified the Lemma.
\end{proof}

%\section{Saturated linear orders}
\label{sat_section}

We now easily have the following.

\begin{theorem}
 \label{theorem : pair-paradigma}
  Suppose $\kappa$ is an infinite, regular cardinal and $\varphi$ is a linear order type of size $\kappa$. Then, for all
  $n < \omega$ and all $\kappa$-saturated linear order types $\tau$,  $\tau \arrows (\varphi, n)^2$.
\end{theorem}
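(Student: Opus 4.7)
The plan is to prove, by induction on $n < \omega$, that $\tau \arrows (\varphi, n)^2$ holds for every $\kappa$-saturated order type $\tau$ and every linear order type $\varphi$ of size $\kappa$. The cases $n \leqslant 1$ are trivial: for $n = 0$ the statement is vacuous, and for $n = 1$ any singleton in $T$ (nonempty since $\tau$ is $\kappa$-saturated) serves as a $1$-homogeneous set.

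For the inductive step, assume the result for $n$, and suppose $T$ has type $\tau$ ($\kappa$-saturated) and $f : [T]^2 \to 2$. By Lemma \ref{sum_embedding_lemma}, we may fix an order $P$ of type $\varphi$ together with pairwise disjoint $\kappa$-saturated suborders $\langle T_a : a \in P\rangle$ of $T$ with $T_a <_T T_b$ whenever $a <_P b$. Enumerate $P = \{a_\zeta : \zeta < \kappa\}$ (as a set), and attempt, by recursion on $\zeta < \kappa$, to pick $b_\zeta \in T_{a_\zeta}$ so that $H \dfeq \{\opair{a_\zeta}{b_\zeta} : \zeta < \kappa\}$ is $0$-homogeneous for $f$. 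If the construction succeeds at every stage, then $H$, ordered lexicographically within $\bigcup_{a \in P} T_a \subseteq T$, has order type $\varphi$ and is $0$-homogeneous, and we are done.

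Otherwise, the construction halts at some stage $\zeta < \kappa$ because for every $b \in T_{a_\zeta}$ there is some $\xi < \zeta$ with $f(\{\opair{a_\xi}{b_\xi}, \opair{a_\zeta}{b}\}) = 1$. Define $g : T_{a_\zeta} \to \zeta$ by letting $g(b)$ be the least such $\xi$. Since $|\zeta| < \kappa$ and $T_{a_\zeta}$ is $\kappa$-saturated, Corollary \ref{saturated_cor} gives a $\kappa$-saturated $B \subseteq T_{a_\zeta}$ on which $g$ is constantly equal to some $\xi < \zeta$. By the inductive hypothesis applied to $B$ (which is $\kappa$-saturated) and $f \restriction [B]^2$, we obtain either a suborder $Q \subseteq B$ of type $\varphi$ that is $0$-homogeneous for $f$, in which case we are done, or a set $B_1 \subseteq B$ of size $n$ that is $1$-homogeneous for $f$. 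In the latter case, $\{\opair{a_\xi}{b_\xi}\} \cup (\{a_\zeta\} \times B_1)$ is a $1$-homogeneous subset of $T$ of size $n + 1$, since pairs inside $B_1$ are colored $1$ by assumption and pairs between $\opair{a_\xi}{b_\xi}$ and $\{a_\zeta\} \times B_1$ are colored $1$ by the definition of $g$ together with the constancy of $g$ on $B$.

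The main obstacle is essentially already handled by the earlier machinery: the argument is a direct adaptation of Lemma \ref{pairs_to_singletons_lemma}, with Lemma \ref{sum_embedding_lemma} providing the structural embedding of a lexicographic sum of $\kappa$-saturated pieces into $\tau$, and Corollary \ref{saturated_cor} guaranteeing that after the small color-refinement by $g$, the surviving piece $B$ is again $\kappa$-saturated, so that the inductive hypothesis applies verbatim.
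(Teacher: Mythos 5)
Your proposal is correct and follows essentially the same route as the paper: induction on $n$ simultaneously over all $\kappa$-saturated $\tau$, using Lemma \ref{sum_embedding_lemma} to embed a sum of $\kappa$-saturated pieces, Corollary \ref{saturated_cor} as the pigeonhole step, and the recursion from Lemma \ref{pairs_to_singletons_lemma} (which the paper merely cites and you spell out, correctly adapting it so that $b_\zeta$ is chosen inside $T_{a_\zeta}$).
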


\begin{proof}
  We proceed by induction on $n$ simultaneously for all $\kappa$-saturated linear order types $\tau$. If $n \in \{0,1,2\}$, then
  trivially $\varphi \arrows (\varphi, n)^2$, so certainly $\tau \arrows (\varphi, n)^2$ for all $\kappa$-saturated
  $\tau$. Thus, suppose $2 \leqslant n < \omega$ and we have proven the theorem for $n$. Fix a $\kappa$-saturated type $\tau$%, an ordered set $\opair{T}{<_T}$ of type $\tau$ and a function $f:[T]^2 \rightarrow 2$
  . By Lemma \ref{sum_embedding_lemma}, we can find $\kappa$-saturated order types
  $\soast{\tau_a}{a \in \varphi}$ such that $\sum_{a \in \varphi} \tau_a \leqslant \tau$. By Corollary \ref{saturated_cor},
  the assumption that $\psi \arrows (\varphi, n)^2$ for all $\kappa$-saturated $\psi$, and the argument from
  Lemma \ref{pairs_to_singletons_lemma}, we have $\sum_{a \in \varphi} \tau_a \arrows (\varphi, n+1)^2$, so also
  $\tau \arrows (\varphi, n+1)^2$.
\end{proof}

\section{Weakly scattered linear orders} \label{weakly_scattered_section}
In this section, we establish positive partition relations for the class of weakly
$\kappa$-scattered linear orders under the assumption $\kappa^{<\kappa} = \kappa$. Our
aim is to prove the following theorem.

\begin{theorem} \label{weakly_scattered_thm}
  Suppose $\kappa^{<\kappa} = \kappa$ and $\varphi$ is a weakly $\kappa$-scattered linear
  order type of size at most $\kappa$. Then there is a weakly $\kappa$-scattered linear order type
  $\tau$ of size at most $\kappa$ such that, for all $n < \omega$, $\tau \arrows
  (\varphi, n)^2$.
\end{theorem}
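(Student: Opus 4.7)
My approach would combine three ingredients: a closure property for the class of weakly $\kappa$-scattered (WKS) order types under lexicographic sums, the step-up Lemma~\ref{pairs_to_singletons_lemma}, and a diagonal construction of ``resolvers'' enabled by $\kappa^{<\kappa}=\kappa$.

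First I would prove a closure lemma, analogous to Proposition~\ref{weakly_scattered_union}: if $\psi$ is WKS and $\langle \sigma_a : a \in \psi\rangle$ is a family of WKS order types of size $\leqslant\kappa$, then $\sum_{a\in\psi}\sigma_a$ is WKS of size $\leqslant\kappa$. Given a hypothetical $\kappa$-saturated sub-order $\sigma$ of the sum, partition $\sigma$ into its convex $\psi$-fibres. If every fibre is a singleton, then $\sigma$ embeds order-isomorphically into $\psi$, contradicting the weak $\kappa$-scatteredness of $\psi$. Otherwise, some fibre of size $\geqslant\kappa$ is convex in $\sigma$; using that $\kappa$-saturated orders have no endpoints and that a convex piece of a $\kappa$-saturated order lying between two elements of the ambient order inherits $\kappa$-saturation, this fibre is itself $\kappa$-saturated, contradicting the weak $\kappa$-scatteredness of the corresponding $\sigma_a$. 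In particular, $\omega$-sums and products $\rho\varphi$ of WKS orders of size $\leqslant\kappa$ are again WKS of size $\leqslant\kappa$.

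Next I would induct on $n$ to construct WKS order types $\tau_n$ of size $\leqslant\kappa$ with $\tau_n\arrows(\varphi,n)^2$. Take $\tau_2\dfeq\varphi$. For the step, apply Lemma~\ref{pairs_to_singletons_lemma} with $\tau=\tau_n$ and a WKS ``resolver'' $\rho_n$ of size $\leqslant\kappa$ satisfying $\rho_n\arrows(\tau_n)^1_\nu$ for every $\nu<\kappa$. The lemma then yields $\tau_{n+1}\dfeq\rho_n\varphi\arrows(\varphi,n+1)^2$, with $\tau_{n+1}$ WKS of size $\leqslant\kappa$ by the closure lemma. Finally set $\tau\dfeq\sum_{n<\omega}\tau_n$: this is WKS of size $\leqslant\kappa$ by the closure lemma (with index $\omega$), each $\tau_n$ is a sub-order of $\tau$, and so $\tau\arrows(\varphi,n)^2$ for every $n<\omega$.

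The main obstacle is the construction of the resolvers. Given a WKS $\psi$ of size $\leqslant\kappa$, I need a WKS order of size $\leqslant\kappa$ in which every $\nu$-colouring ($\nu<\kappa$) admits a monochromatic copy of $\psi$. Here $\kappa^{<\kappa}=\kappa$ enters essentially: I would carry out a transfinite recursion of length $\kappa$ on an underlying set of size $\kappa$, using a bookkeeping enumeration of the $\kappa$-many ``colouring templates'' (finitary data encoding partial colourings together with attempted monochromatic $\psi$-copies) to successively graft copies of $\psi$ into the partial structure in convex positions chosen to render them monochromatic under each eventual $\nu$-colouring. Keeping the partial structure WKS at every stage and at limits relies on repeated application of the closure lemma. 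Arranging the bookkeeping so that every $<\kappa$-colouring of the final order is genuinely serviced, while preserving weak $\kappa$-scatteredness throughout, is the principal technical hurdle.
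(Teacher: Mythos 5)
Your overall architecture matches the paper's: build $\tau_n$ with $\tau_n \arrows (\varphi,n)^2$ by induction via Lemma~\ref{pairs_to_singletons_lemma}, using a weakly $\kappa$-scattered ``resolver'' $\rho_n$ with $\rho_n \arrows (\tau_n)^1_\nu$ for all $\nu<\kappa$, and finish with $\tau=\sum_{n<\omega}\tau_n$; your closure lemma for lexicographic sums is essentially correct and in the spirit of Lemma~\ref{product_lemma}. But the one step you defer as ``the principal technical hurdle'' --- constructing the resolvers --- is the actual content of the theorem, and the route you sketch for it does not work. A recursion of length $\kappa$ with bookkeeping cannot service the colourings of the final order: a $\nu$-colouring of a $\kappa$-sized order is an arbitrary element of $\soafft{\kappa}{\nu}$, and there are $\nu^{\kappa}\geqslant 2^\kappa>\kappa$ of them, so they cannot be enumerated in $\kappa$ steps; nor can ``finitary templates'' anticipate the behaviour of a colouring on a copy of $\psi$, which has size $\kappa$. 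Moreover, grafting copies ``in convex positions chosen to render them monochromatic under each eventual colouring'' presupposes knowledge of colourings that are only defined after the construction ends.

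The paper avoids enumeration entirely by taking the resolver to be the lexicographic power $\soafft{\nu}{\psi}$ (or $\sum_{\nu<\kappa}\soafft{\nu}{\psi}$ when $\kappa$ is a limit). Lemma~\ref{unary_partition_lemma} proves $\soafft{\nu}{\psi}\arrows(\psi)^1_\nu$ by a diagonalisation along the $\nu$ coordinates: one builds a single branch $f_\nu$ such that colour $\alpha$ is ``killed'' at coordinate $\alpha$, for every colouring uniformly. Lemma~\ref{exponent_lemma} then shows that powers of length $<\kappa$ of weakly $\kappa$-scattered orders remain weakly $\kappa$-scattered (the limit stage needs Corollary~\ref{saturated_cor} to stabilise the splitting levels of a putative $\kappa$-saturated suborder), and $\kappa^{<\kappa}=\kappa$ is what keeps the power of size $\kappa$. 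If you replace your bookkeeping construction with this power construction, your argument becomes the paper's proof.
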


Fix for the rest of the section a cardinal $\kappa$ such that $\kappa^{<\kappa} = \kappa$. The following lemma will be useful.

\begin{lemma} \label{unary_partition_lemma}
  Suppose $\varphi$ is a linear order type and $\nu$ is a cardinal. Then $\soafft{\nu}{\varphi} \arrows (\varphi)^1_\nu$.
\end{lemma}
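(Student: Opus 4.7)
Let $P$ be an ordered set of type $\varphi$, so that the lexicographic ordering of $P^\nu$ realises the type $\soafft{\nu}{\varphi}$, and let $c : P^\nu \to \nu$ be a coloring. The goal is to find an ordinal $\alpha < \nu$, a stem $g \in P^\alpha$, and an assignment $P \ni a \mapsto h_a \in P^{\nu \setminus (\alpha+1)}$ such that $c(g \frown \langle a\rangle \frown h_a) = \alpha$ for every $a \in P$. Given such data, the set $\soast{g \frown \langle a\rangle \frown h_a}{a \in P}$ is $\alpha$-monochromatic under $c$, and since any two of its members share $g$ on $[0,\alpha)$ and first differ at coordinate $\alpha$, their lexicographic comparison is dictated by the comparison of the two entries in $P$; the set therefore has order type $\varphi$.

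To see that such $\alpha, g$ exist, I argue by contradiction and negate the statement: for every $\alpha < \nu$ and every $g \in P^\alpha$ there is an $a_{\alpha, g} \in P$ such that the cone $\soast{g \frown \langle a_{\alpha, g}\rangle \frown h}{h \in P^{\nu \setminus (\alpha+1)}}$ avoids color $\alpha$ entirely. Fix a selector $(\alpha, g) \mapsto a_{\alpha, g}$ with this property, using the axiom of choice.

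Now diagonalise: define $f \in P^\nu$ by transfinite recursion on $\alpha < \nu$, setting $f(\alpha) \dfeq a_{\alpha,\, f \restriction \alpha}$ at every stage. Then $f$ extends the initial segment $(f\restriction \alpha) \frown \langle a_{\alpha,\, f \restriction\alpha}\rangle$ for every $\alpha$, and by the defining property of the selector the cone above this segment avoids color $\alpha$; in particular $c(f) \ne \alpha$. As this holds for every $\alpha < \nu$, we conclude $c(f) \notin \nu$, contradicting $c : P^\nu \to \nu$. Hence an admissible pair $\alpha, g$ exists, and the monochromatic copy of $\varphi$ is produced as in the opening paragraph.

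The entire argument is a single diagonalisation; no induction on $\nu$ or on the complexity of $\varphi$ is needed, nor any assumption of the form $\kappa^{<\kappa} = \kappa$. The only point calling for verification is that the suborder extracted above genuinely has order type $\varphi$, and that is immediate once one notes that the common stem $g$ forces $\Delta$ to equal $\alpha$ for every pair of distinct members of the constructed set.
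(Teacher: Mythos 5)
Your proof is correct and is essentially the paper's argument: both rest on the same transfinite diagonalisation, choosing at each stage $\alpha$ an element of $P$ whose cone avoids colour $\alpha$ and deriving a contradiction from the colour of the resulting $f \in \soafft{\nu}{P}$. The only difference is organisational — the paper runs the recursion under the global assumption that no monochromatic copy of $\varphi$ exists and extracts the monochromatic set inside the successor step, whereas you isolate the existence of the pair $(\alpha, g)$ up front — but the mathematics is identical.
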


\begin{proof}
  Let $P$ be an ordered set of type $\varphi$, let $F:\soafft{\nu}{P} \rightarrow \nu$, and suppose for sake of contradiction that there is no
  homogeneous set of order type $\varphi$ in the lexicographic order. By recursion on $\alpha \leqslant \nu$, we will construct
  $f_\alpha \in \soafft{\alpha}{P}$ such that, for all $\alpha < \beta \leqslant \nu$:
  \begin{enumerate}
    \item $f_\beta \restriction \alpha = f_\alpha$;
    \item for all $g \in \soafft{\nu}{P}$ such that $g \restriction (\alpha+1) = f_{\alpha+1}$, $F(g) \not= \alpha$.
  \end{enumerate}
  Suppose $\beta \leqslant \nu$ and we have constructed $f_\alpha$ for all $\alpha < \beta$. If $\beta$ is a limit
  ordinal, then easily $f_\beta = \bigcup_{\alpha < \beta} f_\alpha$ satisfies our requirements. Thus,
  suppose $\beta = \alpha + 1$. Suppose that, for all $a \in P$, there is $g \in \soafft{\nu}{P}$ such
  that $g \restriction \alpha = f_\alpha$, $g(\alpha) = a$, and $F(g) = \alpha$. Choose such a $g_a$ for each
  $a \in P$. Then $\soast{g_a}{a \in P}$ is an $\alpha$-monochromatic set for $F$ of order type
  $\varphi$, contradicting our assumptions. Thus, there is an $a \in P$ such that, for all $g \in \soafft{\nu}{P}$
  such that $g \restriction \alpha = f_\alpha$ and $g(\alpha) = a$, we have $F(g) \ne \alpha$. Choose such an $a$,
  and define $f_\beta$ by $f_\beta \restriction \alpha = f_\alpha$ and $f_\beta(\alpha) = a$.

  At the end of the recursion, we have constructed $f_\nu \in \soafft{\nu}{P}$. Let $\alpha = F(f_\nu)$. Then
  $f_\nu$ contradicts requirement (2) from the construction applied to $\alpha$ and $\nu$. Thus,
  $\soafft{\nu}{\varphi} \arrows (\varphi)^1_\nu$.
\end{proof}

\begin{lemma} \label{product_lemma}
  Suppose $P_0$ and $P_1$ are weakly $\kappa$-scattered linearly ordered sets. Then $\opair{P_0 \times P_1}{\ltl}$ is weakly $\kappa$-scattered.
\end{lemma}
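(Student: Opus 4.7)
The plan is to prove the contrapositive: if $T \subseteq P_0 \times P_1$ is a $\kappa$-saturated suborder under $\ltl$, then either $P_0$ or $P_1$ already contains a $\kappa$-saturated suborder. For each $a \in P_0$ set $T_a \dfeq \{b \in P_1 : \opair{a}{b} \in T\}$, so that $T$ decomposes as $\bigcup_{a \in P_0} \{a\} \times T_a$.

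The heart of the argument is to show that whenever $\card{T_a} \geqslant 2$, the suborder $T_a$ of $P_1$ already contains a $\kappa$-saturated suborder. Given nonempty $A, B \subseteq T_a$ with $\card{A}, \card{B} < \kappa$ and $A <_{P_1} B$, the sets $\{a\} \times A$ and $\{a\} \times B$ are small, nonempty, and satisfy $\{a\} \times A \ltl \{a\} \times B$, so $\kappa$-saturation of $T$ produces some $\opair{a'}{b'} \in T$ strictly between them. A short case analysis on $a'$ versus $a$ forces $a' = a$: if $a <_{P_0} a'$ then $\opair{a'}{b'} \ltl \opair{a}{b}$ fails for any $\opair{a}{b} \in \{a\} \times B$, and symmetrically if $a' <_{P_0} a$. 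Hence $b' \in T_a$ strictly separates $A$ and $B$, so $T_a$ has the required density property for nonempty small sets. Fixing $a_0 <_{T_a} a_1$ (possible since $\card{T_a} \geqslant 2$), the open interval $\opin{a_0}{a_1}{T_a}$ is then genuinely $\kappa$-saturated: the degenerate cases in which $A$ or $B$ is empty are dealt with by substituting $\{a_0\}$ or $\{a_1\}$ as a fictitious endpoint before invoking the density property.

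With this key step in hand, assume $P_1$ is weakly $\kappa$-scattered. Then $\card{T_a} \leqslant 1$ for every $a \in P_0$, so the first-coordinate projection is an order-isomorphism of $T$ onto a suborder of $P_0$; in particular $P_0$ contains a $\kappa$-saturated suborder, contradicting the assumption that $P_0$ is also weakly $\kappa$-scattered. Thus no such $T$ exists, which proves the lemma. The main technical care lies in the separator analysis above: correctly checking that the $\kappa$-saturation witness for $T$ must land in the fibre $\{a\} \times T_a$, and then promoting the resulting density property of $T_a$ into genuine $\kappa$-saturation on a carved-out open interval. Everything else is straightforward bookkeeping.
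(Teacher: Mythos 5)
Your proof is correct and follows essentially the same route as the paper's: the same fibre decomposition over the first coordinate, the same dichotomy between all-singleton fibres and a fibre with at least two points, and the same endpoint trick to pin down the first coordinate of the separating element. The only difference is one of direction --- the paper lifts a failure of saturation from the relevant factor up into $P_0 \times P_1$ to contradict saturation of the subset, whereas you push saturation of the subset down into a factor (and, in the singleton-fibre case, observe that the projection is an order isomorphism) --- which is just the contrapositive of the same argument.
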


\begin{proof}
  Suppose for sake of contradiction that $C \subseteq P_0 \times P_1$ is $\kappa$-saturated. Let
  \[
    T_0 = \soast{a \in P_0}{\mbox{there is }b \in P_1 \mbox{ such that }\opair{a}{b} \in C}.
  \]
  For $a \in T_0$,
  let $T_{1,a} = \soast{b \in P_1}{\opair{a}{b} \in C}$. The proof now splits into two cases.

  \textbf{Case 1: for all $a \in T_0$, $\card{T_{1,a}} = 1$.}
  In this case, we must have $\card{T_0} > 1$. Thus, choose $a_0 <_{P_0} a_1$, both in $T_0$.
  Since $P_0$ is weakly $\kappa$-scattered, $T_0 \cap \opin{a_0}{a_1}{P_0}$ is not $\kappa$-saturated.
  Thus, there are $A_0, A_1 \subseteq T_0 \cap \opin{a_0}{a_1}{P_0}$ such that $\card{A_0}, \card{A_1} < \kappa$,
  $A_0 <_{P_0} A_1$, and there is no $c \in T_0$ such that $A_0 <_{P_0} c <_{P_0} < A_1$. For
  each $a \in \{a_0, a_1\} \cup A_0 \cup A_1$, let $b_a$ be the unique $b \in P_1$ such that $\opair{a}{b} \in C$.
  For $i < 2$, let $B_i = \soast{\opair{a}{b_a}}{a \in \{a_i\} \cup A_i}$. Then $B_i \subseteq C$, $\card{B_i} < \kappa$,
  $B_0 \ltl B_1$, and there is no $c \in C$ such that $B_0 \ltl c \ltl B_1$,
  contradicting the assumption that $C$ is $\kappa$-saturated.

  \textbf{Case 2: there is $a \in T_0$ such that $\card{T_{1,a}} > 1$.}
  Choose such an $a \in T_0$, and fix $b_0 <_{P_1} b_1$, both in $T_{1,a}$.
  Since $P_1$ is weakly $\kappa$-scattered, $T_{1,a} \cap \opin{b_0}{b_1}{P_1}$ is not
  $\kappa$-saturated. Thus, there are $B_0, B_1 \subseteq T_{1,a} \cap \opin{b_0}{b_1}{P_1}$ such that
  $\card{B_0}, \card{B_1} < \kappa$ and $B_0 <_{P_1} B_1$, and there is no $c \in T_{1,a}$ such that
  $B_0 <_{P_1} c <_{P_1} B_1$. For $i < 2$, let $A_i = \soast{\opair{a}{b}}{b \in \{b_i\} \cup B_i}$.
  As in Case 1, $A_0$ and $A_1$ contradict the assumption that $C$ is $\kappa$-saturated.
\end{proof}

\begin{lemma} \label{exponent_lemma}
  Suppose that $\beta < \kappa$ and, for all $\alpha < \beta$, $\varphi_\alpha$ is weakly
  $\kappa$-scattered. Then $\prod_{\alpha < \beta} \varphi_\alpha$ is weakly $\kappa$-scattered.
\end{lemma}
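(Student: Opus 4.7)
My plan is a proof by contradiction: fix, for each $\alpha < \beta$, a linear order $P_\alpha$ of type $\varphi_\alpha$ and realize an order of type $\prod_{\alpha<\beta}\varphi_\alpha$ as the lex\-icographic product $P$ of the $P_\alpha$; I will assume $C \subseteq P$ is a $\kappa$-saturated sub-order and, by transfinite recursion, construct an explicit failure of $\kappa$-saturation. Concretely, I build pairs $\opair{l_\xi}{r_\xi}$ of elements of $C$ with $l_\xi \ltl r_\xi$ such that the sets $\soast{c \in C}{l_\xi \ltl c \ltl r_\xi}$ strictly shrink with $\xi$ while the ordinals $n_\xi \dfeq \Delta(l_\xi, r_\xi)$ strictly increase in $[0,\beta)$, continuing up to some stage $\xi^* \leq \beta$ at which $\sup_{\xi < \xi^*} n_\xi = \beta$.

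The engine of the construction is a single dichotomy. At any stage, let $D$ be the intersection over all $\xi'$ already defined of $\soast{c \in C}{l_{\xi'} \ltl c \ltl r_{\xi'}}$. A short argument using $\kappa$-saturation of $C$ (applied to the accumulated endpoints, whose sizes are at most $|\xi| < \kappa$) shows that $D$ is nonempty and $\kappa$-saturated, and all elements of $D$ agree on some common initial segment $[0, m)$ with $m < \beta$ (namely $m = n_\xi$ at a successor stage, and $m = \sup_{\xi'<\xi} n_{\xi'}$ at a limit stage). Set $T \dfeq \soast{c(m)}{c \in D} \subseteq P_m$. Either (a) some $v \in T$ has at least two pre-images in $D$, in which case these pre-images agree on $[0, m+1)$ and so have $\Delta$ strictly greater than $m$, supplying the next pair; or (b) the projection $c \mapsto c(m)$ is an order-isomorphism of $D$ with $T$, making $T$ a $\kappa$-saturated sub-order of $P_m$ and contradicting the weak $\kappa$-scatteredness of $\varphi_m$. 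Hence alternative (a) must obtain, and the recursion proceeds with a strictly larger $n_\xi$.

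Once $\sup_{\xi<\xi^*} n_\xi = \beta$, I apply $\kappa$-saturation of $C$ to $A \dfeq \soast{l_\xi}{\xi < \xi^*}$ and $B \dfeq \soast{r_\xi}{\xi < \xi^*}$ (of cardinality at most $\beta < \kappa$) to obtain some $c \in C$ with $A \ltl c \ltl B$. A direct check shows any such $c$ must satisfy $c \restriction n_\xi = l_\xi \restriction n_\xi$ for every $\xi < \xi^*$, since otherwise $c$ would differ from $l_\xi$ at some coordinate $m < n_\xi$ where $l_\xi$ and $r_\xi$ agree, forcing $c$ outside $[l_\xi, r_\xi]$. Because $\sup_{\xi<\xi^*} n_\xi = \beta$, these restrictions determine $c$ completely as a single element $h^* \in C$. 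A second application of $\kappa$-saturation, this time to $A \cup \{h^*\}$ and $B$, then yields $c' \in C$ with $h^* \ltl c'$ and $A \ltl c' \ltl B$; the same uniqueness argument forces $c' = h^*$, which together with $h^* \ltl c'$ is the desired contradiction.

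The step I expect to cause the most trouble is the limit stage of the recursion: both securing nonemptiness of $D$ and applying the fiber dichotomy at the supremum coordinate hinge on $|\xi| < \kappa$, which is guaranteed by the hypothesis $\beta < \kappa$. This hypothesis is used throughout to keep the accumulated endpoint sets small enough to trigger $\kappa$-saturation, and the weak $\kappa$-scatteredness of each $\varphi_\alpha$ is invoked precisely to rule out the ``singleton fiber'' alternative (b) at every coordinate visited.
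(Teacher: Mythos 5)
Your argument is correct, but it takes a genuinely different route from the paper's. The paper proceeds by induction on $\beta$: the successor step is delegated to Lemma \ref{product_lemma} (products of two weakly $\kappa$-scattered orders), and at limit stages it runs a trichotomy on the sets $D^{\pm}(f)$ of first-difference coordinates --- if some $f$ has $D^+(f)$ or $D^-(f)$ unbounded, the witnesses together with $\{f\}$ directly violate $\kappa$-saturation, and otherwise Corollary \ref{saturated_cor} stabilises the bound on a $\kappa$-saturated subset whose restriction to an initial segment of coordinates contradicts the inductive hypothesis. You instead give a single non-inductive transfinite recursion: nested intervals of the putative saturated suborder whose splitting coordinates $n_\xi$ climb strictly through $\beta$, with weak $\kappa$-scatteredness of the factor $\varphi_m$ ruling out the injective-fibre alternative at each coordinate $m$, and a final two-fold application of saturation to contradict the uniqueness of the point trapped in the intersection. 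The pieces correspond loosely (your endgame plays the role of the paper's Cases 1 and 2 --- a point of a saturated order cannot be pinned down by small sets on either side --- and your fibre dichotomy replaces Case 3 plus the induction), but your proof is self-contained, needs neither Lemma \ref{product_lemma} nor Corollary \ref{saturated_cor}, and treats successor and limit $\beta$ uniformly; the price is a somewhat more delicate recursion, including the bookkeeping needed to see that at the top coordinate of a successor $\beta$ alternative (a) is vacuous and the contradiction comes from (b) rather than from the endgame. Both arguments use $\beta < \kappa$ and the regularity of $\kappa$ in the same essential way, to keep the accumulated endpoint sets below $\kappa$.
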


\begin{proof}
  The proof is by induction on $\beta$. For $\beta \in \{0,1\}$, the Lemma is trivial. For the successor step
  of the induction,
  simply apply the induction hypothesis and Lemma \ref{product_lemma}. Thus, assume $\gamma < \kappa$ is a
  limit ordinal, $\varphi_\alpha$ is a weakly $\kappa$-scattered linear order type for all $\alpha < \gamma$ and,
  for all $\beta < \gamma$, $\prod_{\alpha < \beta} \varphi_\alpha$ is weakly $\kappa$-scattered.
  For all $\alpha < \gamma$, let $\opair{P_\alpha}{<_\alpha}$ be an ordered set of type $\varphi_\alpha$.
For notational
  simplicity, for $\beta \leqslant \gamma$, let $T_\beta = \bigtimes_{\alpha < \beta} P_\alpha$. Suppose for sake of
  contradiction that $R \subseteq T_\gamma$ is $\kappa$-saturated. For $f \in R$,
  let
  \begin{align*}
  D^+(f) \dfeq & \soast{\alpha < \gamma}{\exists g \in R ( \Delta(f,g) = \alpha \wedge f <_R g)};\\
  D^-(f) \dfeq & \soast{\alpha < \gamma}{\exists g \in R ( \Delta(f,g) = \alpha \wedge g <_R f)}.
  \end{align*}
  There are now three cases to consider.

  \textbf{Case 1: there is $f \in R$ such that $D^+(f)$ is unbounded in $\gamma$.}
  For $\alpha \in D^+(f)$, fix a $g_\alpha \in R$ such that $\Delta(f, g_\alpha) = \alpha$
  and $f <_R g$. Let $A = \{f\}$ and $B = \soast{g_\alpha}{\alpha \in D^+(f)}$.
  Then $A, B \subseteq R$, $\card{A}, \card{B} < \kappa$, $A <_R B$, and there is no $h \in R$
  such that $A <_R h <_R B$, contradicting the assumption that $R$ is $\kappa$-saturated.

  \textbf{Case 2: there is $f \in R$ such that $D^-(f)$ is unbounded in $\gamma$.}
  This is symmetric to Case 1.

  \textbf{Case 3: for all $f \in R$, there is $\beta < \gamma$ such that $D^+(f) \cup D^-(f) \subseteq \beta$.}
  For all $f \in R$, choose $\beta_f$ such that $D^+(f) \cup D^-(f) \subseteq \beta_f$. By
  Corollary \ref{saturated_cor}, there is $\beta < \gamma$ and a $\kappa$-saturated $S \subseteq R$
  such that, for all $f \in S$, $\beta_f = \beta$. Then, for all $f,g \in S$,
  $\Delta(f,g) < \beta$. In particular, $f \restriction \beta \ne g \restriction \beta$. Let
  $S_\beta = \soast{f \restriction \beta}{f \in S}$. Then $S_\beta$ is a $\kappa$-saturated
  suborder of $T_\beta$, contradicting the inductive hypothesis that $T_\beta$ is weakly
  $\kappa$-scattered.
\end{proof}

We are now ready to prove the main result of this section.

\begin{proof}[Proof of Theorem \ref{weakly_scattered_thm}]
  For $n < \omega$, we will find a weakly $\kappa$-scattered order of size $\kappa$, $\tau_n$,
  such that $\tau_n \arrows (\varphi, n)^2$. Then $\tau = \sum_{n < \omega} \tau_n$ will be
  as desired.

  We proceed by induction on $n < \omega$. For $n \in \{0,1,2\}$, we may simply set $\tau_n = \varphi$.
  Suppose $2 \leqslant n < \omega$ and $\tau_n$ has been found. Suppose first that $\kappa$ is a
  successor cardinal, say $\kappa = \nu^+$. In this case, let $\tau_{n+1} = \soafft{\nu}{\tau_n}$. If, on
  the other hand, $\kappa$ is a limit ordinal, let $\tau_{n+1} = \sum_{\nu < \kappa} \soafft{\nu}{\tau_n}$, where
  the sum is over all cardinals $\nu < \kappa$. In either case, by Lemma \ref{exponent_lemma},
  $\tau_{n+1}$ is weakly $\kappa$-saturated. Also, by Lemma \ref{unary_partition_lemma}, for all $\nu < \kappa$,
  $\tau_{n+1} \arrows (\tau_n)^1_\nu$. Therefore, by Lemma \ref{pairs_to_singletons_lemma},
  $\tau_{n+1} \arrows (\varphi, n+1)^2$.
\end{proof}

\begin{remark}
  Note that, for the case $\kappa = \nu^+$, for $2 \leqslant n < \omega$, the value for $\tau_n$ obtained
  in the above proof is $\soafft{\nu^{n-2}}{\varphi}$. In particular, we can take $\tau$ to be
  $\soafft{\nu^\omega}{\varphi}$.
\end{remark}

\section{A negative partition relation from a small unbounding number and the stick-principle}\label{cardinal_characteristics}

We define:
\begin{align}
\stick(\kappa) \dfeq \min\soast{\card{X}}{X \subseteq [\kappa^+]^\kappa \wedge \forall y \in [\kappa^+]^{\kappa^+}\exists x \in X \left(x \subseteq y\right)}.
\end{align}
\begin{remark}
$\stick(\kappa)$ is called $\stick(\kappa, \kappa^+)$ by Brendle in \cite{006B1}, general\iz ing $\stick$, which was introduced by Fuchino, Shelah and Soukup in \cite{997FSS0} and corresponds to our notion of $\stick(\aleph_0)$. Fuchino, Shelah and Soukup had in turn general\iz ed the stick principle $\stickp$ which can in hindsight be read as a shorthand for $\stick = \aleph_1$. The stick
principle was introduced by Broverman, Ginsburg, Kunen \& Tall in \cite[page 1311]{978BGKT0}.
$\stick(\kappa)$ as defined here should not be confused with $\stick_\lambda$ as defined in \cite{997FSS0} or $\stick_\delta$ as defined in \cite{018C0}.
\end{remark}

For an infinite cardinal $\kappa$, we let $\mathfrak{b}_\kappa$ and $\mathfrak{d}_\kappa$ denote, respectively, the unbounding number and the dominating number at $\kappa$ as they are defined by Cummings and Shelah in \cite{995CS0}. Here, $\mathfrak{b}$ and $\mathfrak{d}$ are shorthands for $\mathfrak{b}_{\aleph_0}$ and $\mathfrak{d}_{\aleph_0}$, respectively.

In \cite[Corollary 1, implicitly]{971EH0}, $\omega_1\omega \doesntarrow (\omega_1\omega, 3)^2$ is shown to follow from the Continuum Hypothesis.
In \cite{987T1}, Takahashi shows that the same consequence follows already from $\mathfrak{d} = \stick = \aleph_1$. In \cite{998L0}, the hypothesis $\mathfrak{d} = \stick = \aleph_1$ is weakened further and the result is generalised by Jean Larson, who shows that, for a regular cardinal $\kappa$, the hypothesis $\mathfrak{d}_\kappa = \kappa^+$ implies $\kappa^+\kappa \doesntarrow (\kappa^+\kappa, 3)^2$ . We provide here an improvement in a different direction.

\begin{theorem}
\label{theorem : stick + unbounding => partition}
Suppose that $\kappa$ is regular and $\lambda = \kappa^+ = \mathfrak{b}_\kappa = \stick(\kappa)$. Then $\lambda\kappa \doesntarrow (\lambda\kappa, 3)^2$.
\end{theorem}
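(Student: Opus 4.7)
The plan is to construct a pair-colouring $c\colon [P]^2 \to 2$ on an ordered set $P$ of type $\lambda\kappa$ witnessing the negative relation. Realise $P$ as $\kappa \times \lambda$ under the lexicographic order, so that $P$ consists of $\kappa$ successive ``columns'' each of type $\lambda$. Using $\mathfrak{b}_\kappa = \lambda$, fix an unbounded family $\langle g_\alpha : \alpha < \lambda\rangle$ in $({}^\kappa\kappa, <^*)$; using $\stick(\kappa) = \lambda$, fix a stick family $\langle X_\alpha : \alpha < \lambda\rangle \subseteq [\lambda]^\kappa$, with each $X_\alpha$ enumerated increasingly as $\{\gamma_\alpha(\xi) : \xi < \kappa\}$.

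The colouring will be realised as the graph of a partial function $\pi\colon P \rightharpoonup P$ with $\pi(q) <_P q$ whenever defined: set $c(\{p,q\}) = 1$ iff $p <_P q$ and $\pi(q) = p$, and $0$ otherwise. Triangle-freeness of the $1$-graph is then automatic, since three pairwise $1$-related points $p_1 <_P p_2 <_P p_3$ would force $\pi(p_3)$ to equal both $p_1$ and $p_2$. The value $\pi((\xi,\alpha))$ is to be defined by combining the routing function $g_\alpha$ (to produce a column $\xi' < \xi$) with the stick enumeration $\gamma_\alpha$ (to specify the second coordinate $\gamma_\alpha(\xi')$); a natural first try is $\xi' = g_\alpha(\xi)$ whenever that quantity is strictly less than $\xi$, possibly after adjustment.

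The crux is to show that every suborder $S \subseteq P$ of type $\lambda\kappa$ contains a pair in the graph of $\pi$. A preliminary structural lemma, using only the regularity of $\lambda = \kappa^+$, decomposes $S = \bigsqcup_{\zeta < \kappa} B_\zeta$ into blocks of type $\lambda$ so that each $B_\zeta$ possesses a top column $i_\zeta \in \kappa$ and contains a cofinal subset of the form $\{i_\zeta\}\times C_\zeta$ with $C_\zeta \in [\lambda]^\lambda$, and $\langle i_\zeta : \zeta < \kappa \rangle$ is strictly increasing. Applying $\stick(\kappa) = \lambda$ within each $C_\zeta$, one obtains $\alpha_\zeta < \lambda$ with $X_{\alpha_\zeta} \subseteq C_\zeta$. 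The unboundedness of $\langle g_\alpha\rangle$ is then invoked against a function $g^* \in {}^\kappa\kappa$ built from the sequence $\langle i_\zeta\rangle$, to locate some $\alpha \in \bigcup_\zeta X_{\alpha_\zeta}$ for which $g_\alpha$ hits some $i_{\zeta'}$ with $\zeta' < \zeta$; the stick constraints on $\gamma_\alpha$ then place the second coordinate of $\pi((i_\zeta,\alpha))$ inside $C_{\zeta'}$, producing the required $\pi$-edge inside $S$. The principal obstacle is the simultaneous coordinate matching here: the unbounded family controls only column movement while the stick controls only the level within a column, and calibrating the recipe for $\pi$ so that both components align along some $\pi$-edge within every copy of $\lambda\kappa$ is where the heart of the argument lies.
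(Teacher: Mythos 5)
Your global architecture (columns $\kappa\times\lambda$, applying $\stick(\kappa)$ inside the $\lambda$-sized trace of $S$ on each relevant column, and playing an unbounded family against a function coding the column pattern) matches the paper's, but your choice of colouring scheme is fatally flawed: the $1$-colour graph cannot be the graph of a partial function $\pi$ with $\pi(q)<_P q$. If every point has at most one earlier $1$-neighbour, then every finite induced subgraph of the $1$-graph has at most as many edges as vertices (map each edge $\{q,\pi(q)\}$ injectively to $q$), hence a vertex of degree at most $2$; so every finite subgraph is $3$-colourable and, by the de Bruijn--Erd\H{o}s theorem, $P=I_0\cup I_1\cup I_2$ with each $I_j$ independent. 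Since $\lambda$ is regular, in each column some $I_j$ contains a subset of order type $\lambda$, and since $\kappa$ is regular a single $j$ works for $\kappa$ many columns; the union of those pieces is an independent set of order type $\lambda\kappa$. Thus no colouring of the shape you propose can witness $\lambda\kappa\doesntarrow(\lambda\kappa,3)^2$: the ``automatic'' triangle-freeness is bought at the price of a $1$-graph that is provably too sparse, and the calibration problem you flag at the end cannot be solved within this scheme.

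The paper's construction is denser in exactly the way yours forbids: a vertex at level $\rho$ in column $\iota$ is joined downward to a set $C_{\kappa\rho+\nu}\in[\rho]^{<\kappa}$ of levels in \emph{every} higher column $\nu>\iota$, so it may have up to $\kappa$ many earlier neighbours. Triangle-freeness is then not automatic but is secured by a recursive disjointification: when the elements of $C_{\kappa\rho+\zeta}$ are chosen (as minima of stick sets $d(\cdot)$ reindexed through bijections $g_\gamma:\kappa\leftrightarrow\gamma$), one first subtracts $\bigcup C_{\kappa\vartheta+\zeta}$ over all $\vartheta$ already placed in some $C_{\kappa\rho+\nu}$ with $\nu<\zeta$, and this subtraction is precisely what refutes a putative triangle. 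If you upgrade $\pi$ to a multifunction with fewer than $\kappa$ many values per column in order to have enough edges, you lose automatic triangle-freeness and must reproduce this bookkeeping; that, together with the composition of the stick sets with the bijections needed so that the unbounding argument actually lands on an edge inside $S$, is the real content of the proof and is absent from your proposal.
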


\begin{proof}
 Assume towards a contradiction that the statement of the Theorem were wrong, \ie\ that $\mathfrak{b}_\kappa = \stick(\kappa) = \lambda$ and $\lambda\kappa \longrightarrow (\lambda\kappa, 3)^2$. Let $\stick(\kappa) = \lambda$ be witnessed by a family $D = \soast{d(\rho)}{\rho \in \lambda \setminus \kappa} \subseteq [\lambda]^\kappa$, and let $\mathfrak{b}_\kappa = \lambda$ be witnessed by a family $U = \soast{u_\rho}{\rho < \lambda}$ of increasing sequences of ordinals below $\kappa$ having the property that for all $\{\xi, \rho\}_< \in [\lambda]^2$ the set $\soast{\iota < \kappa}{u_\xi(\iota) \geqslant u_\rho(\iota)}$ has cardinality less than $\kappa$. We may also assume for notational convenience that $d(\rho) \subseteq \rho$ for all $\rho \in \lambda \setminus \kappa$. Finally, let $\seq{g_\gamma}{\kappa \leqslant \gamma < \lambda}$ be a sequence of bijections $g_\gamma : \kappa \longleftrightarrow \gamma$. We are now going to define a graph $E \subseteq [\lambda\kappa]^2$ by transfinite induction on the set of ordinals less than $\lambda$. We are going to have $E = \bigcup_{\rho < \lambda} E_\rho$. In step $\rho$ we will define $E_\rho \dfeq E \cap [\soast{\lambda \nu + \xi}{\xi < \rho \wedge \nu < \kappa}]^2$. The following is going to hold in every step $\rho$ of the induction:
\begin{align}
\label{formula : left upper corner & finiteness}
\forall \xi < \rho \forall \iota, \nu < \kappa\big(\card{\bigsoast{\vartheta < \xi}{\{\lambda \iota + \vartheta, \lambda \nu + \xi\} \in E}} < \kappa \wedge&\\
\forall \vartheta < \xi(\{\lambda \iota + \xi, \lambda \nu + \vartheta\} \in E \Rightarrow \iota < \nu)&\big)\notag
\end{align}
Let $E_0$ be the empty set. For positive limit ordinals $\rho < \lambda$, define $E_\rho \dfeq \bigcup_{\xi < \rho} E_\xi$, and for all ordinals $\rho < \lambda$ let $E_{\rho + 1} \dfeq E_\rho \cup D_\rho$, where $D_\rho$ is constructed as follows.

We inductively define a sequence $\seq{C_{\kappa\rho + \iota}}{\iota < \kappa}$ of sets $C_{\kappa\rho + \iota} \in [\rho]^{< \kappa}$ as follows: Let $\zeta$ be a ordinal less than $\kappa$ and suppose that $C_\xi$ has been defined for all $\xi < \kappa\rho + \zeta$. We now define $C_{\kappa\rho + \zeta}$ by letting
\begin{align}
\label{monster-one-line-definition}
C_{\kappa\rho + \zeta} \dfeq \Bigsoast{\min\Big(d\big(g_{g_\rho(\iota)}(\mu)\big) \Bigsetminus \bigcup_{\vartheta\in\bigcup_{\nu < \zeta} C_{\kappa\rho + \nu}} C_{\kappa\vartheta + \zeta}\Big)}{\iota < \zeta \wedge \mu < u_\rho(\zeta)}
\end{align}
Finally we set $D_\rho \dfeq \bigsoast{\{\lambda \iota + \rho, \lambda \nu + \xi\}}{\{\iota, \nu\} \subseteq \kappa \wedge \iota < \nu \wedge \xi \in C_{\kappa\rho + \nu}}$.
This completes the construction of $E$.

Now suppose that $T = \{\lambda \iota + \rho, \lambda \nu + \xi, \lambda \mu+ \vartheta\} \in [\lambda\kappa]^3$. If $[T]^2 \subseteq E$ then \wlg\ $\iota < \nu < \mu$ and $\vartheta < \xi < \rho$. Then $\xi \in C_{\kappa\rho + \nu}$ while $\vartheta \in C_{\kappa\xi + \mu} \cap C_{\kappa\rho + \mu}$, contradicting $\nu < \mu$ in conjunction with \eqref{monster-one-line-definition}. So by $\lambda\kappa \longrightarrow (\lambda\kappa, 3)^2$ there must be an $X \in [\lambda\kappa]^{\lambda\kappa}$ such that $[X]^2 \subseteq [\lambda\kappa]^2 \setminus E$. Let $A \dfeq \bigsoast{\iota < \kappa}{\card{\soast{\xi < \lambda}{\lambda \iota + \xi \in X}} = \lambda}$ and $\nu \dfeq \min(A)$. Furthermore, for all $\iota \in A$, let $\gamma_\iota \in \lambda \setminus \kappa$ be such that $\lambda \iota + \vartheta \in X$ for all $\vartheta \in d(\gamma_\iota)$. Set $\xi \dfeq \sup_{\iota < \kappa} \gamma_\iota$ and define a sequence of natural numbers $\seq{\sigma_\iota}{\iota < \kappa}$ by setting
$\sigma_\iota \dfeq \min(A \setminus \iota)$. We are going to define a function $f$ from $\kappa$ into itself:
\begin{align}
f : \kappa & \longrightarrow \kappa,\\
& \iota \longmapsto \max\big(\sigma_\iota, g_\xi^{-1}(\gamma_{\sigma_\iota})\big).\notag
\end{align}
Now let $\rho \in \lambda \setminus \xi$ be such that $\lambda \nu + \rho \in X$ and $u_\rho$ is unbounded over $f$. Set $\iota \dfeq g_\rho^{-1}(\xi)$ and let
$\tau \in \kappa \setminus \big(1 + \max(\iota, \nu)\big)$ be such that $u_\rho(\tau) > f(\tau)$. Let $\zeta = \sigma_\tau$. By definition of $f$, we have $\zeta < u_\rho(\tau)$.
Now consider \eqref{monster-one-line-definition} with $\mu \dfeq g_\xi^{-1}(\gamma_{\zeta})$. We have $\iota < \tau \leqslant \zeta = \sigma_\tau \in A$, and hence
\begin{align}
\mu = g_\xi^{-1}(\gamma_\zeta) \leqslant \max\big(\zeta, g_\xi^{-1}(\gamma_\zeta)\big) = \max\big(\sigma_\tau, g_\xi^{-1}(\gamma_{\sigma_\tau})\big) = f(\tau) < u_\rho(\tau) < u_\rho(\zeta).
\end{align}
Finally we have $g_\rho(\iota) = \xi$ and $A \ni \nu < \tau \leqslant \zeta \in A$. Then there is a $\vartheta \in d\big(g_{g_\rho(\iota)}(\mu)\big) = d(\gamma_\zeta)$ with $\vartheta \in C_{\kappa\rho + \zeta}$ so we get that $\{\lambda \nu + \rho, \lambda \zeta + \vartheta\} \in D_\rho \subseteq E_{\rho + 1} \subseteq E$, contradicting
the assumption that $[X]^2 \subseteq [\lambda \kappa]^2 \setminus E$.
\end{proof}

Observe that if $n$ is a natural number, $\iota$ and $\xi$ are ordinals, $\mu$ is a cardinal, $\alpha \in \mu^+ \setminus \mu$ and $E$ is an $(n + 1)$-hypergraph on $\iota\mu$ which witnesses $\iota\mu \doesntarrow (\iota\mu, \xi)^n$, then for any bijection $f : \alpha \leftrightarrow \mu$ witnessing $\alpha < \mu^+$ the $n$-hypergraph
\[
  \bigsoast{\{\iota\gamma_0 + \delta_0, \dots, \iota\gamma_n + \delta_n\}}{\soast{\iota\gamma_k + f(\delta_k)}{k \leqslant n} \in E}
\]
witnesses $\iota\alpha \doesntarrow (\iota\alpha, \xi)^n$. We therefore obtain the following corollary.

\begin{corollary}
\label{corollary : stick + unbounding => partition}
If $\kappa$ is regular and $\lambda = \kappa^+ = \mathfrak{b}_\kappa = \stick(\kappa)$, then $\alpha \doesntarrow (\lambda\kappa, 3)^2$ for all $\alpha < \lambda^2$.
\end{corollary}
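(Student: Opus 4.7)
The plan is to deduce the corollary directly from Theorem \ref{theorem : stick + unbounding => partition} via the transfer observation stated immediately above it. The construction of the witness has been carried out in the theorem; the observation packages the combinatorial transfer needed to propagate the negative relation to all ordinals below $\lambda^2$.

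First I would reduce to showing $\lambda\beta \doesntarrow (\lambda\kappa, 3)^2$ for every $\beta < \lambda$. Any $\alpha < \lambda^2 = \lambda\cdot\lambda$ is an initial segment of some $\lambda\beta$ with $\beta < \lambda$: writing $\alpha = \lambda\beta_0 + \gamma_0$ with $\gamma_0 < \lambda$, one may take $\beta = \beta_0 + 1$. Restricting a witnessing $2$-colouring on $[\lambda\beta]^2$ to $[\alpha]^2$ then witnesses $\alpha \doesntarrow (\lambda\kappa, 3)^2$.

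Next, fix $\beta < \lambda$. If $\beta \leqslant \kappa$, then $\lambda\beta$ is itself an initial segment of $\lambda\kappa$, so $\lambda\beta \doesntarrow (\lambda\kappa, 3)^2$ is immediate from Theorem \ref{theorem : stick + unbounding => partition} by restriction. If $\kappa < \beta < \kappa^+ = \lambda$, then $\card{\beta} = \kappa$; choose a bijection $f : \beta \leftrightarrow \kappa$ and invoke the preceding observation with parameters $\iota := \lambda$, $\mu := \kappa$, $n := 2$, $\xi := 3$, and the observation's ``$\alpha$'' being $\beta$. This transports the graph $E$ on $\lambda\kappa$ given by Theorem \ref{theorem : stick + unbounding => partition} (witnessing $\lambda\kappa \doesntarrow (\lambda\kappa, 3)^2$) to a pulled-back graph $E' \subseteq [\lambda\beta]^2$, defined by $\{\lambda\gamma_0 + \delta_0, \lambda\gamma_1 + \delta_1\} \in E'$ exactly when $\{\lambda f(\gamma_0) + \delta_0, \lambda f(\gamma_1) + \delta_1\} \in E$, which the observation certifies to witness $\lambda\beta \doesntarrow (\lambda\kappa, 3)^2$.

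The main obstacle, from this plan's point of view, is verifying the observation itself---in particular, that an $E'$-independent suborder of $\lambda\beta$ of order type $\lambda\kappa$ would pull back to an $E$-independent suborder of $\lambda\kappa$ of the same order type, contradicting the witness property of $E$. Because $f$ is a bijection, the pull-back map preserves each column's internal order while merely permuting the columns, so edges and non-edges transfer bijectively; the $1$-monochromatic clause is immediate. The subtlety lies in the order-type analysis of how a set of order type $\lambda\kappa$ in $\lambda\beta$ distributes across the column permutation, which is the combinatorial content of the observation.
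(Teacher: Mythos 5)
Your proposal is correct and follows the paper's own route exactly: the paper derives the corollary from Theorem \ref{theorem : stick + unbounding => partition} via the transfer observation stated immediately before it, just as you do, and your reduction of an arbitrary $\alpha < \lambda^2$ to $\lambda\beta$ with $\beta < \lambda$ together with the column-permuting pull-back along a bijection $f : \beta \leftrightarrow \kappa$ is precisely what is intended (note the observation's displayed formula should read $f(\gamma_k)$ rather than $f(\delta_k)$, i.e.\ $f$ acts on the copy indices, as you have it). The order-type analysis you flag as the crux does go through: a $0$-homogeneous $X \subseteq \lambda\beta$ of order type $\lambda\kappa$ must meet $\kappa$ many columns in a set of order type $\lambda$ (since $\lambda = \kappa^+$ is regular, fewer such columns would force $\otp(X) < \lambda\kappa$), and the $f$-images of those column-pieces reassemble into an $E$-independent subset of $\lambda\kappa$ of order type $\lambda\kappa$, contradicting the theorem.
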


In an upcoming paper of William Chen, Shimon Garti and the second author the same hypothesis is shown to imply $\lambda^2 \doesntarrow (\lambda\kappa, 4)^2$ as well.

Recall that $\clubsuit$, introduced in \cite{976O0}, implies $\stick = \aleph_1$. To see that this is the case, consider a sequence witessing the truth of $\clubsuit$. Its range then witnesses the truth of $\stick$. In \cite{997FSS0}, Fuchino, Shelah and Soukup show that $\clubsuit$ is consistent with $\cov(\meagre) = 2^{\aleph_0} = \aleph_2$.  Brendle \cite{017B0} points out that in their model the unbounding number is small as well. As $\cov(\meagre) \leqslant \mathfrak{d}$, we have
\begin{align*}
\ZFC + ``\aleph_1 = \stick = \mathfrak{b} < \mathfrak{d} = \aleph_2" \text{ is consistent,}
\end{align*}
which shows that our result does in fact provide information not given by Larson's result.

Brendle proves in \cite{006B1} that $\clubsuit$ is consistent with $\cov(\zero) = \aleph_2$. In the same paper in a footnote on page 45 he also gives a sketch of how to extract a proof of the consistency of $\clubsuit$ with $\add(\meagre) = \aleph_2$ from \cite{999DS0}. Before all this, Truss, in \cite{983T1}, shows limitations to these pursuits by proving---in effect---that if $\stick = \aleph_1$, then $\min\big(\cov(\meagre),\cov(\zero)\big) = \aleph_1$. %Unfortunately, the proof of the consistency of $\clubsuit + \mathfrak{h} = \aleph_2$ in \cite{011M0} is flawed, \cf\ \cite{017M0}. Hence it is as of now unclear whether Theorem \ref{theorem : stick + unbounding => partition} actually yields a long-term advantage over \cite{998L0} as it is unknown whether or not $\max(\mathfrak{b}, \stick) < \mathfrak{d}$ is consistent. \todo[inline]{Correct this paragraph!}

\section{Scattered linear orders} \label{scattered_section}

In this section, we deal with $\kappa$-scattered linear orders for infinite, regular cardinals $\kappa$. We
first prove a general\iz ation of the Milner-Rado paradox \cite{965MR0} to the class of $\kappa$-scattered
linear orders of size $\kappa$. We call to your attention the classical statement of the paradox.

\begin{paradox}[{\cite[Theorem 5]{965MR0}}] \label{965MR0}
  Suppose $\kappa$ is an infinite cardinal and $\alpha < \kappa^+$. Then $\alpha$ can be written as
  $\bigcup_{n<\omega} X_n$ where, for all $n < \omega$, $\otp(X_n) < \kappa^{n+1}$.
\end{paradox}

A related statement can be found in a paper by Komj\'ath and Shelah:

\begin{lemma}[{\cite[Lemma 1]{003KS0}}] \label{lemma : komjath-shelah}
  Suppose $\varphi$ is a scattered order type and $S$ is a linear order of type $\varphi$.
  Then there is $f : S \rightarrow \omega$ such that, for all $n < \omega$, $f^{-1}(n)$ has no subset of order type
  $(\omega^* + \omega)^n$. Therefore, $\varphi \doesntarrow (\psi)^1_{\aleph_0}$ where
  $\psi = \sum_{n < \omega} (\omega^* + \omega)^n$.%ψ=1+(ω∗+ω)+(ω∗+ω)2···.
\end{lemma}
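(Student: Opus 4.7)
The plan is to prove the first claim by induction on the Hausdorff rank of $\varphi$ afforded by Theorem~\ref{hausdorff_structure_thm} (applied with $\kappa = \aleph_0$), and then to deduce the partition relation as an immediate corollary. In the base case $\varphi \in \mathcal{BL}_{\aleph_0}$, so $\varphi$ is a well-order or anti-well-order; setting $f \equiv 1$ works, since $f^{-1}(0) = \emptyset$ vacuously avoids order type $1 = (\omega^* + \omega)^0$, while $f^{-1}(1) = S$ contains no copy of $\omega^* + \omega$.

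For the inductive step, write $S = \sum_{i \in I} S_i$ with $I \in \mathcal{BL}_{\aleph_0}$ and each $S_i$ of strictly smaller rank, and let $f_i : S_i \to \omega$ be the colouring provided by the induction hypothesis. Define $f : S \to \omega$ by $f(x) = f_i(x) + 1$ whenever $x \in S_i$, so that $f^{-1}(0) = \emptyset$ and $f^{-1}(n) = \bigcup_i f_i^{-1}(n-1)$ for $n \geq 1$. Suppose towards a contradiction that $X \subseteq f^{-1}(n)$ has type $(\omega^* + \omega)^n$ for some $n \geq 1$, and set $X_i = X \cap S_i \subseteq f_i^{-1}(n-1)$. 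Because $I$ is a well-order or an anti-well-order, the set $\{i \in I : X_i \neq \emptyset\}$ admits either a minimum or a maximum $i_0$; accordingly, $X_{i_0}$ is a non-empty convex initial segment or a non-empty convex final segment of $X$.

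The proof is then completed by the following sub-claim: every non-empty convex initial (equivalently, by self-duality of $(\omega^* + \omega)^n$, final) segment of $(\omega^* + \omega)^n$ contains a copy of $(\omega^* + \omega)^{n-1}$. Identify $(\omega^* + \omega)^n$ with $\mathbb{Z}^n$ under the lexicographic order in which the first coordinate is most significant. Any non-empty down-closed $D \subseteq \mathbb{Z}^n$ contains some tuple $\vec{a} = (a_1, \ldots, a_n)$, and then by down-closure every $\vec{y} \in \mathbb{Z}^n$ with first coordinate $a_1 - 1$ lies in $D$; hence $D \supseteq \{a_1 - 1\} \times \mathbb{Z}^{n-1}$, a copy of $(\omega^* + \omega)^{n-1}$. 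Applied to $X_{i_0}$, the sub-claim contradicts the induction hypothesis $X_{i_0} \subseteq f_{i_0}^{-1}(n-1)$.

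Finally, any monochromatic suborder of colour $n$ under $f$ lies inside $f^{-1}(n)$, so it cannot contain $(\omega^* + \omega)^n$; since $\psi = \sum_{m < \omega} (\omega^* + \omega)^m$ contains $(\omega^* + \omega)^n$ as a summand, no monochromatic suborder has type $\psi$, yielding $\varphi \doesntarrow (\psi)^1_{\aleph_0}$. The main obstacle is securing the sub-claim on initial segments of $(\omega^* + \omega)^n$; once it is in hand, the induction and the final deduction are routine.
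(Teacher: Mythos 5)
Your proof is correct. Note that the paper does not actually prove this lemma --- it is quoted from Komj\'ath--Shelah with a citation --- so there is no in-paper argument to compare against; but your induction on the Hausdorff rank, with the colour shift $f(x)=f_i(x)+1$ and the observation that any nonempty initial or final segment of a set of type $(\omega^*+\omega)^n$ contains a copy of $(\omega^*+\omega)^{n-1}$, is the standard route and all the steps check out: the base case $\mathcal{BL}_{\aleph_0}$ is covered since finite types are well-orders, the index set of the sum is always a well-order or anti-well-order so that $\{i: X_i\neq\emptyset\}$ has a minimum or maximum, and $(\omega^*+\omega)^n$ is indeed self-dual. It is worth noting that the paper's own generalization of this lemma, Theorem \ref{milner_rado_generalization}, takes a slightly different route for sums: there the colours are combined via a pairing function $\pi(m,n)\geqslant m+n+1$, which forces a genuinely ``multiplicative'' decomposition argument (the auxiliary order $Q$ and the partition $\{Q_0,Q_1\}$) to rule out $(\kappa\kappa^*)^i$ in $g^{-1}(i)$; your shift-by-one only has to gain a single exponent per level of the hierarchy, which is why the much simpler initial-segment observation suffices. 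The trade-off is that the paper's version simultaneously controls several order types ($\mu^n$, $(\mu^n)^*$, $(\kappa\kappa^*)^n$, $(\kappa^*\kappa)^n$) with one colouring, whereas your argument is tailored to the single family $(\omega^*+\omega)^n$.
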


Fix infinite cardinals $\kappa$ and $\mu$ such that $\cof(\mu) \geqslant \kappa$. Let $\mathcal{B}_{\kappa, \mu}$ be the class of linear order types $\varphi$ such that
either $\card{\varphi} < \kappa$ or $\varphi$ is a well-order or anti-well-order of size $\leqslant \mu$. The following
structure theorem for $\kappa$-scattered linear orders of size at most $\mu$ follows from Theorem \ref{hausdorff_structure_thm}.

\begin{theorem}[{\cite[Corollary 3.11]{012ABCDT0}}] \label{small_hausdorff}
  The class of $\kappa$-scattered linear orders of size at most $\mu$ is the smallest class of orders containing
  $\mathcal{B}_{\kappa, \mu}$ which is closed under lexicographic sums with index set in $\mathcal{B}_{\kappa, \mu}$.
\end{theorem}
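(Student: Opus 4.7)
The plan is to prove both inclusions and thereby characterize the smallest class. Let $\mathcal{C}$ denote the smallest class of linear order types containing $\mathcal{B}_{\kappa,\mu}$ which is closed under lexicographic sums with index set in $\mathcal{B}_{\kappa,\mu}$, and let $\mathcal{K}$ denote the class of $\kappa$-scattered linear orders of size at most $\mu$. Note first that $\mathcal{B}_{\kappa,\mu} \subseteq \mathcal{BL}_\kappa$, so Theorem \ref{hausdorff_structure_thm} is immediately applicable. The natural approach is to use that theorem to prove $\mathcal{K} \subseteq \mathcal{C}$ by induction on the stratification $\soast{\mathcal{L}_\alpha}{\alpha \in \mathrm{On}}$ already introduced in Section \ref{def_section}, while $\mathcal{C} \subseteq \mathcal{K}$ follows by induction on the construction of $\mathcal{C}$.

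For $\mathcal{C} \subseteq \mathcal{K}$, the base case is immediate: order types of cardinality less than $\kappa$ are vacuously $\kappa$-scattered, while well-orders and anti-well-orders are $\kappa$-scattered by Theorem \ref{hausdorff_structure_thm}, and in all cases the size is at most $\mu$. For the inductive step, given a lexicographic sum $\sum_{a \in \tau}\varphi_a$ with $\tau \in \mathcal{B}_{\kappa,\mu}$ and each $\varphi_a \in \mathcal{K}$, the sum is $\kappa$-scattered by Theorem \ref{hausdorff_structure_thm}; its cardinality is at most $\card\tau \cdot \mu$, which equals $\mu$ since either $\card\tau < \kappa \leqslant \mu$ or $\card\tau \leqslant \mu$ directly from $\tau \in \mathcal{B}_{\kappa,\mu}$ (here the hypothesis $\cof(\mu) \geqslant \kappa$ ensures $\mu \geqslant \kappa$, hence $\mu$ is infinite and absorbs the factor $\card\tau$).

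For $\mathcal{K} \subseteq \mathcal{C}$, fix $\varphi \in \mathcal{K}$; by Theorem \ref{hausdorff_structure_thm} there is a least $\alpha$ with $\varphi \in \mathcal{L}_\alpha$, and I will induct on $\alpha$. If $\alpha = 0$, then $\varphi \in \mathcal{BL}_\kappa$ and $\card\varphi \leqslant \mu$, so $\varphi \in \mathcal{B}_{\kappa,\mu} \subseteq \mathcal{C}$. The limit case is trivial. For the successor case, write $\varphi = \sum_{a\in\tau}\varphi_a$ with $\tau \in \mathcal{BL}_\kappa$ and each $\varphi_a \in \mathcal{L}_\beta$ for some $\beta < \alpha$. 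Each $\varphi_a$ embeds into $\varphi$, hence has size at most $\mu$, so inductively $\varphi_a \in \mathcal{C}$. It remains to arrange that the index set lies in $\mathcal{B}_{\kappa,\mu}$ rather than merely in $\mathcal{BL}_\kappa$.

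The one genuine obstacle is that a decomposition provided by Theorem \ref{hausdorff_structure_thm} may a priori have an index $\tau$ that is a well-order or anti-well-order of size exceeding $\mu$, even when $\card\varphi \leqslant \mu$ --- this can happen only via empty summands. The remedy is straightforward: replace $\tau$ by the suborder $\tau' \dfeq \soast{a \in \tau}{\varphi_a \neq \emptyset}$. Since $\tau'$ is a subset of a well-order (respectively anti-well-order) it remains a well-order (respectively anti-well-order), and the sum over $\tau'$ has the same order type $\varphi$; moreover $\card{\tau'} \leqslant \card\varphi \leqslant \mu$, so $\tau' \in \mathcal{B}_{\kappa,\mu}$. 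If on the other hand $\card\tau < \kappa$ to begin with, then $\tau \in \mathcal{B}_{\kappa,\mu}$ already. In either subcase $\varphi$ is exhibited as a lexicographic sum with index in $\mathcal{B}_{\kappa,\mu}$ of members of $\mathcal{C}$, so $\varphi \in \mathcal{C}$, completing the induction.
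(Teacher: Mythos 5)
Your proof is correct. The paper does not actually prove this statement---it cites it as \cite[Corollary 3.11]{012ABCDT0} and merely remarks that it ``follows from Theorem \ref{hausdorff_structure_thm}''---and your two-inclusion derivation from Theorem \ref{hausdorff_structure_thm}, including the key observation that an oversized well-ordered index set can be shrunk to the suborder of indices with nonempty summands, is precisely the routine argument that remark presupposes.
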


The following Lemma has a straightforward proof and can be found, for example, in \cite[Lemma 5]{014L1}.

\begin{lemma} \label{order_type_prop}
  Suppose $\kappa$ is an infinite cardinal, $\nu < \cof(\kappa)$, and $m < \omega$. Suppose that, for each $\zeta < \nu$,
  $X_\zeta$ is a set of ordinals such that $\otp(X_\zeta) < \kappa^m$. Let $X = \bigcup_{\zeta < \nu} X_\zeta$.
  Then $\otp(X) < \kappa^m$.
\end{lemma}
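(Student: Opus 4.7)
My plan is to prove the lemma by induction on $m$. The base case $m = 0$ is immediate since $\kappa^0 = 1$ forces each $X_\zeta$ to be empty, so $X$ is empty and $\otp(X) = 0 < 1$.

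For the inductive step from $m$ to $m+1$, I would argue by contradiction. Assume $\otp(X_\zeta) < \kappa^{m+1}$ for every $\zeta < \nu$ while $\otp(X) \geqslant \kappa^{m+1}$. Using $\kappa^{m+1} = \kappa^m \cdot \kappa$, pick $Y \subseteq X$ of order type $\kappa^{m+1}$ and split it into $\kappa$ consecutive blocks $\seq{Y_j}{j < \kappa}$ of order type $\kappa^m$ each. For every $j < \kappa$, the family $\seq{Y_j \cap X_\zeta}{\zeta < \nu}$ covers $Y_j$; the inductive hypothesis at level $m$ applied contrapositively then yields some $\zeta(j) < \nu$ with $\otp(Y_j \cap X_{\zeta(j)}) \geqslant \kappa^m$, since otherwise $\otp(Y_j)$ would itself be $< \kappa^m$.

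Now I invoke pigeonhole: since $\nu < \cof(\kappa)$, a union of $\nu$ sets each of cardinality less than $\kappa$ cannot exhaust $\kappa$, so there is $\zeta^{\ast} < \nu$ such that $J \dfeq \soast{j < \kappa}{\zeta(j) = \zeta^{\ast}}$ has cardinality $\kappa$. Any cardinality-$\kappa$ subset of $\kappa$ has order type exactly $\kappa$, because its order type is an ordinal that is simultaneously $\leqslant \kappa$ (as a suborder of $\kappa$) and has cardinality $\kappa$. Concatenating the blocks $(Y_j \cap X_{\zeta^{\ast}})_{j \in J}$ in their natural order inside $X_{\zeta^{\ast}}$ therefore produces a subset of order type at least $\kappa^m \cdot \otp(J) = \kappa^m \cdot \kappa = \kappa^{m+1}$, contradicting $\otp(X_{\zeta^{\ast}}) < \kappa^{m+1}$.

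The subtlety I would watch most carefully is the strength of the pigeonhole: it is essential to produce a $J$ of full cardinality $\kappa$, forcing $\otp(J) = \kappa$, rather than merely a cofinal $J$. For singular $\kappa$ a cofinal subset can have order type only $\cof(\kappa)$, and $\kappa^m \cdot \cof(\kappa)$ would fall short of $\kappa^{m+1}$. Fortunately the cardinal-style pigeonhole supplied by $\nu < \cof(\kappa)$ (together with the fact, used already in the $m=1$ case, that $\nu$-many sets of cardinality $<\kappa$ have union of cardinality $<\kappa$) is precisely what is needed.
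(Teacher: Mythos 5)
Your proof is correct. The paper does not actually prove Lemma \ref{order_type_prop} --- it declares the proof straightforward and cites \cite[Lemma 5]{014L1} --- but your induction on $m$ combined with the pigeonhole argument (using $\nu < \cof(\kappa)$ to extract a single $\zeta^{\ast}$ whose blocks occupy a set $J$ of full cardinality $\kappa$, hence of order type exactly $\kappa$) is the standard argument for this fact, and your closing remark correctly isolates the one point where singular $\kappa$ could cause trouble.
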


A close relative is the following Lemma.

\begin{lemma}
\label{lemma : pigeonhole for order-types}
  Suppose $\alpha$ is an indecomposable ordinal, $\nu < \cof(\alpha)$, and $m < \omega$. Suppose that $X$ is an ordered set,
  $X = \bigcup_{\zeta < \nu} X_\zeta$, and, for each $\zeta < \nu$,
  $\otp(X_\zeta) \not\geqslant (\alpha\alpha^*)^m$.
  Then $\otp(X) \not\geqslant (\alpha\alpha^*)^m$.
\end{lemma}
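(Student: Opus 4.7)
The plan is to proceed by induction on $m$, with the engine being the classical partition relation $\alpha \arrows (\alpha)^1_\nu$ for additively indecomposable $\alpha$ and $\nu < \cof(\alpha)$: any covering of $\alpha$ by fewer than $\cof(\alpha)$ pieces must have some piece of order type $\alpha$. This is the direct analogue of Lemma \ref{order_type_prop} at level $m=1$, and it will be applied at two different nested levels inside the inductive step.

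The base case $m = 0$ is trivial since $(\alpha\alpha^*)^0 = 1$ forces each $X_\zeta$, and hence $X$, to be empty. For the inductive step, assume the claim for $m$ and suppose for contradiction that $X$ contains a suborder $Y$ of type $(\alpha\alpha^*)^{m+1}$. By associativity of the lex product, decompose $Y = \sum_{y \in (\alpha\alpha^*)^m} Y_y$ with $\otp(Y_y) = \alpha\alpha^*$, and further $Y_y = \sum_{s \in \alpha^*} Y_{y,s}$ with $\otp(Y_{y,s}) = \alpha$.

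Now for each fixed $y$, apply the engine twice. First, for each $s$, the covering $Y_{y,s} = \bigcup_{\zeta<\nu}(Y_{y,s} \cap X_\zeta)$ yields some $\zeta(y,s) < \nu$ with $\otp(Y_{y,s} \cap X_{\zeta(y,s)}) = \alpha$. Second, the function $s \mapsto \zeta(y,s)$, regarded as a map from $\alpha$ to $\nu$ after reversing $\alpha^*$, must be constant on some subset $S_y \subseteq \alpha^*$ of type $\alpha^*$; let $\eta(y) < \nu$ be that constant value. Then $Z_y \dfeq \bigcup_{s \in S_y}(Y_{y,s} \cap X_{\eta(y)})$ is an ordered set of type $\alpha\alpha^*$ contained in $X_{\eta(y)}$.

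Finally, apply the inductive hypothesis to the ordered set $(\alpha\alpha^*)^m$, which trivially contains a copy of itself, partitioned into the $\nu$ classes $\eta^{-1}(\zeta)$ by $y \mapsto \eta(y)$: there is some $\eta^* < \nu$ and a subset $T \subseteq \eta^{-1}(\eta^*)$ of type $(\alpha\alpha^*)^m$. Since $Z_y \subseteq X_{\eta^*}$ for each $y \in T$, the union $\bigcup_{y \in T} Z_y$ is contained in $X_{\eta^*}$ and has order type $\sum_{y \in T} \alpha\alpha^* = (\alpha\alpha^*)^{m+1}$, contradicting $\otp(X_{\eta^*}) \not\geqslant (\alpha\alpha^*)^{m+1}$. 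The main subtlety is simply keeping the outer/inner bookkeeping of the iterated lex product straight; once set up, the proof is a direct two-level elaboration of the argument for Lemma \ref{order_type_prop}, with the role of $\kappa$-regularity played by the indecomposability of $\alpha$.
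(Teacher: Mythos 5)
Your argument is essentially the paper's own proof: the same induction on $m$, driven by the same two ingredients, namely the one-dimensional pigeonhole for $\alpha$ (applied twice, once per block and once along $\alpha^*$, to get one for $\alpha\alpha^*$) and the inductive hypothesis used as a pigeonhole for $(\alpha\alpha^*)^m$. The only structural difference is that you peel off the \emph{least} significant factor, so the blocks are copies of $\alpha\alpha^*$ indexed by $(\alpha\alpha^*)^m$ and the inductive hypothesis is applied to the abstract index set, whereas the paper peels off the \emph{most} significant factor, applies the inductive hypothesis inside each block of type $(\alpha\alpha^*)^m$, and then performs the $\alpha\alpha^*$-pigeonhole on the outer index. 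The two arrangements are interchangeable, and your bookkeeping of the iterated lexicographic product is correct.

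One caveat, which you share with the paper's proof (where it is concealed in the phrase ``considering both the indecomposability and the cofinality of $\alpha$ twice''): the ``engine'' $\alpha \arrows (\alpha)^1_\nu$ is \emph{not} a classical theorem for every additively indecomposable $\alpha$ and every $\nu < \cof(\alpha)$. For example, $\alpha = \omega_1^{\omega_1}$ is additively indecomposable with $\cof(\alpha) = \omega_1$, yet by the Milner--Rado paradox $\alpha = \bigcup_{n<\omega} X_n$ with $\otp(X_n) < \omega_1^{n+1} < \alpha$, so $\alpha \doesntarrow (\alpha)^1_{\aleph_0}$; splitting each $\alpha$-block of a set of type $\alpha\alpha^*$ in this way even refutes the $m=1$ instance of the lemma as literally stated, since any well-ordered subset meets only finitely many blocks. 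So your appeal to this relation needs the additional hypothesis that $\alpha \arrows (\alpha)^1_\nu$ actually holds --- which it does in the only case where the lemma is invoked, namely $\alpha = \kappa$ a regular cardinal in Theorem \ref{milner_rado_generalization}. Relative to the paper's proof this is an inherited defect of the lemma's hypotheses rather than a gap in your argument.
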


\begin{proof}
Suppose towards a contradiction that the Lemma were false, and let $m$ be the largest natural number for which it holds. So $\otp(X) \geqslant (\alpha\alpha^*)^{1 + m}$. Let $Y$ be a set of order type $\alpha\alpha^*$ and let $f : \soafft{1 + m}{Y} \longrightarrow X$ be an embedding preserving the lexicographic ordering on $\soafft{1 + m}{Y}$. For $y \in Y$, define
\begin{align}
Z_y \dfeq \soast{x \in X}{\exists \vec{y} \in \soafft{m}{Y} (f(y, \vec{y}) \leqslant x) \wedge \exists \vec{y} \in \soafft{m}{Y} ( x \leqslant f(y, \vec{y}))},
\end{align}
and let $Z_{y, \zeta} \dfeq Z_y \cap X_\zeta$.

Note that for any $y, z\in Y$ with $y < z$ and any $x_0 \in Z_y$ and $x_1 \in Z_z$ we have $x_0 < x_1$.
Clearly, for every $y \in Y$ we have $Z_y = \bigcup_{\zeta < \nu} Z_{y, \zeta}$, and $f \upharpoonright (\{y\} \times \soafft{m}{Y)}$ yields an order-preserving embedding of $\soafft{m}{Y}$ into $Z_y$. Now we may use the inductive hypothesis and conclude that for every $y \in Y$ there is a $\zeta_y < \nu$ and an order-preserving embedding $f_y : \soafft{m}{Y} \longrightarrow Z_{y, \zeta_y}$.
Considering both the indecomposability and the cofinality of $\alpha$ twice, we conclude that there is an $A \in [Y]^{\alpha\alpha^*}$ and a $\zeta < \nu$ such that $\zeta_a = \zeta$ for all $a \in A$. Then the function
\begin{align}
g : A \times \soafft{m}{Y} & \longrightarrow X\\
\opair{a}{\vec{y}} & \longmapsto f_a(\vec{y}) \notag
\end{align}
is an order-preserving embedding of a set of order type $(\alpha\alpha^*)^{1 + m}$ into $X_\zeta$, a contradiction.
\end{proof}

An easy variation yields the fact that Lemma \ref{lemma : pigeonhole for order-types} remains true if
all instances of $\alpha \alpha^*$ are replaced by $\alpha^* \alpha$.
We are now ready to state and prove our general\iz ation of Theorem \ref{965MR0}.

\begin{theorem} \label{milner_rado_generalization}
 Let $\kappa, \mu$ be infinite regular cardinals such that $\kappa \leqslant \mu$, and suppose $\varphi$ is a
 $\opair{\kappa}{\max(\aleph_1, \kappa)}$-scattered linear order type of size at most $\mu$.
 Then every order $P$ of type $\varphi$ can be written as a union $P = \bigcup_{n < \omega} P_n$
 such that there is no $n < \omega$ for which $P_n$ has a suborder of type
 $\mu^n, {(\mu^n)}^*$, $(\kappa\kappa^*)^n$, or $(\kappa^*\kappa)^n$.
\end{theorem}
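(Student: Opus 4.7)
The plan is to proceed in two stages. First, we reduce the $\opair{\kappa}{\max(\aleph_1,\kappa)}$-scattered case to the $\kappa$-scattered one. Second, for $\kappa$-scattered orders of size at most $\mu$, we induct on the Hausdorff complexity supplied by Theorem~\ref{small_hausdorff}. Throughout, the decomposition is arranged so that $P_0=\emptyset$, since a subset of type $\mu^0=(\kappa\kappa^*)^0=1$ is just a singleton.

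For the reduction, fix a decomposition $P=\bigcup_{\zeta<\nu}Q_\zeta$ witnessing $\opair{\kappa}{\max(\aleph_1,\kappa)}$-scatteredness, so $\nu<\max(\aleph_1,\kappa)$ and each $Q_\zeta$ is $\kappa$-scattered. Granting the second stage, write $Q_\zeta=\bigcup_{n<\omega}Q_{\zeta,n}$ with no $Q_{\zeta,n}$ containing any of the four forbidden subtypes, and set $P_n\dfeq\bigcup_{\zeta<\nu}Q_{\zeta,n}$. A well-ordered $W\subseteq P_n$ of type $\mu^n$ would decompose as $W=\bigcup_\zeta(W\cap Q_{\zeta,n})$, a union of fewer than $\cof(\mu^n)=\mu$ well-ordered sets each of ordinal type $<\mu^n$, contradicting Lemma~\ref{order_type_prop}; the case $(\mu^n)^*$ is dual. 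For $(\kappa\kappa^*)^n$ and $(\kappa^*\kappa)^n$ the analogous contradiction is delivered by Lemma~\ref{lemma : pigeonhole for order-types} using $\nu<\cof(\kappa)$.

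The heart of the argument is the second stage, by induction along the construction of $Q$ from $\mathcal{B}_{\kappa,\mu}$ via Theorem~\ref{small_hausdorff}. For the base case $Q\in\mathcal{B}_{\kappa,\mu}$: if $\card{Q}<\kappa$, simply take $P_1\dfeq Q$ and all other pieces empty, since then no subset of $P_1$ has size $\geqslant\kappa$; if $Q$ is a well-order of length at most $\mu$, apply the classical Milner--Rado paradox (Paradox~\ref{965MR0}) to obtain $Q=\bigcup_{n<\omega}X_n$ with $\otp(X_n)<\mu^n$, and note that since $Q$ is well-ordered it has no infinite anti-well-ordered subset, so the remaining three forbidden types are automatically excluded from each $X_n$. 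The anti-well-ordered base case is symmetric.

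For the inductive step $Q=\sum_{a\in T}Q_a$ with $T\in\mathcal{B}_{\kappa,\mu}$ and each $Q_a=\bigcup_n Q_{a,n}$ already decomposed: if $\card{T}<\kappa$, set $P_n\dfeq\bigcup_{a\in T}\{a\}\times Q_{a,n}$ and apply the two pigeonhole Lemmas~\ref{order_type_prop} and~\ref{lemma : pigeonhole for order-types} exactly as in the reduction. If instead $T$ is an (anti-)well-order of size at most $\mu$, perform the Milner--Rado index shift $P_n\dfeq\bigcup_{a\in T}\{a\}\times Q_{a,n-1}$ for $n\geqslant 1$; a well-ordered subset of $P_n$ of type $\mu^n$ would split as a $T$-indexed ordinal sum of well-ordered subsets of the $Q_{a,n-1}$, each of type $<\mu^{n-1}$, and the standard Milner--Rado ordinal calculation bounds this total strictly below $\mu\cdot\mu^{n-1}=\mu^n$. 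The main obstacle will be the mixed types $(\kappa\kappa^*)^n$ and $(\kappa^*\kappa)^n$ under this shift: unlike well-orders, such a subtype can straddle many summands and combine well- with anti-well-ordered directions, so the key idea is to exploit that $T$ is (anti-)well-ordered in order to force the anti-well-ordered direction of a hypothetical $(\kappa\kappa^*)^n$-subset to fit inside a single summand $\{a\}\times Q_{a,n-1}$, thereby reducing to a $(\kappa\kappa^*)^{n-1}$-subset of some $Q_{a,n-1}$ and closing the induction.
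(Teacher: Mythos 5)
There are two genuine gaps, one of which sinks the heart of the argument. In your inductive step for $Q=\sum_{a\in T}Q_a$ with $T$ a well-order of size at most $\mu$, the ``index shift'' $P_n\dfeq\bigcup_{a\in T}\{a\}\times Q_{a,n-1}$ implicitly assumes $\otp(T)\leqslant\mu$, but membership in $\mathcal{B}_{\kappa,\mu}$ only bounds the \emph{cardinality} of $T$: its order type can be any ordinal below $\mu^+$. Concretely, take $T=\mu^2$ and $Q_a=\omega^*$ for every $a$; the base case gives $Q_{a,1}=Q_a$ (since $\omega^*$ has no suborder of type $\mu^*$), so your $P_2$ is all of $\sum_{a\in\mu^2}\omega^*$, which contains a suborder of type $\mu^2$. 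A shift by one index cannot absorb an index set of type $\mu^{17}$. The paper's fix is to decompose $T$ itself (via Milner--Rado or the induction, getting $f:T\to\omega$ with $f^{-1}(m)$ omitting the four types of index $m$) and to colour $\opair{a}{b}$ by $\pi\big(f(a),f_a(b)\big)$ for an injection $\pi$ with $\pi(m,n)\geqslant m+n+1$; then a $\mu^i$-suborder of $g^{-1}(i)$ projects to a set of type $<\mu^m$ in $T$ with fibres of type $<\mu^n$, for a total $<\mu^{m+n}<\mu^i$. Relatedly, your treatment of the mixed types $(\kappa\kappa^*)^n$ is only a stated plan (``force the anti-well-ordered direction into a single summand''), and this is in fact the most delicate part of the paper's proof: there one extracts, from a $(\kappa\kappa^*)^i$-suborder, $(\kappa\kappa^*)^{i-n}$ many blocks of type $(\kappa\kappa^*)^n$, observes that the $T$-projections of non-adjacent blocks are disjoint intervals, and uses an alternating partition of the block index to embed $(\kappa\kappa^*)^{i-n}\geqslant(\kappa\kappa^*)^m$ into $f^{-1}(m)$.

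The second gap is in your reduction to the $\kappa$-scattered case. Setting $P_n\dfeq\bigcup_{\zeta<\nu}Q_{\zeta,n}$ requires $\nu<\cof(\kappa)$ for Lemma~\ref{lemma : pigeonhole for order-types} (and $\nu<\cof(\mu)$ for Lemma~\ref{order_type_prop}), but when $\kappa=\aleph_0$ the definition of $\opair{\aleph_0}{\aleph_1}$-scattered allows $\nu=\aleph_0=\cof(\kappa)$, and the pigeonhole genuinely fails there: $\omega\omega^*$ is a countable union of singletons, none of which contains a copy of $\omega\omega^*$. This is exactly the $\sigma$-scattered case needed for Corollary~\ref{corollary : milner_rado_generalization}, so it cannot be waved away. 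The paper handles it by re-indexing with an injection $\iota:\omega\times\nu\to\omega$ satisfying $\iota(m,\zeta)\geqslant m$ and setting $f(a)=\iota\big(f_{\zeta_a}(a),\zeta_a\big)$, so that each colour class lies inside a \emph{single} piece $Q_\zeta$ rather than spreading across all of them. Your reduction and your $\card{T}<\kappa$ sub-case are fine for uncountable $\kappa$, but the well-ordered-index step and the countable case both need repair along these lines.
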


\begin{proof}
  We first prove the theorem assuming $\varphi$ is a $\kappa$-scattered linear order type of size at most $\mu$.
  We proceed by induction on the complexity of $\varphi$. If
  $\varphi \in \mathcal{B}_{\kappa, \mu}$, then the statement of the theorem is either trivial (if $\card{\varphi} < \kappa$)
  or is trivially implied by the Milner-Rado paradox (if $\varphi$ is a well-order or anti-well-order).
  It thus suffices to prove that, if $\varphi = \sum_{a \in \tau} \rho_a$, where $\tau$ and $\rho_a$ satisfy
  the statement of the theorem for all $a \in \tau$, then $\varphi$ satisfies the statement of the theorem.
  Fix such a $\varphi.$

  Let $T$ be an order of type $\tau$, and, for $a \in T$, let $R_a$ be an order of type
  $\rho_a$. Let $P = \soast{\opair{a}{b}}{a \in T \wedge b \in R_a}$ be equipped with the lexicographic order, and note
  that $\otp(P) = \varphi$. Fix $f:T \rightarrow \omega$ such that, for all $n < \omega$, $f^{-1}(n)$
  does not contain a suborder of type $\mu^n, {(\mu^n)}^*$, $(\kappa\kappa^*)^n$, or $(\kappa^*\kappa)^n$.
  Similarly, for all $a \in T$, fix $f_a: R_a \rightarrow \omega$
  such that, for all $n < \omega$, $f_a^{-1}(n)$ does not contain a suborder of type $\mu^n, {(\mu^n)}^*$, $(\kappa\kappa^*)^n$, or $(\kappa^*\kappa)^n$.
  Fix an injective function $\pi:\omega \times \omega \rightarrow \omega$ such that, for all $m,n < \omega$,
  $\pi(m,n) \geqslant m+n+1$. Define $g:P \rightarrow \omega$ by letting, for $a \in T$ and $b \in R_a$,
  $g(a,b) = \pi\big(f(a), f_a(b)\big)$. We claim that, for all $i < \omega$, $g^{-1}(i)$ does not contain a suborder of type
  $\mu^i, {(\mu^i)}^*$, $(\kappa\kappa^*)^i$, or $(\kappa^*\kappa)^i$.

  Suppose for sake of contradiction that there is an $i < \omega$ such that $g^{-1}(i)$
  contains a suborder of type $\mu^i$. There must be $m,n < \omega$ such that $\pi(m,n) = i$,
  as $g^{-1}(i)$ is empty for all other values of $i$. Let $P' \subseteq g^{-1}(i)$ have type
  $\mu^i$, and let $T' = \soast{a \in T}{\exists b \in R_a(\opair{a}{b} \in P')}$. $T'$ is
  then a well-ordered subset of $f^{-1}(m)$, so $\otp(T') < \mu^m$. For each $a \in T'$,
  let $R'_a = \soast{b \in R_a}{\opair{a}{b} \in P'}$. $R'_a$ is a well-ordered subset of $f_a^{-1}(n)$,
  so $\otp(R'_a) < \mu^n$. Moreover, $P' = \sum_{a \in T'} R'_a$, so
  $\otp(P') < (\mu^n) \cdot (\mu^m) = \mu^{m+n} < \mu^i$. Contradiction.

  The argument for ${(\mu^i)}^*$ is similar.

  Now suppose that there is an $i < \omega$ such that $g^{-1}(i)$ contains
  a suborder of type $(\kappa\kappa^*)^i$.
  Again there are natural numbers $m$ and $n$ such that $\pi(m, n) = i$. Let $P' \subseteq g^{-1}(i)$
  have type $(\kappa\kappa^*)^i$. As before, we may define
  $T' \dfeq \soast{a \in T}{\exists b \in R_a(\opair{a}{b} \in P'})$ and, for each $a \in T'$,
  $R'_a \dfeq \soast{b \in R_a}{\opair{a}{b} \in P'}$. We then have $T' \not\geqslant (\kappa\kappa^*)^m$ and $\otp(R'_a) \not\geqslant (\kappa\kappa^*)^n$.
  Now let $Q$ be a linear order of type $(\kappa\kappa^*)^{i-n}$ and, for all $q \in Q$, let $P_q \subseteq P'$ be a suborder of type
  $(\kappa\kappa^*)^n$ such that, for all $p < q$ in $Q$, $P_p < P_q$. Consider
  the function $f : Q \longrightarrow \Pot(T')$ given by
  $q \longmapsto \soast{a \in T'}{\exists b \in R_a(\opair{a}{b} \in P_q)}$. Clearly, for every $q \in Q$,
  the set $f(Q)$ is an interval in $T'$. Moreover for any $p, q, r \in Q$ with $p < q < r$ we have $f(p) \cap f(r) = \emptyset$, as
  $f(p) \cap f(r) \subseteq f(q)$ and $(\kappa\kappa^*)^n \not\leqslant \otp(R'_a)$ for every $a \in f(p) \cap f(q) \cap f(r)$.
  Consider a partition $\{Q_0, Q_1\}$ of $Q$ having the property that for both $i < 2$ and $p, q \in Q_i$ with $p < q$
  there is always an $r \in \opin{p}{q}{<_Q} \cap Q_{1 - i}$. Let $c$ be a choice function for $\pwim{f}{Q_0}$.
   Then $\otp(Q_0) = (\kappa\kappa^*)^{i - n} \geqslant (\kappa\kappa^*)^m$ and $c \circ f$ is an embedding
  of $Q_0$ into $T'$. Contradiction.

  The argument for $(\kappa^*\kappa)^i$ is similar.

  This finishes the proof for the special case of $\kappa$-scattered linear orders of cardinality at most $\mu$.
  Now we continue the proof to prove the general case stated above.

  Fix a $\opair{\kappa}{\max(\aleph_1, \kappa)}$-scattered linear order type $\varphi$ of size at most $\mu$.
  Let $P$ be an order of type $\varphi$, let $\nu < \max(\aleph_1, \kappa)$ be a cardinal, and, for
  each $\zeta < \nu$, let $P_\zeta$ be a $\kappa$-scattered suborder of $P$ such that $P = \bigcup_{\zeta < \nu} P_\zeta$.
  Without loss of generality, we may assume that the $P_\zeta$'s are pairwise disjoint. For each $\zeta < \nu$,
  let $f_\zeta:P_\zeta \rightarrow \omega$ be such that, for all $n < \omega$, $f_\zeta^{-1}(n)$ does not contain
  a suborder of type $\mu^n, {(\mu^n)}^*$, ${(\kappa\kappa^*)}^n$, or $(\kappa^*\kappa)^n$.

   Now we distinguish two non-exclusive cases which cover the issue at hand:

   First, assume that $\kappa$ is uncountable. For each $n < \omega$, let $f^{-1}(n) \dfeq \bigcup_{\zeta < \nu} f_\zeta^{-1}(n)$.
   Then $\nu < \kappa$ so, by Lemmas \ref{order_type_prop} and \ref{lemma : pigeonhole for order-types}, for all $n < \omega$, $f^{-1}(n)$ does not contain a suborder of type
   $\mu^n, (\mu^n)^*$, ${(\kappa\kappa^*)}^n$, or $(\kappa^*\kappa)^n$.

   Now assume that $\kappa < \aleph_2$. Then $\nu \leqslant \aleph_0$. Let $\iota : \omega \times \nu \longrightarrow \omega$
   be an injection such that for all $m < \omega$ and $n < \nu$ one has $\iota(m, n) \geqslant m$. For each $a \in P$, let $\zeta_a$
   be the unique $\zeta < \nu$ such that $a \in P_\zeta$, and let $f(a) \dfeq \iota\big(f_{\zeta_a}(a), \zeta_a\big)$.
   Clearly, for all $n < \omega$, $f^{-1}(n)$ does not contain a suborder of type $\mu^n, {(\mu^n)}^*$, ${(\kappa\kappa^*)}^n$, or $(\kappa^*\kappa)^n$.
\end{proof}

For $\kappa = \aleph_0$ and $\mu = \aleph_1$ this yields the following corollary.

\begin{corollary} \label{corollary : milner_rado_generalization}
 Suppose $\varphi$ is a $\sigma$-scattered linear order type of size at most $\aleph_1$.
 Then every order $P$ of type $\varphi$ can be written as a union $P = \bigcup_{n < \omega} P_n$
 such that there is no $n < \omega$ for which $P_n$ has a suborder of type
 $\omega_1^n, {(\omega_1^n)}^*$, $(\omega\omega^*)^n$, or $(\omega^*\omega)^n$.
\end{corollary}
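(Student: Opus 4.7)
The plan is to observe that this corollary is simply the specialization of Theorem \ref{milner_rado_generalization} to the case $\kappa = \aleph_0$ and $\mu = \aleph_1$, so the proof reduces to verifying that the hypotheses of that theorem are met by a $\sigma$-scattered linear order of size at most $\aleph_1$.

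First I would check the cardinal hypotheses: $\kappa = \aleph_0$ and $\mu = \aleph_1$ are both infinite regular cardinals satisfying $\kappa \leqslant \mu$, so the numerical assumptions of Theorem \ref{milner_rado_generalization} hold. Next I would match the scatteredness hypothesis: the theorem requires $\varphi$ to be $\langle \kappa, \max(\aleph_1, \kappa)\rangle$-scattered, which with $\kappa = \aleph_0$ becomes $\langle \aleph_0, \aleph_1\rangle$-scattered. By definition this means $\varphi$ is a union of fewer than $\aleph_1$ many (\emph{i.e.}, countably many) $\aleph_0$-scattered suborders, and $\aleph_0$-scatteredness coincides with ordinary scatteredness. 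Thus $\langle \aleph_0, \aleph_1\rangle$-scattered is exactly the notion of $\sigma$-scattered as recalled in the Introduction. The size hypothesis $\card{\varphi} \leqslant \mu$ is also $\card{\varphi} \leqslant \aleph_1$, which is exactly our assumption.

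Having verified the hypotheses, I would simply invoke Theorem \ref{milner_rado_generalization} to obtain a decomposition $P = \bigcup_{n < \omega} P_n$ such that no $P_n$ contains a suborder of type $\mu^n$, $(\mu^n)^*$, $(\kappa\kappa^*)^n$, or $(\kappa^*\kappa)^n$; substituting $\mu = \aleph_1$ and $\kappa = \aleph_0$ yields precisely the forbidden types $\omega_1^n$, $(\omega_1^n)^*$, $(\omega\omega^*)^n$, and $(\omega^*\omega)^n$ of the corollary. Since the conclusion is obtained by direct substitution, there is essentially no obstacle here beyond checking that the countable-union definition of $\sigma$-scattered matches the notation $\langle \aleph_0, \aleph_1 \rangle$-scattered introduced in Section \ref{def_section}.
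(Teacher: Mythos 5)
Your proposal is correct and matches the paper exactly: the paper gives no separate proof, simply noting that the corollary is Theorem \ref{milner_rado_generalization} specialized to $\kappa = \aleph_0$ and $\mu = \aleph_1$, with $\sigma$-scattered being the same as $\opair{\aleph_0}{\aleph_1}$-scattered as recorded in Section \ref{def_section}. Your verification of the hypotheses is exactly the routine check the paper leaves implicit.
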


We can now complete the proof of Corollary \ref{inclusion_cor}.

\begin{proposition}
  Suppose $\kappa$ is a regular, uncountable cardinal. Then there is a linear order of size $\kappa$ that is weakly
  $\kappa$-scattered but not $\opair{\kappa}{\kappa}$-scattered.
\end{proposition}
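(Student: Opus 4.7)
The plan is to construct a linear order $L$ of size $\kappa$ satisfying two requirements: (i) $L$ does not embed $\QQ_\kappa$, whence by Gillman's result cited in the introduction $L$ contains no $\kappa$-saturated suborder and is therefore weakly $\kappa$-scattered; and (ii) every nonempty open interval of $L$ contains, for each $n<\omega$, sufficiently many pairwise disjoint copies of $(\kappa\kappa^*)^n$. The second condition will be used, via Theorem \ref{milner_rado_generalization} and Lemma \ref{lemma : pigeonhole for order-types}, to prevent $L$ from being $\langle\kappa,\kappa\rangle$-scattered.

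For the construction itself I proceed by transfinite recursion of length $\kappa$, building an increasing chain $\langle L_\alpha : \alpha<\kappa\rangle$ of linear orders of size less than $\kappa$ with $L=\bigcup_{\alpha<\kappa}L_\alpha$. At each stage two alternating bookkeeping tasks are performed. The \emph{enrichment} task adds new elements in order to realise further copies of each $(\kappa\kappa^*)^n$ within every currently-existing nonempty open interval, simultaneously ensuring $\kappa$-density. The \emph{sabotage} task addresses potential embeddings of $\QQ_\kappa$ into the final $L$: when $\kappa^{<\kappa}>\kappa$ this is automatic since $|L|=\kappa<|\QQ_\kappa|$, and when $\kappa^{<\kappa}=\kappa$ one enumerates all $\kappa$ many potential order-preserving injections $\QQ_\kappa\hookrightarrow L$ and sabotages each in turn by producing a gap where the embedding would require a witness.

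To show that $L$ is not $\langle\kappa,\kappa\rangle$-scattered, assume for contradiction it were. By Theorem \ref{milner_rado_generalization} applied with $\mu=\kappa$, there is then an $\omega$-partition $L=\bigcup_{n<\omega}P_n$ with $P_n$ containing no suborder of type $(\kappa\kappa^*)^n$. For each $n<\omega$, each of the disjoint copies of $(\kappa\kappa^*)^n$ provided by property (ii) is partitioned among the $P_k$'s, and since $\omega<\cof(\kappa)=\kappa$, Lemma \ref{lemma : pigeonhole for order-types} forces some $P_k$ to contain the entire copy. A pigeonhole argument over the $\kappa$ copies and the $\omega$ values of $k$ extracts a single $k(n)<\omega$ for which $P_{k(n)}\supseteq(\kappa\kappa^*)^n$, and a further pigeonhole on $n\mapsto k(n)$ should yield some $k^\ast$ with $P_{k^\ast}\supseteq(\kappa\kappa^*)^m$ for some $m\geqslant k^\ast$, hence $P_{k^\ast}\supseteq(\kappa\kappa^*)^{k^\ast}$, contradicting the choice of partition.

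The main obstacle lies in ensuring that the final pigeonhole produces a $k^\ast$ hit by $n\mapsto k(n)$ at some $n\geqslant k^\ast$: a priori this map could be finite-to-one with rapid growth, in which case no single $P_{k^\ast}$ is forced to contain $(\kappa\kappa^*)^{k^\ast}$. Overcoming this requires arranging the copies of $(\kappa\kappa^*)^n$ provided by (ii) in a carefully nested or stacked fashion across $\kappa$ intervals, so that the uniformity of the richness transfers to some single $P_{k^\ast}$. Balancing this richness requirement against the anti-$\QQ_\kappa$ sabotage task in the recursion is where the bulk of the technical work resides.
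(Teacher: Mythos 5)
Your high-level strategy (no copy of $\QQ_\kappa$ gives weak $\kappa$-scatteredness via Gillman, and richness in copies of $(\kappa\kappa^*)^n$ should clash with Theorem \ref{milner_rado_generalization}) is reasonable in outline, but both halves of your recursion have genuine gaps. For the sabotage half: when $\kappa^{<\kappa}=\kappa$ there are $2^\kappa$, not $\kappa$, potential order-preserving injections of $\QQ_\kappa$ into an order of size $\kappa$ (indeed $2^\kappa$ many subsets of $L$ that could be $\kappa$-saturated), so they cannot be enumerated and killed one at a time in a recursion of length $\kappa$. There is also an unresolved tension with the enrichment half, which asks every interval to be $\kappa$-dense and to absorb $\kappa$-sized configurations that do not even exist in any approximation $L_\alpha$ of size $<\kappa$. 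The paper avoids diagonalization entirely: it takes a lexicographic power (namely ${}^{\omega}\delta$ with $\delta=\kappa^\omega$ when $\kappa^{\aleph_0}=\kappa$, and a $\kappa$-sized suborder of ${}^{\omega}\mu$ for a suitable regular $\mu<\kappa$ otherwise) and obtains weak $\kappa$-scatteredness structurally from Lemma \ref{exponent_lemma}.

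For the non-$\opair{\kappa}{\kappa}$-scatteredness half, the obstacle you flag yourself is fatal as stated: an order in which every interval contains, for every $n$, many pairwise disjoint copies of $(\kappa\kappa^*)^n$ can still admit a decomposition $\bigcup_{n<\omega}P_n$ with $(\kappa\kappa^*)^n\not\leqslant\otp(P_n)$ --- for instance a suitably homogenized version of $\sum_{n<\omega}(\kappa\kappa^*)^n$ --- so mere abundance of copies does not contradict Theorem \ref{milner_rado_generalization}. What is needed is a genuine pigeonhole relation $L\arrows(\psi)^1_{\aleph_0}$ for a single type $\psi$ dominating all the forbidden powers, so that one colour class is forced to swallow $\psi$ whole. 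This is exactly what the paper arranges: Lemma \ref{unary_partition_lemma} gives ${}^{\aleph_0}\varphi\arrows(\varphi)^1_{\aleph_0}$, applied with $\varphi=\kappa^\omega\geqslant\kappa^n$ for all $n$. (In the case $\kappa^{\aleph_0}>\kappa$ the paper sidesteps the partition argument altogether: its order has a dense subset of size $<\kappa$ and hence no suborder of type $\kappa$ or $\kappa^*$, while Theorem \ref{small_hausdorff} forces any $\kappa$-scattered piece of size $\kappa$ to contain one.) Without supplying such a pigeonhole property, and a construction that actually realizes it while staying weakly $\kappa$-scattered, the proposal does not close.
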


\begin{proof}
  Suppose first that $\kappa^{\aleph_0} = \kappa$. Let $\delta = \kappa^\omega$ (ordinal exponentiation), and let $\varphi = {^\omega}\delta$.
  Since $\kappa^{\aleph_0} = \kappa$, $\card{\varphi} = \kappa$. By Lemma \ref{exponent_lemma}, $\varphi$ is weakly $\kappa$-scattered. By Lemma
  \ref{unary_partition_lemma}, $\varphi \arrows (\kappa^\omega)^1_{\aleph_0}$. Therefore, by
  Theorem \ref{milner_rado_generalization}, $\varphi$ is not $\opair{\kappa}{\kappa}$-scattered.

  Next, suppose that $\kappa^{\aleph_0} > \kappa$. Then there is a regular $\mu < \kappa$ such that
  $\mu^{\aleph_0} > \kappa$. Let $P$ be a suborder of ${^\omega} \mu$ of size $\kappa$. By
  Lemma \ref{exponent_lemma}, $P$ is weakly $\kappa$-scattered. Also, ${^\omega} \mu$ has a
  dense suborder of size $\mu$, namely the set of $f \in {^\omega} \mu$ such that $f(n) = 0$ for
  all but finitely many $n < \omega$. Thus, $P$ does not contain a suborder of type $\kappa$ or $\kappa^*$.
  Suppose for sake of contradiction that $P = \bigcup_{\zeta < \nu} P_\zeta$, where $\nu < \kappa$
  and $P_\zeta$ is $\kappa$-scattered for all $\zeta < \nu$. Since $\card{P} = \kappa$, there is
  $\zeta < \nu$ such that $\card{P_\zeta} = \kappa$. But Theorem \ref{small_hausdorff} implies that
  every $\kappa$-scattered linear order of size $\kappa$ contains either a suborder of type $\kappa$ or a suborder of
  type $\kappa^*$, which is a contradiction.
\end{proof}

We next use Theorem \ref{milner_rado_generalization} to give a negative partition relation for $\opair{\aleph_1}{\aleph_1}$-scattered linear order types
of size at most $\aleph_1$, assuming $\omega_1\omega \doesntarrow (\omega_1\omega, 3)^2$.
\begin{theorem} \label{theorem : omega_1_negative_scattered}
  Assume that $\omega_1\omega \doesntarrow (\omega_1\omega, 3)^2$, and let $\tau$ be an
  $\opair{\aleph_1}{\aleph_1}$-scattered linear order type of size at most $\aleph_1$. Then
  $\tau \doesntarrow (\omega_1^{\omega} \disj {(\omega_1^{\omega})}^*, 3)^2$.
\end{theorem}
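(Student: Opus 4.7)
The plan is to take a coloring $c \colon [\omega_1\omega]^2 \to 2$ witnessing $\omega_1\omega \doesntarrow (\omega_1\omega, 3)^2$ and pull it back along a well-chosen injection $\phi \colon P \to \omega_1\omega$, where $P$ is an ordered set of type $\tau$. Concretely, invoke Theorem \ref{milner_rado_generalization} with $\kappa = \mu = \aleph_1$ to decompose $P = \bigcup_{n < \omega} P_n$ so that $P_n$ has no suborder of type $\omega_1^n$ or $(\omega_1^n)^*$ (after replacing $P_n$ by $P_n \setminus \bigcup_{m<n} P_m$, the $P_n$'s are pairwise disjoint and the exclusion properties are preserved). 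For each $n$, fix an arbitrary injection $\iota_n \colon P_n \to \omega_1$, and define $\phi(p) \dfeq \omega_1 \cdot n + \iota_n(p)$ for the unique $n$ with $p \in P_n$. Then set $d(\{p,q\}) \dfeq c(\{\phi(p),\phi(q)\})$, which is well-defined since $\phi$ is injective (different blocks are automatically disjoint).

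The absence of a $1$-homogeneous triangle for $d$ is immediate from the absence of one for $c$, so the content is to rule out $0$-homogeneous copies of $\omega_1^\omega$ and $(\omega_1^\omega)^*$. Suppose $X \subseteq P$ is $0$-homogeneous with $\otp(X) \in \{\omega_1^\omega, (\omega_1^\omega)^*\}$; I want to show that $\phi(X) \subseteq \omega_1\omega$ is a $0$-homogeneous set of order type $\omega_1\omega$, which contradicts the choice of $c$. The $0$-homogeneity of $\phi(X)$ is automatic from the definition of $d$ and the injectivity of $\phi$. So the crux is the order-type calculation. Note that if $\otp(X) = \omega_1^\omega$, then $X \cap P_n$ is well-ordered of type $< \omega_1^n$, while if $\otp(X) = (\omega_1^\omega)^*$, then (reversing) $X \cap P_n$ is anti-well-ordered of type $< (\omega_1^n)^*$, so in either case $|X \cap P_n| \leqslant \aleph_1$ and $X^{(\pm)} \cap P_n$ has the appropriate well-ordered type strictly less than $\omega_1^n$.

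The key claim is that $|X \cap P_m| = \aleph_1$ for infinitely many $m$. Assume otherwise, so there is some $N$ such that for all $m \geqslant N$, $X \cap P_m$ is countable, hence of (well-ordered or anti-well-ordered) type $< \omega_1 \leqslant \omega_1^N$; and for $m < N$, the type is $< \omega_1^m \leqslant \omega_1^N$. Applying Lemma \ref{order_type_prop} to the appropriate well-ordered version ($X$ itself, or its reverse $X^*$) as a countable union of pieces of type $< \omega_1^N$ yields $\otp(X) < \omega_1^N$ (or $\otp(X^*) < \omega_1^N$), contradicting $\otp(X) = \omega_1^\omega$ (or $\otp(X^*) = \omega_1^\omega$). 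Thus infinitely many $m$ satisfy $|X \cap P_m| = \aleph_1$, hence $\otp(\iota_m(X \cap P_m)) = \omega_1$ as an uncountable subset of $\omega_1$ has order type $\omega_1$. Finally, $\phi(X)$ is, blockwise, the ordinal sum $\sum_{m<\omega} \otp(Y_m)$ where $Y_m = \phi(X \cap P_m) \subseteq [\omega_1 m, \omega_1(m+1))$; with infinitely many $Y_m$ of type $\omega_1$ and all others of type $\leqslant \omega_1$, ordinal absorption gives $\sum_m \otp(Y_m) = \omega_1 \cdot \omega = \omega_1\omega$, completing the contradiction.

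The one point requiring care is the anti-well-ordered case, where one has to translate the hypothesis about $P_n$ avoiding $(\omega_1^n)^*$ into the assertion that $X^* \cap P_n$ is well-ordered of type $< \omega_1^n$ in the reverse order on $P_n$, and then apply Lemma \ref{order_type_prop} to $X^*$; but modulo a routine reversal this runs identically to the $\omega_1^\omega$ case. I expect this dualisation, rather than anything genuinely new, to be the main thing to be written out carefully.
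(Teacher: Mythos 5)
Your argument is correct and is essentially the paper's own proof: the paper likewise decomposes $T=\bigcup_n T_n$ via Theorem \ref{milner_rado_generalization}, pulls back a witnessing graph for $\omega_1\omega \doesntarrow (\omega_1\omega,3)^2$ so that no triangle exists and no set meeting infinitely many $T_n$ in $\aleph_1$ points is independent, and leaves as "easily verified" exactly the order-type computation you carry out (that any copy of $\omega_1^{\omega}$ or $(\omega_1^{\omega})^*$ must meet infinitely many blocks uncountably, by Lemma \ref{order_type_prop}). Your write-up just makes the pullback injection and the dual anti-well-ordered case explicit.
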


\begin{proof}
  Let $T$ be a linear order of type $\tau$, and use Theorem \ref{milner_rado_generalization} to write $T$ as a union $T = \bigcup_{n < \omega} T_n$
  such that, for all $n < \omega$, $\omega_1^\omega, (\omega_1^\omega)^* \not\leqslant \otp(T_n)$.
  Then use $\omega_1\omega \doesntarrow (\omega_1\omega, 3)^2$ to find $E \subseteq [T]^2$ such that $[X]^2 \not\subseteq E$ for all $X \in [T]^3$
  and $[Y]^2 \not\subseteq [T]^2 \setminus E$ for all $Y \subseteq \tau$ such that
  $\card{\soast{n < \omega}{\card{Y \cap T_n} = \aleph_1}} = \aleph_0$. It is easily verified that the function $f:[T]^2 \rightarrow 2$
  defined by $f(a,b) = 0$ iff $\{a,b\} \not\in E$ witnesses the negative partition relation.
\end{proof}

%Corollary \ref{corollary : stick + unbounding => partition} for $\alpha = \omega_1\omega$ immediately gives rise to the following:

%\begin{corollary}
%If $\omega_1\omega \doesntarrow (\omega_1\omega, 3)^2$, and $\tau$ is an $\opair{\aleph_1}{\aleph_1}$-scattered linear order type of size at most $\aleph_1$, then $\tau \doesntarrow (\omega_1^{\omega} \disj {(\omega_1^{\omega})}^*, 3)^2$.
%\end{corollary}

Since $\omega_1^\omega$ and $(\omega_1^\omega)^*$ are themselves scattered, and therefore
$\aleph_1$-scattered, order types of size $\aleph_1$, Theorem \ref{theorem : omega_1_negative_scattered},
when compared with Theorem \ref{weakly_scattered_thm}, provides an instance in which the classes of
$\aleph_1$-scattered or $\opair{\aleph_1}{\aleph_1}$-scattered linear orders behave quite differently from the class of weakly
$\aleph_1$-scattered linear orders.
By examining the proof of Theorem \ref{theorem : omega_1_negative_scattered}, it is evident that $\aleph_1$ can be
replaced by an arbitrary regular, uncountable $\kappa$ provided that $\kappa \omega \doesntarrow
(\kappa \omega, 3)^2$. This hypothesis does not always hold. In \cite{987SS0}, Shelah and Stanley
prove that, if $\kappa > \omega$ is regular and, for all $\mu < \kappa$, $\mu^{\aleph_0} < \kappa$,
then $\kappa \omega \arrows (\kappa \omega, n)^2$ for all $n < \omega$. However, in the same paper, they
show that, if $\kappa > \aleph_1$ is regular, then there is a ccc forcing extension in which
$\kappa \omega \doesntarrow (\kappa \omega, 3)^2$. We thus easily obtain the following corollary.

\begin{corollary}
  Suppose that $\kappa > \aleph_1$ is regular. Then there is a ccc forcing extension in which
  there is a $\kappa$-scattered linear order type $\varphi$ of size $\kappa$ such that, for all $\opair{\kappa}{\kappa}$-scattered
  linear order types $\tau$ of size $\kappa$, $\tau \doesntarrow (\varphi, 3)^2$.
\end{corollary}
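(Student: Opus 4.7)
The plan is to combine the Shelah-Stanley consistency result with the stated generalization of Theorem \ref{theorem : omega_1_negative_scattered}. By \cite{987SS0}, since $\kappa > \aleph_1$ is regular, there is a ccc forcing $\mathbb{P}$ forcing $\kappa\omega \doesntarrow (\kappa\omega, 3)^2$; I work inside $V^{\mathbb{P}}$, where $\kappa$ is still regular since $\mathbb{P}$ is ccc.

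Inside that extension I set $\varphi \dfeq \kappa^\omega + (\kappa^\omega)^*$, where $\kappa^\omega$ denotes ordinal exponentiation. Because $\kappa$ is regular and uncountable, $\kappa^\omega = \sup_{n < \omega} \kappa^n$ has cardinality $\kappa$, so $\card{\varphi} = \kappa$. As $\varphi$ is the lexicographic sum indexed by the two-element order of a well-order and an anti-well-order of size $\kappa$, both of which lie in $\mathcal{BL}_\kappa$, Theorem \ref{hausdorff_structure_thm} places $\varphi$ in the class of $\kappa$-scattered types. Given any $\opair{\kappa}{\kappa}$-scattered order type $\tau$ of size $\kappa$, the extension of Theorem \ref{theorem : omega_1_negative_scattered} to $\kappa$, flagged as ``evident'' in the paragraph above, yields $\tau \doesntarrow (\kappa^\omega \disj (\kappa^\omega)^*, 3)^2$, and a $2$-colouring witnessing this alternative-style relation automatically witnesses $\tau \doesntarrow (\varphi, 3)^2$ since any subset of order type $\varphi$ contains both an initial segment of type $\kappa^\omega$ and a final segment of type $(\kappa^\omega)^*$.

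The one step that requires real verification is the generalization of Theorem \ref{theorem : omega_1_negative_scattered} from $\aleph_1$ to $\kappa$, and this is the expected obstacle to spell out. For it I would apply Theorem \ref{milner_rado_generalization} with $\mu = \kappa$, which is legitimate because $\kappa > \aleph_1$ forces $\max(\aleph_1, \kappa) = \kappa$ and $\tau$ is $\opair{\kappa}{\kappa}$-scattered of size $\kappa$. This produces a decomposition of any order of type $\tau$ into countably many pieces, none of which embeds $\kappa^n$ or $(\kappa^n)^*$ for any $n < \omega$. Combining this decomposition with a colouring witnessing $\kappa\omega \doesntarrow (\kappa\omega, 3)^2$ in exactly the fashion of the proof of Theorem \ref{theorem : omega_1_negative_scattered} delivers the desired coloring. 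No new difficulty arises, so once the Shelah-Stanley model is fixed, the corollary drops out immediately.
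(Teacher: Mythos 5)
Your proposal is correct and follows exactly the route the paper intends: force with the Shelah--Stanley ccc poset to obtain $\kappa\omega \doesntarrow (\kappa\omega,3)^2$, invoke the $\kappa$-version of Theorem \ref{theorem : omega_1_negative_scattered} (whose proof goes through verbatim via Theorem \ref{milner_rado_generalization} with $\mu = \kappa$, exactly as you note), and read off a single $\kappa$-scattered witness $\varphi$ from the alternative-style relation $\tau \doesntarrow (\kappa^\omega \disj (\kappa^\omega)^*, 3)^2$. The only (immaterial) difference is cosmetic: taking $\varphi = \kappa^\omega$ alone would already suffice, since the witnessing colouring admits no $0$-homogeneous set of that type.
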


We are also able to prove a slightly recondite theorem which, assuming $\omega_1\omega \doesntarrow (\omega_1\omega, 3)^2$, provides a stark contrast to
Theorem \ref{theorem : pair-paradigma} for $\kappa = \aleph_0$. It is strongly inspired by \cite[\S2, Corollary 2]{971EH0} and
features six different order types $\varphi$ of size $\aleph_1$ such that either $\varphi$ or $\varphi^*$ has one of the
following three characteristics of smallness:
\begin{enumerate}
\item It is well-ordered.
\item \label{careful} It is both scattered and the product of $\omega_1$ with a countable order-type.
\item Every proper initial segment is countable.
\end{enumerate}

In \eqref{careful}, note that the multiplication of order types fails to be commutative in general. Also note that the sole
order types satisfying any two of the above conditions are ordinals smaller than $\omega_1^2$.

\begin{theorem} \label{theorem : second omega_1_negative_scattered}
  Assume $\omega_1\omega \doesntarrow (\omega_1\omega, 3)^2$,
  let $\tau$ be a $\sigma$-scattered linear order type of size at most $\aleph_1$, and let $\rho$
  be an order type such that ${(\omega\omega^*)}^n \leqslant \rho$ for all natural numbers $n$. Then
  \begin{align*}
  \tau \doesntarrow \big(\omega_1^{\omega} \disj {(\omega_1^{\omega})}^* \disj \omega_1\rho \disj
  \omega_1^*\rho \disj \rho\omega_1 \disj \rho\omega_1^*, 3\big)^2.
  \end{align*}
\end{theorem}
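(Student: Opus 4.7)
My plan is to mirror the proof of Theorem \ref{theorem : omega_1_negative_scattered} while enlarging its ``rich subset'' analysis to handle the four new block-structured order types. Let $T$ have type $\tau$. By Corollary \ref{corollary : milner_rado_generalization}, write $T$ as a pairwise disjoint union $T = \bigcup_{n<\omega} T_n$ where no $T_n$ contains a suborder of type $\omega_1^n$, $(\omega_1^n)^*$, $(\omega\omega^*)^n$ or $(\omega^*\omega)^n$. Using $\omega_1\omega \doesntarrow (\omega_1\omega, 3)^2$, fix a triangle-free $E_0 \subseteq [\omega_1\omega]^2$ that meets every suborder of $\omega_1\omega$ of type $\omega_1\omega$; identify $\omega_1\omega$ with $\omega \times \omega_1$ in the lexicographic order. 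Choose injections $\beta_n \colon T_n \to \omega_1$, write $n_a$ for the index with $a \in T_{n_a}$, and define $E \subseteq [T]^2$ by the pullback $\{a,b\} \in E \iff \{(n_a,\beta_{n_a}(a)),(n_b,\beta_{n_b}(b))\} \in E_0$. Then the col\ou ring $f(\{a,b\}) = 0 \iff \{a,b\} \notin E$ has no $1$-homogeneous triple, since such a triple would pull back to a triangle in $E_0$.

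Call $Y \subseteq T$ \emph{rich} if $\card{Y \cap T_n} = \aleph_1$ for infinitely many $n$. A rich $Y$ cannot be $0$-homogeneous for $f$, since its image in $\omega \times \omega_1$ contains a suborder of type $\omega_1\omega$, forcing a pair in $E_0$ that descends to a pair in $E \cap [Y]^2$. It thus suffices to show that each of the six order types in the conclusion forces richness.

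For $\omega_1^\omega$: given $Y$ of type $\omega_1^\omega$ with initial segment $Y_n$ of type $\omega_1^n$, apply Lemma \ref{order_type_prop} to the decomposition $Y_n = \bigcup_{k<\omega}(Y_n \cap T_k)$ with $\kappa = \omega_1$ to obtain some $k$ with $\otp(Y_n \cap T_k) \geqslant \omega_1^n$; this forces $k > n$ because $T_k$ avoids $\omega_1^k$, producing unboundedly many $k$ with $\card{Y \cap T_k} = \aleph_1$. The case $(\omega_1^\omega)^*$ is symmetric. For each $\psi \in \{\omega_1\rho,\omega_1^*\rho,\rho\omega_1,\rho\omega_1^*\}$, suppose toward a contradiction that $Y$ has type $\psi$ and $\card{Y \cap T_n} = \aleph_1$ only for $n \leqslant N$, so that $Y_{\mathrm{un}} \dfeq Y \cap \bigcup_{n \leqslant N} T_n$ differs from $Y$ by a countable set. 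A block-counting argument shows $Y_{\mathrm{un}}$ still contains a copy of $\psi$: $\omega_1$- and $\omega_1^*$-typed blocks of size $\aleph_1$ cannot be emptied by countable removals, and $\omega_1$- or $\omega_1^*$-indexed families of blocks retain an $\omega_1$- respectively $\omega_1^*$-indexed subfamily of untouched blocks. Since $\rho \geqslant (\omega\omega^*)^{N+1}$, this copy contains $(\omega\omega^*)^{N+1}$. But $Y_{\mathrm{un}} = \bigcup_{n \leqslant N}(Y \cap T_n)$ is a union of $N+1$ suborders, each avoiding $(\omega\omega^*)^{N+1}$ (since $T_n$ avoids $(\omega\omega^*)^n$, hence also any larger power), so Lemma \ref{lemma : pigeonhole for order-types} with $\alpha = \omega$ and $\nu = N+1$ implies $Y_{\mathrm{un}}$ avoids $(\omega\omega^*)^{N+1}$, a contradiction.

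The main technical point will be the block-counting verification in the four $\psi$-cases, where the key invariants are that $\omega_1$ and $\omega_1^*$ absorb countable alterations both as block interiors and as block indices, so that removing a countable subset from $Y$ never destroys the containment of $(\omega\omega^*)^{N+1}$ that the hypothesis on $\rho$ guarantees.
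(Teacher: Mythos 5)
Your proposal is correct, and its skeleton coincides with the paper's: the same decomposition $T=\bigcup_{n<\omega}T_n$ via Corollary \ref{corollary : milner_rado_generalization}, the same pulled-back triangle-free graph $E$ that meets every ``rich'' set (one meeting infinitely many $T_n$ uncountably), and the same reduction of the six target types to richness, with the $\omega_1^{\omega}$ and $(\omega_1^{\omega})^*$ cases handled exactly as in Theorem \ref{theorem : omega_1_negative_scattered}. Where you genuinely diverge is in the four types built from $\rho$. The paper establishes richness \emph{directly} and non-uniformly: for $\omega_1\rho$ it shrinks each $\omega_1$-block into a single $T_n$ by pigeonhole and uses $(\omega\omega^*)^n\leqslant\rho$ together with $(\omega\omega^*)^n\not\leqslant\otp(\bigcup_{k<n}T_k)$ to push blocks into $T_k$ for unboundedly many $k$; for $\rho\omega_1$ it runs a two-case analysis and an inductive construction of uncountable sets $X_k$ and indices $r_k$, $n_{k}$ producing a rich subset $Z\subseteq Y$. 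You instead argue by contraposition: if $Y$ is not rich then, modulo a countable set, $Y$ lives in $\bigcup_{n\leqslant N}T_n$; the absorption of countable sets by $\omega_1$ and $\omega_1^*$ (both as block interiors and as index sets) recovers a copy of $\psi$, hence of $(\omega\omega^*)^{N+1}$, inside a union of $N+1$ pieces each omitting $(\omega\omega^*)^{N+1}$, contradicting Lemma \ref{lemma : pigeonhole for order-types}. The two ``block-counting'' invariants you isolate are exactly what is needed and both hold, so there is no gap. Your route buys uniformity --- one argument template covers $\omega_1\rho$, $\omega_1^*\rho$, $\rho\omega_1$ and $\rho\omega_1^*$ simultaneously, and it dispenses with the inductive construction entirely --- at the cost of being non-constructive about where the edge of $E$ inside $Y$ actually comes from; the paper's version exhibits the rich witness explicitly. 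Both proofs lean on the same two lemmas, so neither is more general than the other.
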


\begin{proof}
  The first part of the proof follows the line of the proof of Theorem \ref{theorem : omega_1_negative_scattered}.

  Let $T$ be an order of type $\tau$, and use Corollary \ref{corollary : milner_rado_generalization} to write $T$ as a union $T = \bigcup_{n < \omega} T_n$
  such that, for all $n < \omega$, $T_n$ does not contain a suborder of type $\omega_1^\omega, {(\omega_1^\omega)}^*$, or $(\omega\omega^*)^n$.
  Use $\omega_1\omega \doesntarrow (\omega_1\omega, 3)^2$ to find $E \subseteq [T]^2$ such that $[X]^2 \not\subseteq E$ for all $X \in [T]^3$
  and $[Y]^2 \not\subseteq [T]^2 \setminus E$ for all $Y \subseteq T$ such that $\card{\soast{n < \omega}{
  \card{Y \cap T_n} = \aleph_1}} = \aleph_0$. Define $c:[T]^2 \rightarrow 2$ by $c(a,b) = 0$ iff $\{a,b\} \notin E$.
  We show that $c$ witnesses the negative partition relation, \ie\ that, for every suborder $Y$ of $T$ whose order type
  appears in the first coordinate of the partition relation, $[Y]^2 \cap E \neq \emptyset$.
  For $Y \in [T]^{\omega_1^\omega} \cup [T]^{{(\omega_1^\omega)}^*}$, this follows
  as in the proof of Theorem \ref{theorem : omega_1_negative_scattered}.

  Now let $\rho$ be an order-type such that ${(\omega\omega^*)}^n \leqslant \rho$ for all natural numbers $n$ and let $R$ be an order of type $\rho$.
  First assume that $Y \in [T]^{\omega_1\rho}$. Let
  $f : \opair{R \times \omega_1}{\ltl} \longleftrightarrow Y$ be order-preserving. By the pigeonhole principle we may assume \wlg\ that for all $r \in R$ there is an $n < \omega$ such that
  $\pwim{f}{\{r\} \times \omega_1} \subseteq T_n$. Since, for all $n < \omega$, ${(\omega\omega^*)}^n \leqslant \rho$ and ${(\omega\omega^*)}^n \not\leqslant \otp(\bigcup_{k < n} T_k)$,
  it follows that for all natural numbers $n$ there is an $r \in R$ and a $k \in \opin{n}{\omega}{<}$ such that
  $\pwim{f}{\{r\} \times \omega_1} \subseteq T_k$. This easily implies that $[Y]^2 \cap E \neq \emptyset$.

  Next, assume that $Y \in[T]^{\rho\omega_1}$.
  Let
  $f : \opair{\omega_1 \times R}{\ltl} \longleftrightarrow Y$ be order-preserving. We distinguish two cases:

  First assume that there are $\alpha < \omega_1$ and $n < \omega$ such that
  ${(\omega\omega^*)}^n \leqslant \otp\soast{r \in R}{f(\alpha, r) \in \bigcup_{k < n} T_k}$.
  But then ${(\omega\omega^*)}^n \leqslant \otp(\bigcup_{k < n} T_k)$ and so by Lemma \ref{lemma : pigeonhole for order-types}
  there is a $k < n$ with ${(\omega\omega^*)}^n \leqslant \otp(T_k)$, a contradiction.

  Next, assume that for all $\alpha < \omega_1$ and $n < \omega$ we have
  \[
    {(\omega\omega^*)}^n \not \leqslant \otp\soast{r \in R}{f(\alpha, r) \in \bigcup_{k < n} T_k}.
  \]
  This means that, in particular, $\pwim{f}{\{\alpha\} \times R} \not \subseteq \bigcup_{k < n} T_k$.
  Using the pigeonhole principle, let $X_0 \in [\omega_1]^{\omega_1}$, $r_0 \in R$, and $n_0 < \omega$ be such that $f(\alpha, r_0) \in T_{n_0}$
  for all $\alpha \in X_0$. Now, inductively, for every $k < \omega$, choose $X_{k + 1} \in [X_k]^{\omega_1}$, $r_{k + 1} \in R$, and
  $n_{k + 1} \in \opin{n_k}{\omega}{<}$ such that $f(\alpha, r_{k + 1}) \in T_{n_{k + 1}}$ for all $\alpha \in X_{k + 1}$.

  At the end of the inductive construction let
  $Z \dfeq \soast{f(\alpha, r_i)}{i < \omega \wedge \alpha \in X_i}$.
  Then $Z \subseteq Y$ and, clearly, $\card{\soast{n < \omega}{\card{Z \cap T_n} = \aleph_1}} = \aleph_0$, so $E \cap [Y]^2 \neq \emptyset$.

  The arguments for sets of order-type $\rho\omega_1^*$ and $\omega_1^*\rho$ are analogous, so this finishes the proof of the Theorem.
\end{proof}

Corollary \ref{corollary : stick + unbounding => partition} yields the following (\cf\ Remark \ref{remark : (alpha*alpha^*)^omega}):

\begin{corollary} \label{corollary : second omega_1_negative_scattered}
  Assume $\omega_1\omega \doesntarrow (\omega_1\omega, 3)^2$, and let $\tau$ be a $\sigma$-scattered linear order type of size at most $\aleph_1$. Then
  \begin{align}
  \tau \doesntarrow \big(\omega_1^{\omega} \disj {(\omega_1^{\omega})}^* \disj \omega_1(\omega\omega^*)^\omega \disj
  \omega_1^*(\omega\omega^*)^\omega \disj (\omega\omega^*)^\omega\omega_1 \disj (\omega\omega^*)^\omega\omega_1^*, 3\big)^2.
  \end{align}
\end{corollary}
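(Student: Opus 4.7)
The plan is to derive this corollary as an immediate specialisation of Theorem~\ref{theorem : second omega_1_negative_scattered} by taking $\rho \dfeq (\omega\omega^*)^\omega$. Once this parameter is substituted, the six alternatives in the conclusion of the theorem become exactly the six alternatives in the statement of the corollary, so the only thing left to check is the hypothesis of the theorem concerning $\rho$, namely that $(\omega\omega^*)^n \leqslant (\omega\omega^*)^\omega$ for every natural number $n$.

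To produce this embedding I would appeal to Remark~\ref{remark : (alpha*alpha^*)^omega}, which realises $(\omega\omega^*)^\omega$ as the order type of $\omega^{<\omega}$ under the ordering $\prec_\omega$, and restrict attention to the suborder $\omega^{2n}$ consisting of sequences of length exactly $2n$. For two distinct elements $s,t$ of this suborder, neither is a proper initial segment of the other, so the defining clauses of $\prec_\omega$ reduce to: $s \prec_\omega t$ iff $t(\delta(s,t)) < s(\delta(s,t))$ when $\delta(s,t)$ is even, and $s(\delta(s,t)) < t(\delta(s,t))$ when $\delta(s,t)$ is odd. This is the lexicographic ordering on the product of the $2n$ linear orders $\omega^*,\omega,\omega^*,\omega,\ldots,\omega^*,\omega$ with the leftmost position most significant; grouping the factors in consecutive pairs exhibits it as the $n$-fold lexicographic product of $\omega\omega^*$, that is, $(\omega\omega^*)^n$.

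There is no genuine obstacle here, since the whole combinatorial content has already been invested in Theorem~\ref{theorem : second omega_1_negative_scattered}; the only steps are selecting $\rho$ and exhibiting the routine embedding above. The cross-reference to Corollary~\ref{corollary : stick + unbounding => partition} in the statement serves chiefly to remind the reader that the hypothesis $\omega_1\omega \doesntarrow (\omega_1\omega,3)^2$ can be secured from the cardinal-arithmetic principle $\mathfrak{b} = \stick = \aleph_1$ by applying that corollary with $\kappa = \aleph_0$, which is what makes the conclusion a concrete consequence of a well-studied hypothesis rather than a conditional statement.
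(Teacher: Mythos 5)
Your proposal is correct and is exactly the derivation the paper intends: specialise Theorem~\ref{theorem : second omega_1_negative_scattered} to $\rho = (\omega\omega^*)^\omega$, with the hypothesis $(\omega\omega^*)^n \leqslant \rho$ checked via the representation of $(\omega\omega^*)^\omega$ from Remark~\ref{remark : (alpha*alpha^*)^omega}. The paper omits the verification of that embedding entirely, so your restriction to the length-$2n$ sequences, where $\prec_\omega$ reduces to the alternating lexicographic product $\omega^*\times\omega\times\cdots\times\omega^*\times\omega$, supplies a detail the paper leaves to the reader; your reading of the cross-reference to Corollary~\ref{corollary : stick + unbounding => partition} as merely securing the hypothesis from $\mathfrak{b}=\stick=\aleph_1$ is also the sensible one.
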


\section{Generalizing a result of Komj\'ath and Shelah} \label{ks_section}

In this section, we general\iz e the following result of Komj\'ath and Shelah from \cite{003KS0} about partitioning
scattered linear orders.

\begin{theorem}
  If $\varphi$ is a scattered linear order and $\nu$ is a cardinal, then there is a scattered linear
  order $\psi$ such that $\psi \arrows [\varphi]^1_{\nu, \aleph_0}$.
\end{theorem}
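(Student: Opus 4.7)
The plan is to proceed by induction on the Hausdorff rank of $\varphi$ provided by Theorem \ref{hausdorff_structure_thm} applied with $\kappa = \aleph_0$. For the base case $\varphi \in \mathcal{BL}_{\aleph_0}$: if $\card{\varphi} \leqslant \aleph_0$, I would set $\psi \dfeq \varphi$, since any $\nu$-coloring of a countable set has image of cardinality at most $\aleph_0$; if $\varphi$ is an uncountable well-order of type $\alpha$, I would invoke the classical ordinal partition calculus to pick $\beta$ with $\beta \arrows (\alpha)^1_\nu$ (for example $\beta$ a regular cardinal exceeding both $\nu$ and $\card{\alpha}$) and set $\psi \dfeq \beta$. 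The uncountable anti-well-order case is dual.

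For the inductive step I would write $\varphi = \sum_{a \in T} \varphi_a$ with $T \in \mathcal{BL}_{\aleph_0}$ and each $\varphi_a$ of lower rank, obtaining from the inductive hypothesis scattered witnesses $\psi_a$ with $\psi_a \arrows [\varphi_a]^1_{\nu, \aleph_0}$. If $T$ is countable, I would set $\psi \dfeq \sum_{a \in T} \psi_a$: given a $\nu$-coloring $c$ of $\psi$, extracting from each summand a copy of $\varphi_a$ with countable color set $C_a$ yields a copy of $\varphi$ in $\psi$ whose color set $\bigcup_{a \in T} C_a$ is still countable.

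The bulk of the proof then goes into the case where $T$ is an uncountable well-order of type $\alpha$, for here $\bigcup_{a \in T} C_a$ can in general be uncountable. My proposal is a two-level amplification. First, replace each $\psi_a$ by its ``thickening'' $\psi^+_a \dfeq \sum_{\xi < \beta} \psi_a$, where $\beta$ is a regular cardinal exceeding $\nu^{\aleph_0} + \card{\alpha}$, so that $\beta \arrows (\alpha)^1_{\nu^{\aleph_0}}$ holds by pigeonhole. Then set $\psi \dfeq \sum_{a \in T} \psi^+_a$, which is scattered by Theorem \ref{hausdorff_structure_thm}. Given $c: \psi \to \nu$, the inductive hypothesis applied within the $\xi$th slab of $\psi^+_a$ produces a copy of $\varphi_a$ with countable color set $C_{a,\xi}$; the induced coloring $\xi \mapsto C_{a,\xi}$ of $\beta$ into $[\nu]^{\aleph_0}$ has, by the partition relation on $\beta$, a monochromatic suborder of order type $\alpha$ with common value $C^*_a \in [\nu]^{\aleph_0}$. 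An additional outer pigeonhole along $a \in T$, obtained by reindexing $T$ through a further large well-order --- say something of type $T$ sitting inside a product $T \cdot \beta'$ of the appropriate shape --- and then stabilising $a \mapsto C^*_a$ on a suborder of type $T$, produces a single countable $C^* \subseteq \nu$ absorbing every $C^*_a$, and patching over the stabilised family yields a copy of $\varphi$ in $\psi$ with color set contained in $C^*$.

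The main obstacle is this outer pigeonhole: the signatures $C^*_a$ take values in $[\nu]^{\aleph_0}$, of size $\nu^{\aleph_0}$, and stabilising them along a genuine copy of the uncountable $T$ --- rather than on a set that merely has cardinality $\card{T}$ --- forces a careful reindexing of $T$ and a second application of ordinary ordinal partition calculus. Once this is arranged, scatteredness of the resulting $\psi$ is automatic from the closure of the class of scattered orders under well-ordered sums of scattered orders encapsulated in Theorem \ref{hausdorff_structure_thm}, and the case where $T$ is an uncountable anti-well-order is handled symmetrically.
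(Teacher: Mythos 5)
Your base case and the countable-index step are fine, but the uncountable well-ordered index case --- which you rightly identify as the crux --- is not merely incomplete: the order $\psi$ you build there genuinely fails the partition relation, so no reindexing or outer pigeonhole can rescue the construction. Concretely, take $\nu = \aleph_1$ and $\varphi = \sum_{a < \omega_1} \varphi_a$ with $\varphi_a = (\omega^{1+a})^*$. Each $\varphi_a$ is countable, so your inductive witness is $\psi_a = \varphi_a$ itself, and after flattening all thickenings and reindexings your $\psi$ has the form $\sum_{p \in W} (\omega^{1+h(p)})^*$ for some well-order $W$ and some $h : W \rightarrow \omega_1$. Colour every point of the $p$-th summand by $h(p)$; this uses at most $\aleph_1 = \nu$ colours. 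Now let $X \subseteq \psi$ be \emph{any} copy of $\varphi$ and fix $a < \omega_1$. The image of the $a$-th summand is an anti-well-order of type $(\omega^{1+a})^*$, so it meets only finitely many summands of $\psi$ (infinitely many would yield an infinite increasing sequence inside it), and since $\omega^{1+a}$ is additively indecomposable one single summand $(\omega^{1+h(p)})^*$ must contain a full copy of $(\omega^{1+a})^*$, which forces $h(p) \geqslant a$. Hence the colour set of $X$ contains an ordinal $\geqslant a$ for every $a < \omega_1$ and is therefore uncountable, i.e.\ $\psi \doesntarrow [\varphi]^1_{\aleph_1, \aleph_0}$. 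The moral is that the adversary never needs to distinguish your $\beta'$ slabs: one colour per summand of $\varphi$ suffices to defeat every construction in which the copy of $\varphi_a$ is sought inside a region tailored to $\varphi_a$ alone. In this example your signature map $a \mapsto C^*_a$ is literally $a \mapsto \{a\}$, injective, so it cannot be stabilised on any set with more than one element, no matter how large the index set is made.

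This is precisely why Komj\'ath and Shelah (and the generalisation proved in Section \ref{ks_section}) take a different route. There, $\varphi$ is first embedded into a universal scattered order $P_\delta = \bigoplus'_{\gamma < \delta} S$, where $S$ is a finite-support sum of representatives of \emph{all} countable order types with designated zeros, so that no region of the target is committed in advance to a particular summand of $\varphi$; the homogeneous copy is then produced in one global step by the Ramsey-type Lemma \ref{ks_lemma} on finite decreasing sequences of ordinals, which arranges an embedding $h$ under which the colour of a point depends only on the size of its support, yielding at most $\card{\bigcup_{n<\omega} \mathrm{ran}(c(n))} \leqslant \aleph_0$ colours. Some device of this kind, replacing the block-by-block pigeonhole, is unavoidable; a second application of ordinary ordinal partition calculus to the signatures cannot substitute for it.
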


We will prove the following general\iz ation.

\begin{theorem} \label{ks_generalization}
  Suppose $\kappa$ is a cardinal such that $\kappa^{<\kappa} = \kappa$, $\varphi$ is a $\kappa$-scattered
  linear order type, and $\nu$ is a cardinal. Then there is a $\kappa$-scattered linear order type $\psi$ such that
  $\psi \arrows [\varphi]^1_{\nu, \kappa}$.
\end{theorem}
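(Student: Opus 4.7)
The plan is to prove Theorem~\ref{ks_generalization} by induction on the Hausdorff rank of $\varphi$, using Theorem~\ref{hausdorff_structure_thm}. For the base case $\varphi \in \mathcal{BL}_\kappa$: if $\card{\varphi} < \kappa$, set $\psi = \varphi$, since any $\nu$-col\ou ring uses at most $\card{\varphi} < \kappa$ col\ou rs. If $\varphi$ is a well-order of cardinality $\geqslant \kappa$, pick a regular cardinal $\mu > \max(\nu, \card{\varphi})$ and take $\psi = \mu$: given $f:\mu \to \nu$, regularity of $\mu$ together with pigeonhole yields a fibre of cardinality $\mu$, and that fibre, being a subset of the ordinal $\mu$, has order type $\geqslant \mu \geqslant \card{\varphi}$ and thus contains a monochromatic copy of $\varphi$. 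The anti-well-ordered case is symmetric, and $\psi$ is $\kappa$-scattered in both situations.

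For the inductive step, write $\varphi = \sum_{a \in \tau}\varphi_a$ with $\tau \in \mathcal{BL}_\kappa$ and each $\varphi_a$ of strictly smaller rank, and let $\psi_a$ satisfy $\psi_a \arrows [\varphi_a]^1_{\nu,\kappa}$ by induction. When $\card{\tau} < \kappa$, I take $\psi = \sum_{a \in \tau}\psi_a$, which is $\kappa$-scattered by Theorem~\ref{hausdorff_structure_thm}: given $f:\psi \to \nu$, restricting to each summand produces a $\varphi_a$-copy $Q_a \subseteq \psi_a$ with palette $P_a \in [\nu]^{\leqslant \kappa}$, so that $\bigcup_{a}Q_a$ has order type $\varphi$ and palette $\bigcup_{a}P_a$ of cardinality $\leqslant \card{\tau}\cdot\kappa = \kappa$, using that $\kappa^{<\kappa} = \kappa$ forces $\kappa$ to be regular.

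The main case---and the principal obstacle---is when $\tau$ is a well-order or anti-well-order of cardinality $\geqslant \kappa$. The na\"\i ve $\psi = \sum_{a \in \tau}\psi_a$ breaks down, since the union of $\card{\tau} \geqslant \kappa$ palettes of size $\leqslant \kappa$ can have up to $\card{\tau} > \kappa$ col\ou rs. My plan is to introduce redundancy by fixing a regular cardinal $\mu > \max(\nu^\kappa,\card{\tau})$ and taking $\psi = \sum_{a \in \tau}\psi_a\mu$, which is still $\kappa$-scattered. A first-level pigeonhole inside each block $\psi_a\mu$, based on $\card{[\nu]^{\leqslant \kappa}} \leqslant \nu^\kappa < \mu$ and the regularity of $\mu$, delivers for each $a \in \tau$ a stabilised palette $P_a^\ast \in [\nu]^{\leqslant \kappa}$ that is realised cofinally often by the $\varphi_a$-copies arising in successive copies of $\psi_a$. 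The hard part is the second-level coordination: the collection $\{P_a^\ast : a \in \tau\}$ may still have union of cardinality $\card{\tau}$, so no direct pigeonhole aligns the $P_a^\ast$. Overcoming this requires strengthening the inductive hypothesis so that each $\psi_a$ provides not just one $\varphi_a$-copy with small palette but a sufficiently rich family of such copies, allowing a single global palette $P^\ast \in [\nu]^{\leqslant\kappa}$ to be assembled along $\tau$ by a transfinite recursion of length $\kappa$ that adds new col\ou rs only as needed while preserving enough $\varphi_a$-copies at each stage. Verifying that this strengthening propagates through the large-well-ordered sum construction is where the bulk of the technical work lies.
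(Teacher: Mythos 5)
Your base case and the small-index case of the inductive step are fine (and your observation that $\kappa^{<\kappa}=\kappa$ forces $\kappa$ to be regular is correct), but the proof has a genuine gap exactly where you say ``the bulk of the technical work lies'': the case of a lexicographic sum over a well-order (or anti-well-order) $\tau$ with $\card{\tau}\geqslant\kappa$ is not actually carried out, and the device you propose does not obviously close it. To produce a copy of $\varphi=\sum_{a\in\tau}\varphi_a$ you must select a $\varphi_a$-copy in a block for \emph{every} $a\in\tau$, and all $\card{\tau}\geqslant\kappa$ of these selections must jointly realise at most $\kappa$ col\ou rs. Your first-level pigeonhole inside each block $\psi_a\mu$ only stabilises a palette $P_a^\ast$ per block; your proposed fix --- a ``transfinite recursion of length $\kappa$ that adds new col\ou rs only as needed'' --- cannot visit all $\card{\tau}>\kappa$ blocks one at a time, and if it is to succeed it must be the case that, after some initial segment, every remaining block already contains a $\varphi_a$-copy col\ou red entirely inside the accumulated palette $P^\ast$. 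Nothing in your strengthened inductive hypothesis guarantees this uniformisation, and no amount of mere ``redundancy'' (repeating $\psi_a$ $\mu$ times) supplies it: an adversarial col\ou ring can give the $\mu$ copies inside the $a$-th block palettes that all contain a col\ou r private to $a$. Note also that this problematic case recurs at transfinitely many nested levels of the rank hierarchy, so the auxiliary cardinal $\mu$ would have to be chosen coherently across all levels, which your induction does not address.

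The paper sidesteps the blockwise induction entirely. It first embeds every $\kappa$-scattered type into a member $\psi_\delta$ of a universal family $P_\delta=\bigoplus'_{\gamma<\delta}S$ of finite-support anti-lexicographic sums (Lemma \ref{second_antilex_lemma} keeps these $\kappa$-scattered), and then proves $\psi_\mu\arrows[\psi_\delta]^1_{\nu,\kappa}$ in one step via the Komj\'ath--Shelah tree lemma (Lemma \ref{ks_lemma}): after transferring the col\ou ring $H$ of $P_\mu$ to a col\ou ring $F$ of the finite decreasing sequences $\mathrm{FS}(\mu)$ with $\nu^\kappa$ col\ou rs, one extracts a $\delta$-tree along which the col\ou r of an element depends only on the length of its support, so the image of the induced embedding $h:P_\delta\to P_\mu$ meets only $\card{\bigcup_{n<\omega}\pwim{c(n)}{\prod_{i<n}S}}\leqslant\kappa$ col\ou rs. That tree lemma is precisely the missing coordination mechanism in your argument; without it, or some equivalent uniformisation principle, the long-sum case remains unproven.
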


The proof of Theorem \ref{ks_generalization} is a modification of Komj\'ath and Shelah's proof from \cite{003KS0}.
We will state without proof the main technical lemma from \cite{003KS0} but will provide the rest
of the details for completeness.

%We first show that this result is optimal. To do this, we need some notation.
Suppose $\beta$ is an ordinal,
$\langle P_\alpha \mid \alpha < \beta \rangle$ is a sequence of linear orders and, for all
$\alpha < \beta$, we have specified a designated element $0_\alpha \in P_\alpha$. Then
$\bigoplus'_{\alpha < \beta} P_\alpha$ is a linear order whose underlying set is the set of
functions $f$ such that:
\begin{itemize}
  \item $\dom(f) = \beta$;
  \item for all $\alpha < \beta$, $f(\alpha) \in P_\alpha$;
  \item for all but finitely many $\alpha < \beta$, $f(\alpha) = 0_\alpha$.
\end{itemize}
If $f \in \bigoplus'_{\alpha < \beta} P_\alpha$, then $\mathrm{supp}(f)$ is the set of $\alpha < \beta$
such that $f(\alpha) \neq 0_\alpha$. $\bigoplus'_{\alpha < \beta} P_\alpha$ is ordered anti-lexicographically.
Namely, if $f,g \in \bigoplus'_{\alpha < \beta} P_\alpha$, let $\Delta'(f,g)$ denote the largest $\alpha < \beta$
such that $f(\alpha) \neq g(\alpha)$, and let $f <_{\bigoplus'_{\alpha < \beta} P_\alpha} g$ iff $f(\alpha) <_{P_\alpha} g(\alpha)$.
As usual, if there is $P$
such that $P_\alpha = P$ for all $\alpha < \beta$, we write $\bigoplus'_{\alpha < \beta} P$
in place of $\bigoplus'_{\alpha < \beta} P_\alpha$.

\begin{lemma} \label{antilex_lemma}
  Let $\beta$ be an ordinal and, for $\alpha < \beta$, let $P_\alpha$ be a linear order. Let
  $Q = \bigoplus'_{\alpha < \beta} P_\alpha$. Suppose $f,g,h \in Q$ are such that
  $f <_Q g <_Q h$. Then $\max(\Delta'(f,g), \Delta'(g,h)) \leqslant \Delta'(f,h)$.
\end{lemma}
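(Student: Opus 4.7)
The plan is to argue by case analysis on the relative sizes of $\alpha_1 \dfeq \Delta'(f,g)$ and $\alpha_2 \dfeq \Delta'(g,h)$, showing in each case that $\Delta'(f,h)$ in fact equals $\max(\alpha_1, \alpha_2)$, so that the stated inequality holds (with equality). The key fact I will use repeatedly is that, by definition of the anti-lexicographic order on $Q$, whenever $p <_Q q$ and $\delta = \Delta'(p,q)$, one has $p(\delta) <_{P_\delta} q(\delta)$ and $p(\alpha) = q(\alpha)$ for every $\alpha > \delta$.

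First I would dispose of the case $\alpha_1 = \alpha_2 \eqdf \alpha$. At the index $\alpha$, applying the key fact twice gives $f(\alpha) <_{P_\alpha} g(\alpha) <_{P_\alpha} h(\alpha)$, so in particular $f(\alpha) \neq h(\alpha)$. For any $\gamma > \alpha$ we have $f(\gamma) = g(\gamma) = h(\gamma)$, so $\alpha$ is the largest index where $f$ and $h$ differ, giving $\Delta'(f,h) = \alpha$.

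Next I would handle $\alpha_1 > \alpha_2$. Since $g$ and $h$ agree at every index above $\alpha_2$, they agree at $\alpha_1$; combined with $f(\alpha_1) \neq g(\alpha_1)$ this yields $f(\alpha_1) \neq h(\alpha_1)$. For $\gamma > \alpha_1$, $f$ and $g$ agree, and $g$ and $h$ agree (because $\gamma > \alpha_1 > \alpha_2$), so $f$ and $h$ agree at $\gamma$. Hence $\Delta'(f,h) = \alpha_1 = \max(\alpha_1, \alpha_2)$. The case $\alpha_1 < \alpha_2$ is entirely symmetric, interchanging the roles of the pairs $(f,g)$ and $(g,h)$.

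There is no real obstacle here; the lemma is simply the statement that $\Delta'$ satisfies a strong-triangle (ultrametric-style) property with respect to triples that are monotonic in the $Q$-order, and the argument is a short bookkeeping exercise directly from the definitions of $\Delta'$ and of $<_Q$.
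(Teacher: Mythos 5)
Your proof is correct. It takes a mildly different route from the paper's: the paper argues by contradiction, setting $\alpha = \Delta'(f,h)$, supposing $\gamma = \Delta'(f,g) > \alpha$, deducing from $f(\gamma) = h(\gamma)$ that $h(\gamma) <_{P_\gamma} g(\gamma)$, hence that $\xi = \Delta'(g,h) > \gamma$, and then reading off $g(\xi) \ne f(\xi)$ with $\xi > \gamma$, which contradicts $\Delta'(f,g) = \gamma$. You instead run a direct trichotomy on $\Delta'(f,g)$ versus $\Delta'(g,h)$ and compute $\Delta'(f,h)$ exactly, obtaining the stronger conclusion $\Delta'(f,h) = \max\big(\Delta'(f,g), \Delta'(g,h)\big)$. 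Your organization also makes visible precisely where the hypothesis $f <_Q g <_Q h$ is used: in the two cases $\Delta'(f,g) \ne \Delta'(g,h)$ the argument is the pure ultrametric property of $\Delta'$, valid for any triple of pairwise distinct elements, and monotonicity is needed only when $\Delta'(f,g) = \Delta'(g,h) = \alpha$, where it forces $f(\alpha) <_{P_\alpha} g(\alpha) <_{P_\alpha} h(\alpha)$ and hence $f(\alpha) \ne h(\alpha)$. Both arguments are short bookkeeping from the definitions of $\Delta'$ and $<_Q$; yours buys the equality and the sharper accounting of hypotheses, while the paper's is marginally more compact.
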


\begin{proof}
  Let $\alpha = \Delta'(f,h)$. Suppose for sake of contradiction that $\gamma = \Delta'(f,g) > \alpha$.
  Then $f(\gamma) <_{P_{\gamma}} g(\gamma)$, and, since $f(\gamma) = h(\gamma)$, we also have $h(\gamma) <_{P_\gamma} g(\gamma)$.
  Since $g <_Q h$, we must have $\xi = \Delta'(g,h) > \gamma$. Then $g(\xi) <_{P_\xi} h(\xi) = f(\xi)$, contradicting
  the fact that $\xi > \gamma$ and $\Delta'(f,g) = \gamma$. Thus, $\Delta'(f,g) \leqslant \alpha$. $\Delta'(g,h) \leqslant \alpha$
  follows similarly.
\end{proof}

\begin{lemma} \label{second_antilex_lemma}
  Suppose $\beta$ is an ordinal, $\kappa$ is a regular cardinal, and, for all $\alpha < \beta$, $P_\alpha$
  is a $\kappa$-scattered linear order. Let $Q = \bigoplus'_{\alpha < \beta} P_\alpha$. Then $Q$ is
  $\kappa$-scattered.
\end{lemma}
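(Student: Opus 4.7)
The plan is to induct on $\beta$. For $\beta = 0$, $Q$ is a singleton and the claim is trivial. Assume the claim for all $\beta' < \beta$.

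For the successor step $\beta = \gamma + 1$, observe that $Q$ is order-isomorphic to the lexicographic sum $\sum_{p \in P_\gamma} Q^p$, where each $Q^p$ is a copy of $Q' \dfeq \bigoplus'_{\alpha < \gamma} P_\alpha$, indexed by $P_\gamma$ in its natural order (since $Q$ is ordered anti-lexicographically, the coordinate $\gamma$ is the decisive one). By the inductive hypothesis, $Q'$ is $\kappa$-scattered, and $P_\gamma$ is $\kappa$-scattered by hypothesis. A straightforward induction on the construction of $\kappa$-scattered order types given by Theorem \ref{hausdorff_structure_thm} shows that the lexicographic sum of $\kappa$-scattered order types indexed by a $\kappa$-scattered order type is again $\kappa$-scattered, so $Q$ is $\kappa$-scattered.

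For the limit step, suppose toward a contradiction that $D \subseteq Q$ is a $\kappa$-dense suborder. Choose any $f_0 <_Q f_1$ in $D$ and let $\alpha_0 = \Delta'(f_0, f_1) < \beta$. For every $g \in D \cap \opin{f_0}{f_1}{Q}$, Lemma \ref{antilex_lemma} gives $\max(\Delta'(f_0, g), \Delta'(g, f_1)) \leqslant \alpha_0$, hence $g(\gamma) = f_0(\gamma) = f_1(\gamma)$ for all $\gamma > \alpha_0$. Define $\iota : D \cap \opin{f_0}{f_1}{Q} \longrightarrow \bigoplus'_{\alpha < \alpha_0 + 1} P_\alpha \eqdf Q^\dagger$ by setting $\iota(g)(\gamma) = g(\gamma)$ for $\gamma \leqslant \alpha_0$ and $\iota(g)(\gamma) = 0_\gamma$ otherwise. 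Since the differences that determine the order between any two elements of $D \cap \opin{f_0}{f_1}{Q}$ lie at coordinates $\leqslant \alpha_0$, $\iota$ is an order-embedding into $Q^\dagger$.

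Finally, since $D$ is $\kappa$-dense, its open interval $D \cap \opin{f_0}{f_1}{Q}$ is also $\kappa$-dense (every sub-interval inherits size at least $\kappa$ from $D$). Hence $\iota[D \cap \opin{f_0}{f_1}{Q}]$ is a $\kappa$-dense suborder of $Q^\dagger = \bigoplus'_{\alpha < \alpha_0+1} P_\alpha$, which is $\kappa$-scattered by the inductive hypothesis since $\alpha_0 + 1 < \beta$. This is the desired contradiction, completing the induction. The main conceptual point is the use of Lemma \ref{antilex_lemma} to collapse the limit case into the successor case via a fixed-tail projection; the remaining work lies in verifying the standard closure of $\kappa$-scatteredness under lexicographic sums indexed by $\kappa$-scattered types, which is a routine consequence of Theorem \ref{hausdorff_structure_thm}.
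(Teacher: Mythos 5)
Your proof is correct, but it takes a genuinely different route from the paper's. The paper gives a direct, induction-free argument: given a $\kappa$-dense suborder $R$ of $Q$, it picks the \emph{least} ordinal $\alpha$ arising as $\Delta'(f,g)$ for some $f,g\in R$, restricts to the interval $\opin{f}{g}{R}$ (still $\kappa$-dense), and observes via Lemma \ref{antilex_lemma} together with the minimality of $\alpha$ that every pair in that interval has $\Delta'$ exactly $\alpha$ --- so the single-coordinate projection $h\mapsto h(\alpha)$ order-embeds a $\kappa$-dense order into $P_\alpha$, a contradiction. Your limit step is the same ``collapse to the decisive coordinates'' idea, except you take the \emph{first} pair you see rather than a $\Delta'$-minimizing one, and consequently you only land in the initial segment $\bigoplus'_{\alpha<\alpha_0+1}P_\alpha$ rather than in a single $P_\alpha$; this forces you to set up an induction on $\beta$ and to handle successors separately via the decomposition $Q\cong\sum_{p\in P_\gamma}Q'$ and the closure of $\kappa$-scatteredness under lexicographic sums with $\kappa$-scattered index set. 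That closure fact is true and does follow by a routine induction from Theorem \ref{hausdorff_structure_thm} (or by a direct fiber argument in the style of Lemma \ref{product_lemma}), so there is no gap; but the paper's choice of a minimal $\alpha$ lets it bypass both the induction and the appeal to the structure theorem entirely. What your version buys is a somewhat more modular argument (the lex-sum closure lemma is of independent use); what it costs is length and an extra unproved-but-standard ingredient.
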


\begin{proof}
  Suppose for sake of contradiction that $R$ is a $\kappa$-dense suborder of $Q$. Let $\alpha < \beta$
  be least such that, for some $f,g \in R$, $\Delta'(f,g) = \alpha$, and fix such $f,g \in R$.
  As $R$ is $\kappa$-dense, $\opin{f}{g}{R}$ itself must be $\kappa$-dense as a suborder of $R$. By the minimality of $\alpha$
  and Lemma \ref{antilex_lemma}, for all $h_0 <_R h_1$ in $\opin{f}{g}{R}$, we must have
  $\Delta'(h_0, h_1) = \alpha$ and hence $h_0(\alpha) <_{P_\alpha} h_1(\alpha)$. But this implies that
  $\{h(\alpha) \mid h \in R\}$ is a $\kappa$-dense suborder of $P_\alpha$, contradicting the assumption that
  $P_\alpha$ is $\kappa$-scattered.
\end{proof}

We now need some definitions and a lemma from \cite{003KS0}.

\begin{definition}
  Suppose $\alpha$ is an ordinal. $\mathrm{FS}(\alpha)$ is the set of all finite decreasing sequences
  from $\alpha$, i.e.\ sequences of the form $\vec{s} = \langle s_0, s_1, \ldots, s_{n-1} \rangle$ such
  that $\alpha > s_0 > s_1 > \ldots > s_{n-1}$. For such an $\vec{s}$ of length $n$, $\min(\vec{s})$ will
  denote $s_{n-1}$. An \emph{$\alpha$-tree} is a function $x:\mathrm{FS}(\alpha) \rightarrow \mathrm{On}$
  such that, for all $\vec{s} \in \mathrm{FS}(\alpha)$ and all $\gamma_0 < \gamma_1 < \min(\vec{s})$,
  we have $x(\vec{s}$ ${^\frown} \langle \gamma_0 \rangle) < x(\vec{s}$ ${^\frown} \langle \gamma_1 \rangle)
  < x(\vec{s})$.
\end{definition}

\begin{lemma} \label{ks_lemma}
  Suppose $\alpha$ is an ordinal and $\nu$ is a cardinal. Let $\mu = {\big(\card{\alpha}^{\nu^{\aleph_0}}\big)}^+$, and
  suppose that $F:\mathrm{FS}(\mu^+) \rightarrow \nu$. Then there is an $\alpha$-tree $x:\mathrm{FS}(\alpha)
  \rightarrow \mu^+$ and a function $c:\omega \rightarrow \nu$ such that, for every $n < \omega$ and every
  $\langle s_0, s_1, \ldots, s_n \rangle$ in $\mathrm{FS}(\alpha)$ of length $n+1$, we have
  $F(x(\langle s_0 \rangle), x(\langle s_0, s_1 \rangle), \ldots, x(\langle s_0, s_1, \ldots, s_n \rangle)) = c(n)$.
\end{lemma}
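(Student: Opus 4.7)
The plan is to prove Lemma \ref{ks_lemma} by an Erd\H{o}s-Rado--style tree construction carried out inside a nested chain of elementary submodels. Set $\lambda \dfeq \card{\alpha}^{\nu^{\aleph_0}}$, so that $\mu = \lambda^+$, and observe that $\lambda^{\aleph_0} = \big(\card{\alpha}^{\nu^{\aleph_0}}\big)^{\aleph_0} = \card{\alpha}^{\nu^{\aleph_0}} = \lambda$, so $\lambda$ is closed under countable sequences.

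First, I would fix a sufficiently large regular cardinal $\theta$ and build an increasing $\in$-chain $\seq{M_\xi}{\xi \leqslant \alpha}$ of elementary submodels of $\opair{H(\theta)}{\in}$, each of size $\lambda$, with $\alpha, \nu, F, \mu^+ \in M_0$ and $\seq{M_\eta}{\eta \leqslant \xi} \in M_{\xi + 1}$ for each $\xi < \alpha$; the countable closure of $\lambda$ lets each $M_\xi$ be closed under $\omega$-sequences. Since $\card{M_\xi} = \lambda < \mu^+$, the intersection $M_\xi \cap \mu^+$ is bounded in $\mu^+$, say with supremum $\gamma_\xi < \mu^+$, and these $\gamma_\xi$'s are strictly increasing in $\xi$.

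Next, construct the $\alpha$-tree $x$ top-down. For each $s_0 < \alpha$, pick an ordinal $x(\langle s_0 \rangle) \in \mu^+ \setminus M_{s_0}$, chosen so that the values are strictly increasing in $s_0$ (taking, for instance, values just above $\gamma_{s_0}$). For deeper levels, proceed by recursion on the length of sequences: given that $x$ has been defined on all sequences of length at most $n$ and that tentative colours $c(0), \dots, c(n-1)$ have been determined, extend $x$ to length $n+1$ by invoking elementarity inside an appropriate $M_\xi$. The relevant first-order statement is: ``there exist $\min(\vec{s})$-many ordinals strictly below $x(\vec{s})$, increasing with their index, such that $F$ of the resulting branch sequence takes a common value $c(n)$''. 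This statement holds in $H(\theta)$ by a pigeonhole over the $\mu^+$-many candidate ordinals and only $\nu$ possible colours, and hence the required witnesses exist in $M_\xi$ by elementarity, where they can be used to define $x(\vec{s}\,{^\frown}\langle t\rangle)$ for all $t<\min(\vec{s})$ simultaneously.

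The principal obstacle is the coordination across branches: the local pigeonhole at each $\vec{s}$ yields a colour $c_{\vec{s}}(n)$, but the lemma demands a single $c(n)$ common to \emph{all} length-$(n+1)$ branches. The resolution is to stage the construction inside the nested models so that each commitment made at a shallower level is visible inside the smaller model $M_{s_0}$, forcing deeper extractions to respect it; this lets one pigeonhole not just on single branches but on the entire ``type'' of $F$'s behaviour along all branches simultaneously, with the number of types bounded by $\nu^{\card{\mathrm{FS}(\alpha)}} \leqslant \nu^{\card{\alpha}^{\aleph_0}} \leqslant \lambda$. The cardinal arithmetic $\mu = \big(\card{\alpha}^{\nu^{\aleph_0}}\big)^+$ is tight here: the $\card{\alpha}$-factor absorbs the branching of $\mathrm{FS}(\alpha)$, the $\nu^{\aleph_0}$-factor absorbs the $\omega$-indexed family of possible colours along a branch, and the successor ensures the strict inequality needed for the pigeonhole at each stage.
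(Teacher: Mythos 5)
First, a note on the comparison itself: the paper does not prove Lemma~\ref{ks_lemma} --- it states it and explicitly defers to Komj\'ath and Shelah \cite{003KS0} --- so your proposal has to stand on its own, and as written it has a genuine gap. You correctly isolate the crux (each local pigeonhole yields a colour $c_{\vec s}(n)$ depending on the node, whereas the lemma demands a single $c(n)$ common to all length-$(n+1)$ branches), but the resolution you offer does not work. The global pigeonhole over ``types of $F$'s behaviour along all branches'' is miscounted: the number of such types is $\nu^{\card{\mathrm{FS}(\alpha)}}=\nu^{\card{\alpha}}$, and this need not be $\leqslant\lambda=\card{\alpha}^{\nu^{\aleph_0}}$. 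For instance, take $\nu=2$ and $\card{\alpha}$ a strong limit cardinal of cofinality greater than $2^{\aleph_0}$: then $\lambda=\card{\alpha}$ while $\nu^{\card{\alpha}}=2^{\card{\alpha}}>\mu^+$, so no pigeonhole over $\mu^+$-many candidates is available. Your reading of the cardinal arithmetic is inverted: $\card{\alpha}^{\nu^{\aleph_0}}$ counts functions \emph{from} the set of colour sequences $c\in{}^{\omega}\nu$ \emph{into} (roughly) $\alpha$. The invariant that makes such arguments work is attached to a single node $\vec t$ of $\mathrm{FS}(\mu^+)$ and records, for each candidate $c\in{}^{\omega}\nu$, the supremum of the ordinals $\beta\leqslant\alpha$ for which a $(c,\beta)$-homogeneous subtree can be grown below $\vec t$; this is a function from ${}^{\omega}\nu$ to $\alpha+2$, of which there are only about $\lambda$, and one pigeonholes on that rank function rather than on colourings of all of $\mathrm{FS}(\alpha)$ at once.

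Two further symptoms that the mechanism of the proof is missing. (i) Your local step --- finding $\min(\vec s)$-many ordinals below $x(\vec s)$ on which the one-step extension of $F$ is constant --- yields only end-homogeneity (colour depending on the node), which is strictly weaker than the level-homogeneity asserted; iterating it reproduces, rather than solves, the coordination problem. Moreover, the first-order statement you propose to reflect mentions the parameter $x(\vec s)$, which you deliberately chose \emph{outside} $M_{s_0}$, so elementarity does not apply to it as written; and the chain $\seq{M_\xi}{\xi\leqslant\alpha}$ increases in $\xi$ while the tree branches \emph{downward} in the index ($s_1<s_0<\alpha$), so the nesting does not align with the tree structure in the way the sketch requires. (ii) The hypothesis colours $\mathrm{FS}(\mu^+)=\mathrm{FS}(\lambda^{++})$, two successors above $\lambda$; a proof consisting of a single pigeonhole of $\lambda$-many types among $\mu^+$-many candidates would already go through on $\mathrm{FS}(\lambda^+)$. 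The second successor is consumed by the transfinite rank recursion sketched above, and a proposal that never explains where both pluses are spent has not yet located the actual argument. I would direct you to the proof in \cite{003KS0}, which is where the paper itself sends the reader.
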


A proof of Lemma \ref{ks_lemma} can be found in \cite{003KS0}.

Fix now a cardinal $\kappa$ such that $\kappa^{<\kappa} = \kappa$. We next identify a class of $\kappa$-scattered
linear orders such that every $\kappa$-scattered linear order embeds into a member of the class. First, let
$\mathcal{T}$ be a set of linear orders such that, for every linear order type $\tau$ of size $<\kappa$,
there is a unique $T \in \mathcal{T}$ such that $\otp(T) = \tau$. Now, let $\langle \opair{T_\alpha}{0_\alpha}
\mid \alpha < \kappa \rangle$ enumerate all pairs $\opair{T}{a}$ such that $T \in \mathcal{T}$ and $a \in T$.
For the rest of the section, let $S$ denote $\bigoplus'_{\alpha < \kappa} T_\alpha$. If $\delta$ is an ordinal,
let $P_\delta = \bigoplus'_{\gamma < \delta} S$. We will sometimes think of elements of
$P_\delta$ as functions $f$ such that $\dom(f) = \delta \times \kappa$,
$f(\gamma, \alpha) \in T_\alpha$ for all $\opair{\gamma}{\alpha} \in \delta \times \kappa$, and, for all but
finitely many $\opair{\gamma}{\alpha} \in \delta \times \kappa$, $f(\gamma, \alpha) = 0_\alpha$. If $f,g$ are two
such functions, then $\Delta'(f,g)$ is the lexicographically largest $\opair{\gamma}{\alpha} \in \delta \times \kappa$
such that $f(\gamma, \alpha) \neq g(\gamma, \alpha)$, and $f <_{P_\delta} g$ iff $f(\gamma, \alpha) <_{T_\alpha}
g(\gamma, \alpha)$. By Lemma \ref{second_antilex_lemma}, for every ordinal $\delta$, $P_\delta$ is $\kappa$-scattered.
Let $\psi_\delta = \otp(P_\delta)$.

\begin{lemma}
  For every $\kappa$-scattered linear order $\varphi$, there is $\delta$ such that $\varphi \leqslant \psi_\delta$.
\end{lemma}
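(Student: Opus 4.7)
The plan is to prove the Lemma by induction on the Hausdorff rank of $\varphi$ supplied by Theorem \ref{hausdorff_structure_thm}, combined with an easily verified multiplication formula for the $\psi_\delta$. First I would handle the base case $\varphi \in \mathcal{BL}_\kappa$. If $\card{\varphi} < \kappa$, then $\varphi = \otp(T_\alpha)$ for some $\alpha < \kappa$ by the choice of the enumeration, and $\varphi$ embeds into $S = P_1$ by sending each $t \in T_\alpha$ to the function $f_t \in S$ that takes value $t$ at coordinate $\alpha$ and $0_\beta$ at every other coordinate $\beta < \kappa$; the anti-lex ordering on these functions agrees with $<_{T_\alpha}$, so $\varphi \leqslant \psi_1$. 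If $\varphi$ is a well-order of length $\alpha$, I would fix some $s_0 \in S$ with $0_S <_S s_0$ (available because infinitely many $T_\beta$ have more than one element) and embed $\alpha$ into $P_\alpha$ by sending $\gamma < \alpha$ to the finite-support function that equals $s_0$ at coordinate $\gamma$ and $0_S$ elsewhere; the anti-lex comparison at the maximum differing coordinate yields $f_{\gamma_0} <_{P_\alpha} f_{\gamma_1}$ exactly when $\gamma_0 < \gamma_1$. For an anti-well-order of length $\alpha$, the same recipe with some $s_1 <_S 0_S$ in place of $s_0$ produces a copy of $\alpha^*$ in $P_\alpha$.

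For the inductive step, suppose $\tau \in \mathcal{BL}_\kappa$, $\varphi = \sum_{a \in \tau} \rho_a$, and each $\rho_a$ satisfies the Lemma, say $\rho_a \leqslant \psi_{\delta_a}$. The key auxiliary observation is the decomposition $\psi_{\delta_0 + \delta_1} = \psi_{\delta_0}\psi_{\delta_1}$: each $f \in P_{\delta_0 + \delta_1}$ splits into $f \restriction \delta_0 \in P_{\delta_0}$ and the reindexed $f \restriction \roin{\delta_0}{\delta_0 + \delta_1} \in P_{\delta_1}$, and under the anti-lex order the upper half dominates, so $P_{\delta_0 + \delta_1}$ is order-isomorphic to the lex sum of $P_{\delta_0}$'s indexed by $P_{\delta_1}$. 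In particular, appending $0_S$'s shows that $\delta \leqslant \delta'$ implies $\psi_\delta \leqslant \psi_{\delta'}$, so I can set $\delta^* \dfeq \sup_{a \in \tau} \delta_a$ and conclude $\varphi \leqslant \psi_{\delta^*}\tau$. Since $\tau \in \mathcal{BL}_\kappa$, the base case produces an ordinal $\epsilon$ with $\tau \leqslant \psi_\epsilon$, and the decomposition formula then yields $\psi_{\delta^*}\tau \leqslant \psi_{\delta^*}\psi_\epsilon = \psi_{\delta^* + \epsilon}$, so $\delta \dfeq \delta^* + \epsilon$ works. The limit stage of the Hausdorff hierarchy is immediate, since any $\varphi$ at a limit level already appears at some earlier level.

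The main obstacle I anticipate is not conceptual but bookkeeping: verifying carefully that the anti-lex decomposition really turns ordinal addition on the indices into lexicographic multiplication on the $\psi_\delta$, and checking that the "unit step" embeddings in the well-order and anti-well-order base cases respect the finite support requirement for all lengths $\alpha$, including infinite ones (which they do, because each $f_\gamma$ has support of size exactly one).
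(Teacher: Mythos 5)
Your proof is correct, and it follows the same overall strategy as the paper (induction along the Hausdorff hierarchy of Theorem \ref{hausdorff_structure_thm}), but the inductive step is organised differently. The paper builds the embedding of $\sum_{\xi<\beta}\varphi_\xi$ into $P_{\delta'+\beta}$ by hand: it reserves a two-element order $T_{\alpha'}$ as a ``marker'' and sends the point $\opair{\xi}{a}$ to the function that agrees with $F_\xi(a)$ on $\delta'\times\kappa$ and carries the marker value $1_{\alpha'}$ at coordinate $\opair{\delta'+\xi}{\alpha'}$; the anti-well-ordered and small index cases are then declared similar. You instead isolate the identity $\psi_{\delta_0+\delta_1}=\psi_{\delta_0}\psi_{\delta_1}$ (which is correct: in the anti-lexicographic order the top block $\roin{\delta_0}{\delta_0+\delta_1}$ dominates, so $P_{\delta_0+\delta_1}$ is the lexicographic sum of copies of $P_{\delta_0}$ indexed by $P_{\delta_1}$) together with the monotonicity $\delta\leqslant\delta'\Rightarrow\psi_\delta\leqslant\psi_{\delta'}$, and you treat the well-order and anti-well-order members of $\mathcal{BL}_\kappa$ explicitly in the base case via single-support indicator functions. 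Your marker element $s_0>_S 0_S$ (resp.\ $s_1<_S 0_S$) plays exactly the role of the paper's $1_{\alpha'}$, and it does exist because $\mathcal{T}$ contains a two-element order with either choice of designated point. The net effect is that your inductive step reduces uniformly to ``$\tau\leqslant\psi_\epsilon$ for $\tau\in\mathcal{BL}_\kappa$'' and then composes, which handles all three index-set cases at once and makes the bookkeeping (finite supports, position of the largest differing coordinate) essentially automatic; the paper's version avoids stating the multiplication formula but has to repeat the argument for each shape of index set. Both yield the same bound, an embedding into $P_{\delta^*+\epsilon}$ with $\delta^*=\sup_a\delta_a$.
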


\begin{proof}
  We proceed by induction. If $\card{\varphi} < \kappa$, then there is $\alpha < \kappa$ such that
  $\otp(T_\alpha) = \varphi$. But then $\varphi \leqslant \otp(S) = \psi_1$. Suppose $\beta$ is
  an ordinal, $\varphi = \sum_{\xi < \beta} \varphi_\xi$ and, for each $\xi < \beta$, there is $\delta_\xi$
  such that $\varphi_\xi \leqslant \otp(P_{\delta_\xi})$. For $\xi < \beta$, let $Q_\xi$ be an order of type
  $\varphi_\xi$, and let $Q = \sum_{\xi < \beta} Q_\xi$, so $\otp(Q) = \varphi$.. Let $\delta' = \sup\soast{\delta_\xi}{\xi < \beta}$, and,
  for all $\xi < \beta$, fix an order embedding $F_\xi:Q_\xi \rightarrow P_{\delta'}$. Let $\alpha' <
  \kappa$ be such that $T_{\alpha'}$ is a 2-element linear order and $0_{\alpha'}$ is the smaller of the
  elements (call the other $1_{\alpha'}$). Let $\delta = \delta' + \beta$. Define $F:Q \rightarrow P_\delta$
  as follows. If $\xi < \beta$ and $a \in Q_\xi$, let $F(\xi, a) \in P_\delta$ be defined as
  follows.
  \[
    F(\xi, a)(\gamma, \alpha) =
    \begin{cases}
      F_\xi(a)(\gamma, \alpha) & \mbox{if } \opair{\gamma}{\alpha} \in \delta' \times \kappa \\
      1_{\alpha'} & \mbox{if } \opair{\gamma}{\alpha} = \opair{\delta' + \xi}{\alpha'} \\
      0_\alpha & \mbox{otherwise} \\
    \end{cases}
  \]
  It is easy to check that $F$ is an order embedding. The cases in which $\varphi$ is a
  lexicographic sum whose index set is anti-well-ordered or of size $<\kappa$ are similar.
\end{proof}

Theorem \ref{ks_generalization} will therefore follow from the following lemma.

\begin{lemma}
  For every ordinal $\delta$ and every cardinal $\nu$, there is $\mu$ such that
  $\psi_\mu \arrows [\psi_\delta]^1_{\nu, \kappa}$.
\end{lemma}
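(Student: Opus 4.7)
The plan is to follow Komj\'ath--Shelah's strategy from \cite{003KS0}, using Lemma \ref{ks_lemma} as the combinatorial engine, but adjusting bookkeeping to accommodate $\kappa$-many colours in the conclusion rather than $\aleph_0$-many. Fix $\delta$ and $\nu$, and aim to choose $\mu$ sufficiently large via Lemma \ref{ks_lemma} so that $\psi_{\mu} \arrows [\psi_\delta]^1_{\nu, \kappa}$. Recall that any $f \in P_\mu$ is coded by its support $\mathrm{supp}(f) \subseteq \mu \times \kappa$ (a finite set) together with values $f(\gamma, \alpha) \in T_\alpha \setminus \{0_\alpha\}$. Writing the projection of $\mathrm{supp}(f)$ to $\mu$ in decreasing order as $\gamma_0 > \gamma_1 > \cdots > \gamma_{n-1}$, the remaining data (the $\kappa$-coordinates attached to each $\gamma_i$ and the values in the corresponding $T_\alpha$'s) comprise what I will call the \emph{type} $\theta(f)$ of $f$, a finite object living in a set of size at most $\kappa^{<\omega} \leqslant \kappa^{<\kappa} = \kappa$. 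Enumerate the possible types as $\langle \theta_\xi : \xi < \kappa \rangle$.

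Given a colouring $F : P_\mu \to \nu$, package it into an auxiliary colouring $G : \mathrm{FS}(\mu) \to \nu^{\kappa}$ by setting $G(\vec{s})(\xi) = F(f_{\vec{s}, \theta_\xi})$, where $f_{\vec{s}, \theta_\xi}$ is the unique element of $P_\mu$ with projected support $\vec{s}$ and type $\theta_\xi$ (when this combination is well-defined). Applying a variant of Lemma \ref{ks_lemma} with codomain $\bar\nu \dfeq \nu^\kappa$ in place of $\nu$ (this forces the initial choice $\mu = \bigl(|\delta|^{\bar\nu^{<\kappa}}\bigr)^+$, or one iterates the original lemma $\kappa$-many times, each step refining the tree) produces a $\delta$-tree $x : \mathrm{FS}(\delta) \to \mu$ and a function $c : \omega \to \bar\nu$ such that for every $\vec{s} = \langle s_0, \ldots, s_n\rangle \in \mathrm{FS}(\delta)$, $G\bigl(x(\langle s_0\rangle), x(\langle s_0, s_1\rangle), \ldots, x(\vec{s})\bigr) = c(n)$, so that the resulting level-dependent colour pattern is $\aleph_0$ in number, each a function in $\nu^\kappa$.

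Step three is to use the tree $x$ to define an order-preserving embedding $\Phi : P_\delta \to P_\mu$. For $f \in P_\delta$, let $\gamma_0 > \cdots > \gamma_{n-1}$ be the decreasing enumeration of the projection of $\mathrm{supp}(f)$ onto $\delta$; then define $\Phi(f) \in P_\mu$ to have projected support $\{x(\langle\gamma_0\rangle), x(\langle\gamma_0, \gamma_1\rangle), \ldots, x(\langle\gamma_0, \ldots, \gamma_{n-1}\rangle)\}$, carrying over the type $\theta(f)$ unchanged to these points (so $\Phi(f) = f_{\vec{u}, \theta(f)}$ where $u_i = x(\vec{\gamma}^f \upharpoonright (i+1))$). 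Order preservation is verified by examining the largest coordinate of disagreement between $\Phi(f)$ and $\Phi(g)$ and using the tree property $x(\vec{t}^\frown\langle a \rangle) < x(\vec{t}^\frown\langle b \rangle) < x(\vec{t})$ for $a < b < \min(\vec{t})$, together with Lemma \ref{antilex_lemma} applied inside both $P_\delta$ and $P_\mu$: whenever $f <_{P_\delta} g$ disagree first at some $(\gamma, \alpha)$, the corresponding image coordinates produced by $x$ are arranged so that $\Phi(f) <_{P_\mu} \Phi(g)$.

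Finally, for colour counting, observe that $F(\Phi(f)) = G(\vec{u})(\xi) = c(n-1)(\xi)$, where $n$ is the length of $f$'s projected support and $\xi$ is the index of $\theta(f)$; thus $F \circ \Phi$ depends only on the pair $(n, \xi) \in \omega \times \kappa$, giving $|F[\Phi[P_\delta]]| \leqslant \aleph_0 \cdot \kappa = \kappa$, as desired. The main obstacle I anticipate is technical rather than conceptual: establishing the $\kappa$-version of Lemma \ref{ks_lemma} needed for the auxiliary colouring into $\nu^\kappa$ (or equivalently, running the iteration $\kappa$-many times while keeping the resulting tree broad enough to still embed $P_\delta$), together with the careful case analysis in Step 3 when multiple support points of $f$ share the same $\gamma$-coordinate, which requires using a tree indexed by $\delta \cdot \kappa$ rather than $\delta$ itself or embedding the type information into a slight modification of $\Phi$.
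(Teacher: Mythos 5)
Your proposal is correct and follows essentially the same route as the paper: encode the non-$\mu$ data of an element of $P_\mu$ as an element of $\prod_{i < n} S$ (your ``type''), colour $\mathrm{FS}(\mu)$ with $\nu^\kappa$ colours, apply Lemma \ref{ks_lemma}, and transfer the colouring back along the resulting $\delta$-tree via an order-preserving embedding. Both of your anticipated obstacles dissolve: Lemma \ref{ks_lemma} is already stated for an arbitrary cardinal $\nu$ of colours, so one simply applies it with $\nu^\kappa$ colours and $\mu = \bigl(\card{\delta}^{\nu^\kappa}\bigr)^{++}$ (no $\kappa$-fold iteration or new variant is needed), and since $P_\delta = \bigoplus'_{\gamma < \delta} S$ the single value $f(\gamma) \in S$ already packages all the $\kappa$-coordinates attached to $\gamma$, so the tree need only be indexed by $\mathrm{FS}(\delta)$ and no case of shared $\gamma$-coordinates arises.
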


\begin{proof}
  Fix an ordinal $\delta$ and a cardinal $\nu$. Let $\mu = {\big(\card{\delta}^{\nu^\kappa}\big)}^{++}$.
  Let $H:P_\mu \rightarrow \nu$. We define a col\ou ring $F$ of $\mathrm{FS}(\mu)$ as
  follows. Suppose $\vec{s} = \langle s_0, \ldots, s_{n-1} \rangle \in \mathrm{FS}(\mu)$.
  Then $F(\vec{s})$ is the function defined on $\prod_{i < n} S$ defined as follows. If
  $\vec{a} = \langle a_0, \ldots, a_{n-1} \rangle \in \prod_{i < n} S$, then
  $F(\vec{s})(\vec{a}) = H(f)$, where $f \in P_\mu$ is such that
  $\mathrm{supp}(f) \subseteq \{s_0, \ldots, s_{n-1}\}$ and, for all $i < n$,
  $f(s_i) = a_i$. Since $\card{S} = \kappa$, $F$ is a col\ou ring with $\nu^\kappa$ col\ou rs.
  Therefore, by Lemma \ref{ks_lemma}, there is a $\delta$-tree $x:\mathrm{FS}(\delta)
  \rightarrow \mu$ and a function $c$ with domain $\omega$ such that, for every
  $n < \omega$ and every $\langle s_0, \ldots, s_{n-1} \rangle \in \mathrm{FS}(\delta)$,
  $F(x(\langle s_0 \rangle), \ldots, x(\langle s_0, \ldots, s_{n-1} \rangle)) = c(n)$.

  Fix such an $x$ and $c$. Define a function $h:P_\delta \rightarrow P_\mu$
  as follows. Suppose $f \in P_\delta$ and $\mathrm{supp}(f) = \langle s_0, \ldots,
  s_{n-1} \rangle$, listed in decreasing order. For each $i < n$, let $t_i =
  x(\langle s_0, \ldots, s_i \rangle)$. Let $h(f) \in P_\mu$ be such that
  $\mathrm{supp}(h(f)) = \soast{t_i}{i < n}$ and such that, for all $i < n$,
  $h(f)(t_i) = f(s_i)$.

  \begin{claim*}
    $h$ is order-preserving.
  \end{claim*}

  \begin{proof}
    Suppose $f,f' \in P_\delta$ with $f <_{P_\delta} f'$. Let $\mathrm{supp}(f) =
    \langle s_0, \ldots, s_{m-1} \rangle$ and $\mathrm{supp}(f') = \langle s'_0, \ldots,
    s'_{n-1} \rangle$, both listed in decreasing order. Let $i$ be least such that
    either $s_i \neq s'_i$ or $f(s_i) \neq f(s'_i)$.

    Suppose first that $s_i \neq s'_i$. Without loss of generality, assume
    $s_i < s'_i$ (the argument in the other case is symmetric). In particular, since
    $f <_{P_\delta} f'$, we have $0 <_S f'(s'_i)$. Then $x(\langle s_0, \ldots, s_i
    \rangle) < x(\langle s_0, \ldots, s'_i \rangle)$, so, by our definition of $h$,
    $\Delta'(h(f), h(f')) = x(\langle s_0, \ldots, s'_i \rangle) \eqdf t$ and
    $h(f)(t) = 0 <_S f'(s'_i) = h(f')(t)$, so $h(f) <_{P_\mu} h(f')$.

    If $s_i = s'_i$ and $f(s_i) <_S f(s'_i)$, let $x(\langle s_0, \ldots, s_i \rangle) = t$.
    Again, $\Delta'(h(f), h(f')) = t$ and $h(f)(t) = f(s_i) <_S f'(s_i) = h(f')(t)$, so
    $h(f) <_{P_\mu} h(f')$.
  \end{proof}

  Now let $P' = \pwim{h}{P_\delta}$. Then $P' \subseteq P_\mu$ and $\otp(P')
  = \psi_\delta$. We claim that $\card{\pwim{H}{P'}} \leqslant \kappa$.

  Recall that, for $n < \omega$, $c(n)$ is a function from $\prod_{i < n} S$
  to $\nu$. Let $A_n = \pwim{c}{\prod_{i < n} S}$, and let $A = \bigcup_{n < \omega} A_n$.
  Since $\card{S} = \kappa$, we have $\card{A} = \kappa$. We claim that
  $\pwim{H}{P'} \subseteq A$. To see this, let $g \in P'$. By definition, there is
  $f \in P_\delta$ such that $g = h(f)$. Let $\vec{s} = \langle s_0, \ldots, s_{n-1} \rangle =
  \mathrm{supp}(f)$ and, for $i < n$, $t_i = x(\langle s_0, \ldots, s_i \rangle)$, so
  $\mathrm{supp}(g) = \langle t_0, \ldots, t_{n-1} \rangle$. Let $\vec{a} = \langle a_0, \ldots,
  a_{n-1} \rangle$ be such that, for $i < n$, $f(s_i) = a_i = g(t_i)$. Retracing the definitions,
  we have:
  \[
    H(f) = F(\vec{t})(\vec{a}) = F(x(\langle s_0 \rangle), \ldots, x(\langle s_0, \ldots,
  s_{n-1} \rangle))(\vec{a}) = c(n)(\vec{a}) \in A_n \subseteq A.
  \]

  Thus, $\card{\pwim{H}{P'}} \leqslant \kappa$, completing the proof the theorem.
\end{proof}

\section{Questions}\label{section : questions}

At the end of this paper we are left with many open questions, some of which we would like to state explicitly.

Recall that Todorcevic showed in \cite[Chapter 2]{989T0} that $\mathfrak{b} = \aleph_1$ implies $\omega_1 \doesntarrow (\omega_1, \omega + 2)^2$. Together with Theorem \ref{theorem : stick + unbounding => partition} this suggests the following Question:

\begin{question}
Does $\mathfrak{b} = \aleph_1$ imply $\omega_1\omega \doesntarrow (\omega_1\omega, 3)^2$?
\end{question}

Both the statements $\forall n < \omega\left(\omega_1\omega \arrows (\omega_1\omega, n)^2\right)$ and $\forall n < \omega\left(\omega_1\omega^2 \arrows (\omega_1\omega^2, n)^2\right)$ were
shown to follow from $\MA_{\aleph_1}$ in \cite{989B0}. For the time being, we failed to answer the following Question:

\begin{question}
Does $\omega_1\omega \arrows (\omega_1\omega, 3)^2$ imply $\omega_1\omega^2 \arrows (\omega_1\omega^2, 3)^2$?
\end{question}

Regarding weakly scattered orders, the following questions seems to be central.

\begin{question} \label{strong_weakly_scattered_qstn}
  Is it consistent that there is a regular $\kappa$ such that for all weakly $\kappa$-scattered
  linear order types $\varphi$ of size $\kappa$ there is a weakly $\kappa$-scattered linear order type $\tau \geqslant \varphi$ of size
  $\kappa$ such that $\tau\arrows (\tau, 3)^2$?
\end{question}

\begin{question} \label{strong_scattered_qstn}
   Is it consistent that there is a regular $\kappa$ such that for all $\kappa$-scattered
  linear orders $\varphi$ of size $\kappa$ there is a $\kappa$-scattered linear order $\tau \geqslant \varphi$ of size
  $\kappa$ such that $\tau\arrows (\tau, 3)^2$?
\end{question}

The obvious candidate for $\kappa$ here is $\aleph_0$, but note that even the analogous question referring to ordinals
is unanswered. Another obvious question is whether the analogue of Theorem \ref{weakly_scattered_thm}
attained by replacing ``weakly $\kappa$-scattered" by ``$\kappa$-scattered" is consistently true.
This question is of interest both for successor cardinals and inaccessible cardinals.

\begin{question}
  Is it consistent that there is an uncountable, regular cardinal $\kappa$ such that, for every $\kappa$-scattered
  linear order $\varphi$ of size $\kappa$, there is a $\kappa$-scattered linear order $\tau$ of size
  $\kappa$ such that $\tau \arrows (\varphi, n)^2$ for all $n < \omega$?
\end{question}

A question which may be cumbersome to answer, is the following:

\begin{question}
Are all consistent negative partition relations for $\sigma$-scattered orders of cardinality $\aleph_1$ for two col\ou rs implied by the conclusion of Theorem \ref{theorem : second omega_1_negative_scattered}?
\end{question}

Finally, we ask whether Theorem \ref{ks_generalization} is optimal in terms of the
numbers of col\ou rs. In particular, we ask the following.

\begin{question}
Do the axioms of $\ZFC$ imply that there is a $\kappa$-scattered order type $\varphi$ such that, for every $\kappa$-scattered order type $\psi$, $\psi \doesntarrow [\varphi]^{1}_{\kappa, < \kappa}$?
\end{question}

%\nocite{998L0}
\bibliography{thilo}
\bibliographystyle{thilo6}
\end{document}